\documentclass[11pt,amssymb,amsfont,a4paper]{article}
\usepackage{latexsym,amssymb,amsmath,amsthm,color}
\usepackage{geometry}
\usepackage{url}
\usepackage[utf8]{inputenc}
\usepackage{authblk}

\theoremstyle{definition}
\newtheorem{definition}{Definition}
\newtheorem{example}[definition]{Example}

\theoremstyle{plain}
\newtheorem{theorem}{Theorem}
\newtheorem{proposition}[definition]{Proposition}
\newtheorem{lemma}[definition]{Lemma}
\newtheorem{remark}[definition]{Remark}
\newtheorem{corollary}[definition]{Corollary}

\title{Evaluation and interpolation over multivariate skew polynomial rings}
\author[1,2]{Umberto Mart{\'i}nez-Pe\~{n}as \thanks{umberto@ece.utoronto.ca}}
\author[1]{Frank R. Kschischang \thanks{frank@ece.utoronto.ca}}
\affil[1]{Dept.\ of Electrical \& Computer Engineering,
University of Toronto, Canada}
\affil[2]{Dept.\ of Mathematical Sciences, Aalborg University, Denmark}

\date{}

\begin{document}

\maketitle

\begin{abstract}
The concepts of evaluation and interpolation are extended from univariate skew polynomials to multivariate skew polynomials, with coefficients over division rings. Iterated skew polynomial rings are in general not suitable for this purpose. Instead, multivariate skew polynomial rings are constructed in this work as follows: First, free multivariate skew polynomial rings are defined, where multiplication is additive on degrees and restricts to concatenation for monomials. This allows to define the evaluation of any skew polynomial at any point by unique remainder division. Multivariate skew polynomial rings are then defined as the quotient of the free ring by (two-sided) ideals that vanish at every point. The main objectives and results of this work are descriptions of the sets of zeros of these multivariate skew polynomials, the families of functions that such skew polynomials define, and how to perform Lagrange interpolation with them. To obtain these descriptions, the existing concepts of P-closed sets, P-independence, P-bases (which are shown to form a matroid) and skew Vandermonde matrices are extended from the univariate case to the multivariate one.

\textbf{Keywords:} Derivations, free polynomial rings, Lagrange interpolation, Newton interpolation, skew polynomials, Vandermonde matrices. 

\textbf{MSC:} 08B20, 11C08, 12E10. 
\end{abstract}

\section{Introduction} \label{sec intro}

\textit{Univariate skew polynomial rings}, introduced in \cite{ore}, are those ``non-commutative polynomial rings'', over  some coefficient ring, whose addition is the usual one, but whose multiplication is arbitrary with the following restrictions: The $i$th power (being $ i $ a natural number) of the variable $ x $ corresponds to the monomial ``$ x^i $'', and the degree of a product of two arbitrary polynomials is the sum of their degrees. Adding the commutativity property yields the \textit{conventional} polynomial ring, that is, the monoid ring of the natural numbers over the coefficient ring. 

An extension of the concept of \textit{evaluation} to skew polynomials over division rings  was first given in \cite{lam} and further developed in \cite{algebraic-conjugacy, lam-leroy}. Since a skew polynomial ring (over a division ring) is a right-Euclidean domain \cite{ore}, the evaluation of $ F(x) $ on a point $ a $ is defined in \cite{lam,lam-leroy} as the remainder of the Euclidean division of $ F(x) $ by $ x - a $ on the right. Such extension is natural in the sense that it is based on the ``Remainder Theorem'' for conventional polynomials and it is analogous to projecting on a quotient ring defined by a maximal ideal, as in algebraic geometry. This concept of evaluation helps unify the study of Vandermonde, Moore and Wronskian matrices \cite{lam, lam-leroy} and further matrix types (see \cite[p. 604]{linearizedRS} for instance), and gives a natural framework for Hilbert 90 Theorems \cite{hilbert90} and pseudolinear transformations \cite{leroy-pol}, which unify semilinear and differential transformations (see also \cite[Sec. 8.4]{cohn}). It has also provided error-correcting codes with good minimum Hamming distance \cite{skewcyclic1}, maximum rank distance codes \cite{gabidulin}, and maximum sum-rank distance codes \cite{linearizedRS} with finite-field sizes that are not exponential in the code length, in constrast with \cite{gabidulin} (see \cite[Sec. 4.2]{linearizedRS}).

Extending this concept of evaluation to multivariate skew polynomials is not straightforward. In general, unique remainder algorithms \cite[Sec. 4]{lenstra} do not hold for \textit{iterated} skew polynomials since they do not satisfy Jategaonkar's condition \cite{jategaonkar} for $ n > 1 $ variables (see \cite[Prop. 4.7]{lenstra} and also \cite[Sec. 8.8]{cohn}). Recently in \cite{skewRM}, it is proposed to evaluate certain iterated skew polynomials (those forming Poincar{\'e}-Birkhoff-Witt extensions following \cite[Def 2.1]{zhangPBW}) at points $ (a_1, a_2, \ldots, a_n) $ where $ x_1 - a_1 $, $ x_2 - a_2 $, $ \ldots $, $ x_n - a_n $ form a G{\"o}bner-Shirshov basis, since then a unique remainder algorithm exists. However, this does not include all iterated skew polynomial rings or affine points (see \cite[Ex. 3.5]{skewRM}) and the important concepts and results from \cite{lam, algebraic-conjugacy, lam-leroy} do not seem to hold.

In this work, we overcome these issues by considering an alternative construction. We start by defining \textit{free multivariate} skew polynomial rings (using the free monoid with basis $ x_1, x_2, \ldots, x_n $)  following Ore's idea: The product of two monomials consists in appending them, and the degree of a product of two skew polynomials is the sum of their degrees. Over fields, adding commutativity between constants and variables (that is, turning the ring into an algebra) yields the conventional free algebra \cite[Sec. 0.11]{cohn} as a particular case.  Thanks to this definition, we show that we may define the evaluation of any (free) skew polynomial $ F(x_1, x_2, \ldots, x_n) $ at any affine point $ (a_1, a_2, \ldots, a_n) $ as the unique remainder of the Euclidean division of $ F(x_1, x_2, \ldots, x_n) $ by $ x_1 - a_1 $, $ x_2 - a_2 $, $ \ldots $, $ x_n - a_n $ on the right. Once this is done, we may define general (\textit{nonfree}) skew polynomial rings, where evaluation is still natural at every point, as quotients of the free ring by two-sided ideals of skew polynomials that vanish at every point (Definition \ref{def skew polynomial rings}). Reasonably behaved iterated skew polynomial rings are also quotients of the introduced free multivariate skew polynomial rings, and evaluations by unique remainder division as in \cite{skewRM} are recovered by the proposed evaluations (Remark \ref{remark iterated are also quotients}), although the converse does not seem to hold. 

Our main objective is to describe the \textit{functions} obtained by evaluating multivariate skew polynomials over division rings, under some finiteness conditions (see the paragraph below). This problem is closely related to that of interpolation in the sense of Lagrange, which has been studied previously in the univariate case in \cite{eric, lam, lam-leroy, skew-interpolation, zhang}.

Our main results are as follows: We obtain a description of the family of such functions, when defined on a finitely generated set of zeros (\textit{P-closed set}), as a left vector space over the division ring of coefficients, and we find its dimension and a left basis (Theorem \ref{th describing evaluation as vector space}). For this, we first obtain a Lagrange-type interpolation theorem (Theorem \ref{th lagrange interpolation}) on P-closed sets. To this end, we need to extend first the concept of \textit{P-independence} and \textit{P-basis} from \cite[Sec. 4]{algebraic-conjugacy}, which naturally form a matroid (Proposition \ref{prop matroid}). See \cite{oxley} for more details on matroid theory. For that purpose, we need to introduce \textit{ideals of zeros}, whose properties are based on extensions to the multivariate case of tools from \cite{lam, algebraic-conjugacy, lam-leroy}: A multiplication that is additive on degrees (Theorem \ref{th multiplication is additive}), an iterative evaluation on monomials (Theorem \ref{th fundamental functions}) and a product rule (Theorem \ref{th product rule}).

Apart from its own interest, our main motivations to develop this theory come from the theory of error-correcting codes over finite fields, in view of \cite{skewcyclic1, gabidulin, linearizedRS}, as explained above. A definition of skew Reed-Muller codes has been recently proposed \cite{skewRM}, based on evaluating certain iterated skew polynomials at certain points as noted previously. However, the core properties of skew polynomial evaluation codes rely on the matroid given by P-independence and evaluation on P-bases (see \cite{linearizedRS}), which we introduce in the multivariate case in this work. Apart from applications in coding theory, it has been recently shown in \cite{lin-multivariateskew} that Hilbert's Theorem 90 can be naturally stated and proven using the framework of this paper for general Galois extensions of fields (as considered by Noether) using arbitrary generators and relations of the Galois group (note that univariate skew polynomials restrict Hilbert 90 Theorems to a single generator \cite{hilbert90}, as originally stated by Kummer and Hilbert). A differential or more general version of such a Hilbert's Theorem 90 can be similarly put in this framework. Further applications in Galois theory or partial differential equations (such as a study of multivariate Moore or Wronskian matrices) may be possible and of interest. 

The organization is as follows. In Section \ref{sec matrix morphisms}, we show which multiplications are additive on degrees over ``free multivariate polynomial rings'' (Theorem \ref{th multiplication is additive}), extending \cite[Eq. (3), (4) \& (5)]{ore}. In Section \ref{sec evaluations}, we show how to define evaluations as remainders of Euclidean divisions and give a recursive formula for monomials (Theorem \ref{th fundamental functions}), extending \cite[Lemma 2.4]{lam-leroy} and \cite[Eq. (2.3)]{lam-leroy}. In Section \ref{sec product rule}, we show how the product of two skew polynomials is preserved after evaluation (Theorem \ref{th product rule}), extending \cite[Th. 2.7]{lam-leroy}. In Section \ref{sec zeros}, we define P-closed sets and ideals of zeros, and give their basic properties. Using them, we define in Section \ref{sec general skew} \textit{nonfree} multivariate skew polynomial rings (Definition \ref{def skew polynomial rings}). In Section \ref{sec P-bases}, we extend the crucial concepts of \textit{P-independence} and \textit{P-bases} from \cite[Sec. 4]{algebraic-conjugacy} to our context. In Section \ref{sec lagrange interpolation}, we show the existence of Lagrange interpolating skew polynomials (Theorem \ref{th lagrange interpolation}). In Section \ref{sec image and ker}, we obtain the dimension and left bases of the left vector space of skew polynomial functions over a finitely generated P-closed set (Theorem \ref{th describing evaluation as vector space}). In Section \ref{sec skew vandermonde}, we give explicit computational methods to find such dimensions and bases and to perform Lagrange interpolation, via an extension of the Vandermonde matrices considered in \cite{lam, lam-leroy}. The complexity for finding ranks and P-bases is exponential in general, but given a P-basis, the complexity of finding Lagrange interpolating polynomials is polynomial.

\section*{Notation}

Unless otherwise stated, $ \mathbb{F} $ will denote a division ring.
 Assuming $ \mathbb{F} $ to be finite (thus a field \cite{weddeburn}) avoids all other finiteness assumptions.

For positive integers $ m $ and $ n $, $ \mathbb{F}^{m \times n} $ will denote the set of $ m \times n $ matrices over $ \mathbb{F} $, and $ \mathbb{F}^n $ will denote the set of column vectors of length $ n $ over $ \mathbb{F} $. That is, $ \mathbb{F}^n = \mathbb{F}^{n \times 1} $.

On a non-commutative ring $ \mathcal{R} $, we will denote by $ (A) \subseteq \mathcal{R} $ the left ideal generated by a set $ A \subseteq \mathcal{R} $, and on a left vector space $ \mathcal{V} $ over $ \mathbb{F} $, we will denote by $ \langle B \rangle \subseteq \mathcal{V} $ the $ \mathbb{F} $-linear left vector space generated by a set $ B \subseteq \mathcal{V} $. We use the simplified notation $ (F_1, F_2, \ldots, F_n) = (\{ F_1, F_2, \ldots, F_n \}) $ and $ \langle F_1, F_2, \ldots, F_n \rangle = \langle \{ F_1, F_2, \ldots, F_n \} \rangle $.

All rings in this work will be assumed to have multiplicative identity.

\section{Free skew polynomial rings, matrix morphisms and vector derivations} \label{sec matrix morphisms}

In this section, we show which multiplications over a free non-commutative polynomial ring consist in appending monomials and are additive on degrees. See Remark \ref{remark why variables dont commute} to see why we cannot assume that variables commute with each other, unless we are dealing with conventional multivariate polynomials over fields. See Remarks \ref{remark why not iterated skew polynomials} and \ref{remark iterated are also quotients} to see why we do not consider iterated skew polynomial rings.

Fix a positive integer $ n $ from now on, let $ x_1, x_2, \ldots, x_n $ be $ n $ distinct characters, and denote by $ \mathcal{M} $ the set of all finite strings using these characters, that is, the free monoid with basis $ x_1, x_2, \ldots, x_n $ (see \cite[Sec. 6.5]{cohn}). The empy string will be denoted by $ 1 $. A character $ x_i $ will be called a \textit{variable}, an element $ \mathfrak{m} \in \mathcal{M} $ will be called a \textit{monomial}, and we will define its \textit{degree}, denoted by $ \deg(\mathfrak{m}) $, as its length as a string. 

Let $ \mathcal{R} $ be the left vector space over $ \mathbb{F} $ with basis $ \mathcal{M} $. That is, every element $ F \in \mathcal{R} $ can be expressed uniquely as a linear combination (with coefficients on the left)
$$ F = \sum_{\mathfrak{m} \in \mathcal{M}} F_\mathfrak{m} \mathfrak{m}, $$
where $ F_\mathfrak{m} \in \mathbb{F} $, for $ \mathfrak{m} \in \mathcal{M} $, and $ F_\mathfrak{m} = 0 $ except for a finite number of monomials.

An element $ F \in \mathcal{R} $ will be called a \textit{(multivariate) skew polynomial}, and we will define its \textit{degree}, denoted by $ \deg(F) $, as the maximum degree of a monomial $ \mathfrak{m} \in \mathcal{M} $ such that $ F_\mathfrak{m} \neq 0 $, if $ F \neq 0 $. We will define $ \deg(F) = \infty $ if $ F = 0 $.
 
Formally, our objective is to provide $ \mathcal{R} $ with an inner product $ \mathcal{R} \times \mathcal{R} \longrightarrow \mathcal{R} $ that turns it into a non-commutative ring with $ 1 $ as multiplicative identity, restricts to the operation $ \mathcal{M} \times \mathcal{M} \longrightarrow \mathcal{M} $ that consists in appending strings, and where the degree of a product of two skew polynomials is the sum of their degrees. 

First observe that, by identifying $ a \in \mathbb{F} $ with $ a 1 \in \mathcal{R} $, we may assume that $ \mathbb{F} \subseteq \mathcal{R} $, with the elements in $ \mathbb{F} $ called \textit{constants}. Furthermore, $ \mathbb{F} $ is a subring of $ \mathcal{R} $ as long as $ 1 $ is the multiplicative identity. Next, by inspecting constants and variables, we see that we need functions 
$$ \sigma_{i,j} : \mathbb{F} \longrightarrow \mathbb{F}, \quad \textrm{and} \quad \delta_i : \mathbb{F} \longrightarrow \mathbb{F}, $$ 
for $ i,j = 1,2, \ldots, n $, such that
\begin{equation}
x_i a = \sum_{j=1}^n \sigma_{i,j}(a) x_j + \delta_i(a),
\label{eq def inner product}
\end{equation}
for $ i = 1,2, \ldots, n $, and for all $ a \in \mathbb{F} $. This defines two maps
\begin{equation}
\sigma : \mathbb{F} \longrightarrow \mathbb{F}^{n \times n} : a \mapsto \left( \begin{array}{cccc}
\sigma_{1,1}(a) & \sigma_{1,2}(a) & \ldots & \sigma_{1,n}(a) \\
\sigma_{2,1}(a) & \sigma_{2,2}(a) & \ldots & \sigma_{2,n}(a) \\
\vdots & \vdots & \ddots & \vdots \\
\sigma_{n,1}(a) & \sigma_{n,2}(a) & \ldots & \sigma_{n,n}(a) \\
\end{array} \right),
\label{eq def matrix morphism}
\end{equation}
and
\begin{equation}
\delta : \mathbb{F} \longrightarrow \mathbb{F}^n : a \mapsto \left( \begin{array}{c}
\delta_1(a) \\
\delta_2(a) \\
\vdots \\
\delta_n(a)
\end{array} \right).
\label{eq def vector derivation}
\end{equation}
With this more compact notation, we may write Equation (\ref{eq def inner product}) as
\begin{equation}
\mathbf{x} a = \sigma(a) \mathbf{x} + \delta(a),
\label{eq def inner product compact}
\end{equation}
where $ \mathbf{x} $ is a column vector containing $ x_i $ in the $ i $th row, for $ i = 1,2, \ldots, n $. We have the following result, which extends the discussion in the case $ n = 1 $ given at the beginning of \cite{ore}. See also \cite[Th. 10.1]{cohn}.

\begin{theorem} \label{th multiplication is additive}
If an inner product in $ \mathcal{R} $ turns it into a non-commutative ring with multiplicative identity $ 1 $, consists in appending monomials when restricted to $ \mathcal{M} $ and is additive on degrees, then it is given on constants and variables as in (\ref{eq def inner product}), the map $ \sigma : \mathbb{F} \longrightarrow \mathbb{F}^{n \times n} $ in (\ref{eq def matrix morphism}) is a ring morphism, and the map $ \delta : \mathbb{F} \longrightarrow \mathbb{F}^n $ in (\ref{eq def vector derivation}) is additive and satisfies that
\begin{equation}
\delta(ab) = \sigma(a) \delta(b) + \delta(a) b,
\label{eq multiplicative property vector derivations}
\end{equation}
for all $ a,b \in \mathbb{F} $.
 
Conversely, for any two such maps $ \sigma : \mathbb{F} \longrightarrow \mathbb{F}^{n \times n} $ and $ \delta : \mathbb{F} \longrightarrow \mathbb{F}^n $, there exists a unique inner product in $ \mathcal{R} $ satisfying the properties in the previous paragraph. Furthermore, two such inner products are equal if, and only if, the corresponding maps are equal.
\end{theorem}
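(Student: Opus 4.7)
The plan is to handle the two directions separately. For the forward direction, I start from the hypothesis that the multiplication is additive on degrees. Since $x_i$ has degree $1$ and $a\in\mathbb{F}$ has degree $0$, the product $x_i a$ lies in the $\mathbb{F}$-span of $\{x_1,\ldots,x_n,1\}$; this span is a direct sum, so the coefficients in (\ref{eq def inner product}) exist and are uniquely determined, defining $\sigma_{i,j}$ and $\delta_i$. Additivity of $\sigma$ and $\delta$ will follow by equating coefficients in $x_i(a+b)=x_ia+x_ib$, and the identities $\sigma(1)=I_n$, $\delta(1)=0$ follow from $x_i\cdot 1=x_i$. The morphism property $\sigma(ab)=\sigma(a)\sigma(b)$ and the twisted Leibniz rule (\ref{eq multiplicative property vector derivations}) will come from a single associativity computation: expanding $x_i(ab)=(x_ia)b$ using (\ref{eq def inner product compact}) gives
\[
\sigma(ab)\mathbf{x}+\delta(ab)=\sigma(a)(\mathbf{x}b)+\delta(a)b=\sigma(a)\sigma(b)\mathbf{x}+\sigma(a)\delta(b)+\delta(a)b,
\]
and separating the degree-$1$ and degree-$0$ parts on the basis $\{x_1,\ldots,x_n,1\}$ yields both identities simultaneously.

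For the converse direction, given $\sigma$ and $\delta$ with the stated algebraic properties, I construct the inner product by recursion on degree. First I declare $\mathfrak{m}\cdot\mathfrak{n}$ to be the concatenation of monomials, then extend to $\mathfrak{m}\cdot a$ for $a\in\mathbb{F}$ by writing $\mathfrak{m}=x_{i_1}\mathfrak{m}'$ and defining $\mathfrak{m}\cdot a=x_{i_1}(\mathfrak{m}'\cdot a)$ after replacing $x_i(\text{constant})$ by (\ref{eq def inner product}); this terminates because $\deg(\mathfrak{m}')<\deg(\mathfrak{m})$. Finally I extend to arbitrary $F,G\in\mathcal{R}$ by $\mathbb{F}$-bilinearity using the basis $\mathcal{M}$. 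The restriction to monomials is concatenation by construction, and the degree-additivity follows because the only top-degree monomial appearing in $\mathfrak{m}\cdot\mathfrak{n}$ is $\mathfrak{m}\mathfrak{n}$ itself.

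The one nontrivial point, and the step I expect to carry essentially all of the work, is verifying associativity $(FG)H=F(GH)$. By bilinearity this reduces to the case where $F,G,H$ are each either a monomial or a constant; by the recursive definition it further reduces to the atomic identity
\[
(x_i\cdot a)\cdot b=x_i\cdot(a\cdot b) \quad \text{for } a,b\in\mathbb{F},
\]
together with $x_i\cdot(x_j\cdot F)=(x_i x_j)\cdot F$, which is automatic from the recursion. Substituting (\ref{eq def inner product compact}) on both sides of the atomic identity and expanding, the left side becomes $\sigma(a)\sigma(b)\mathbf{x}+\sigma(a)\delta(b)+\delta(a)b$ while the right side becomes $\sigma(ab)\mathbf{x}+\delta(ab)$, and equality is exactly the content of the hypotheses that $\sigma$ is a ring morphism and $\delta$ satisfies (\ref{eq multiplicative property vector derivations}). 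The unit axiom $1\cdot F=F=F\cdot 1$ follows from $\sigma(1)=I_n$ and $\delta(1)=0$.

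Finally, uniqueness of the inner product given $(\sigma,\delta)$, and the final claim that equality of inner products corresponds to equality of $(\sigma,\delta)$, are both immediate: any bilinear multiplication satisfying the hypotheses is forced to agree with the constructed one on pairs $(x_i,a)$ by (\ref{eq def inner product}) and on pairs $(\mathfrak{m},\mathfrak{n})$ by concatenation, hence everywhere by bilinearity; and conversely the maps $\sigma,\delta$ can be read off from the products $x_i\cdot a$, so different pairs give different inner products.
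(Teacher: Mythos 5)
Your argument follows essentially the same route as the paper: deduce the form of $x_i a$ from degree-additivity, read off additivity and the morphism/Leibniz identities by comparing the degree-$1$ and degree-$0$ parts of $x_i(ab)=(x_i a)b$, and for the converse build the product recursively by pushing constants through monomials, leaving the bulk of the associativity verification implicit. Your extra step of explicitly checking the atomic identity $(x_i a)b = x_i(ab)$ is a welcome addition that the paper omits.

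One point worth correcting: you invoke ``$\mathbb{F}$-bilinearity'' both when extending the product from monomials to general skew polynomials and when reducing associativity to atomic cases. The multiplication being constructed is left $\mathbb{F}$-linear in its first argument but only \emph{additive} in its second; as the paper itself notes after Definition~\ref{def matrix mor and vector der}, $\mathbb{F}[\mathbf{x};\sigma,\delta]$ is an $\mathbb{F}$-algebra (i.e., has $\mathbb{F}$-bilinear product) only in the degenerate case $\sigma = \mathrm{Id}$, $\delta = 0$. Indeed, literal right $\mathbb{F}$-linearity would force $x_1\cdot a = a\,x_1$, contradicting (\ref{eq def inner product}) in general. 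What your construction actually uses is additivity in each slot together with left $\mathbb{F}$-linearity in the first argument and the recursive rule $\mathfrak{m}\cdot(a\mathfrak{n}) = (\mathfrak{m}\cdot a)\mathfrak{n}$ for the second; stated this way the extension and the reduction go through, and the argument matches the paper's. Similarly, the reduction of $(FG)H = F(GH)$ to the atomic identity deserves a short induction on $\deg(\mathfrak{m})$ to show $(\mathfrak{m}\cdot a)\cdot b = \mathfrak{m}\cdot(ab)$ before passing to general products — you gesture at this, and the paper leaves it to the reader as well, so this is not a gap so much as a point to spell out if you write it up.
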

\begin{proof}
First assume that a given inner product in $ \mathcal{R} $ satisfies the properties given  in the first paragraph. The additive properties of $ \sigma $ and $ \delta $ then follow from
$$ x_i(a+b) = (x_ia) + (x_ib), $$
for all $ a,b \in \mathbb{F} $ and all $ i = 1,2, \ldots, n $, their multiplicative properties follow from
$$ x_i(ab) = (x_ia)b, $$
for all $ a,b \in \mathbb{F} $ and all $ i = 1,2, \ldots, n $, and $ \sigma(1) = I $ follows from $ x_i 1 = 1 x_i $ (since $ 1 $ is a multiplicative identity) for all $ i = 1,2, \ldots, n $.

Next, the uniqueness and equality properties in the second paragraph are straightforward using Equations (\ref{eq def inner product}) or (\ref{eq def inner product compact}). 

Finally, given a ring morphism $ \sigma : \mathbb{F} \longrightarrow \mathbb{F}^{n \times n} $ and an additive map $ \delta : \mathbb{F} \longrightarrow \mathbb{F}^n $ satisfying (\ref{eq multiplicative property vector derivations}), we may define the desired inner product in $ \mathcal{R} $ as follows.

First, constants in $ \mathbb{F} $ act on the left as scalars ($ (a 1) F = a F $, for all $ F \in \mathcal{R} $). Now given $ \mathfrak{m}, \mathfrak{n} \in \mathcal{M} $, we define recursively on $ \mathfrak{m} $ the products
$$ (\mathfrak{m} x_i)(a \mathfrak{n}) = \sum_{j=1}^n \mathfrak{m} (\sigma_{i,j}(a) (x_j \mathfrak{n})) + \mathfrak{m}(\delta_i(a) \mathfrak{n}), $$
for all $ i = 1,2, \ldots, n $ and all $ a \in \mathbb{F} $, where $ \mathfrak{m} x_i $ and $ x_j \mathfrak{n} $ denote appending of monomials. Observe that this already defines, recursively on $ \mathfrak{m} $, the products of monomials as appending them, by choosing $ a = 1 $.

Finally, given general skew polynomials $ F = \sum_{\mathfrak{m} \in \mathcal{M}} F_\mathfrak{m} \mathfrak{m} $ and $ G = \sum_{\mathfrak{n} \in \mathcal{M}} G_\mathfrak{n} \mathfrak{n} $, where $ F_\mathfrak{m}, G_\mathfrak{m} \in \mathcal{R} $, for all $ \mathfrak{m} \in \mathcal{M} $, we define
$$ FG = \sum_{\mathfrak{m} \in \mathcal{M}} \sum_{\mathfrak{n} \in \mathcal{M}} F_\mathfrak{m} \left( \mathfrak{m} \left( G_\mathfrak{n} \mathfrak{n} \right) \right). $$

Note that this product is well-defined, since $ \deg(F) = d $ and $ \deg(G) = e $ imply that the coefficient of $ \mathfrak{m} $ in $ FG $ is zero whenever $ \deg(\mathfrak{m}) > d+e $, for all $ F, G \in \mathcal{R} $ and all $ \mathfrak{m} \in \mathcal{M} $. The properties of such an inner product stated in the theorem are all trivial, except for associativity, whose verification is left to the reader. 
\end{proof}

This motivates the following definitions:

\begin{definition}[\textbf{Matrix morphisms and vector derivations}] \label{def matrix mor and vector der}
We call every ring morphism $ \sigma : \mathbb{F} \longrightarrow \mathbb{F}^{n \times n} $ a matrix morphism (over $ \mathbb{F} $), and we say that a map $ \delta : \mathbb{F} \longrightarrow \mathbb{F}^n $ is a $ \sigma $-vector derivation (over $ \mathbb{F} $) if it is additive and satisfies 
$$ \delta(ab) = \sigma(a) \delta(b) + \delta(a)b, $$
for all $ a,b \in \mathbb{F} $.
\end{definition}

\begin{definition}[\textbf{Free multivariate skew polynomial rings}]
Given a matrix morphism $ \sigma : \mathbb{F} \longrightarrow \mathbb{F}^{n \times n} $ and a $ \sigma $-vector derivation $ \delta : \mathbb{F} \longrightarrow \mathbb{F}^n $, we define the free (multivariate) skew polynomial ring corresponding to $ \sigma $ and $ \delta $ as the unique ring $ \mathcal{R} = \mathbb{F}[\mathbf{x}; \sigma, \delta] $ with the inner product given by (\ref{eq def inner product}).
\end{definition}
 
Observe that the conventional free multivariate polynomial ring (called free algebra over $ \mathbb{F} $ when $ \mathbb{F} $ is commutative, see \cite[Sec. 0.11]{cohn} and \cite[Sec. 6.5]{cohn}) on the variables $ x_1, x_2, \ldots, $ $ x_n $ is obtained by choosing $ \sigma = {\rm Id} $ and $ \delta = 0 $, where we define $ {\rm Id}(a) = aI $, for all $ a \in \mathbb{F} $. Moreover, observe that this is the only case where constants and variables commute, which coincides with the only case where $ \mathbb{F}[\mathbf{x}; \sigma, \delta] $ is an algebra over $ \mathbb{F} $ when $ \mathbb{F} $ is commutative (here by \textit{algebra} we mean a ring $ \mathcal{R} $ that is a vector space over $ \mathbb{F} $ and whose inner product is $ \mathbb{F} $-bilinear, as in \cite{cohn}). Finally, observe also that $ \mathbb{F}[\mathbf{x}; \sigma, \delta] $ can still be characterized by a universal property similar to that of the free algebra. We only need to replace in the universal property the commutativity of constants and variables on free algebras by the rule (\ref{eq def inner product}). We leave the details to the reader. 

We conclude the section with some particular instances of matrix morphisms and vector derivations of interest:

\begin{example} \label{example diagonal case}
A matrix morphism $ \sigma : \mathbb{F} \longrightarrow \mathbb{F}^{n \times n} $ satisfies $ \sigma_{i,j}(a) = 0 $, for all $ a \in \mathbb{F} $ and all $ i \neq j $ if, and only if, there exist ring endomorphisms $ \sigma_i : \mathbb{F} \longrightarrow \mathbb{F} $, for $ i = 1,2, \ldots, n $, such that
$$ \sigma(a) = \left( \begin{array}{cccc}
\sigma_1(a) & 0 & \ldots & 0 \\
0 & \sigma_2(a) & \ldots & 0 \\
\vdots & \vdots & \ddots & \vdots \\
0 & 0 & \ldots & \sigma_n(a) \\
\end{array} \right), $$
for all $ a \in \mathbb{F} $. It is trivial to check that the family of $ \sigma $-vector derivations in this case are precisely those such that $ \delta_i $ is a $ \sigma_i $-derivation, for $ i = 1,2, \ldots, n $. An example is $ \mathbb{F} = k(t_1, t_2, \ldots, t_n) $, where $ k $ is a field, $ t_1, t_2, \ldots, t_n $ are algebraically independent variables, $ \sigma_i = {\rm Id} $ and $ \delta_i = \frac{\partial}{\partial t_i} $ is the conventional $ i $th partial derivative, for $ i = 1,2, \ldots, n $.
\end{example}

\begin{example} \label{example vector derivations}
Let $ \sigma : \mathbb{F} \longrightarrow \mathbb{F}^{n \times n} $ be a matrix morphism, and let $ \boldsymbol\beta \in \mathbb{F}^n $. The map $ \delta : \mathbb{F} \longrightarrow \mathbb{F}^n $ defined by
$$ \delta(a) = \sigma(a) \boldsymbol\beta - \boldsymbol\beta a, $$
for all $ a \in \mathbb{F} $, is a $ \sigma $-vector derivation. When $ n = 1 $, these vector derivations are called \textit{inner derivations} in the literature. 
\end{example}

\section{Evaluations of multivariate skew polynomials} \label{sec evaluations}

In this section, we show how to define \textit{evaluation maps} $ E_{\mathbf{a}} : \mathbb{F}[\mathbf{x}; \sigma, \delta] \longrightarrow \mathbb{F} $, for all $ \mathbf{a} \in \mathbb{F}^n $, that can be considered natural or standard. We will first require that these maps are left linear forms over $ \mathbb{F} $. We may then define the \textit{total evaluation map} as 
\begin{equation}
E : \mathbb{F}[\mathbf{x}; \sigma, \delta] \longrightarrow \mathbb{F}^{\mathbb{F}^n}: F \mapsto \left( E_{\mathbf{a}}(F) \right) _{\mathbf{a} \in \mathbb{F}^n},
\label{eq def total evaluation map}
\end{equation}
which is again left linear. By linearity, we have that
$$ E_{\mathbf{a}} \left( \sum_{\mathfrak{m} \in \mathcal{M}} F_\mathfrak{m} \mathfrak{m} \right) = \sum_{\mathfrak{m} \in \mathcal{M}} F_\mathfrak{m} N_\mathfrak{m}(\mathbf{a}), $$
for all $ \mathbf{a} \in \mathbb{F}^n $, all $ F_\mathfrak{m} \in \mathbb{F} $, and for functions 
$$ N_\mathfrak{m} : \mathbb{F}^n \longrightarrow \mathbb{F} : \mathbf{a} \longrightarrow E_\mathbf{a}(\mathfrak{m}) , $$
where $ \mathfrak{m} \in \mathcal{M} $. Therefore, giving a total evaluation map $ E $ is equivalent to giving the family of functions $ (N_\mathfrak{m})_{\mathfrak{m} \in \mathcal{M}} $, thus these will be called \textit{fundamental functions} of the evaluation $ E $. When $ n=1 $, the fundamental functions $ N_i = N_{x^i} $, for $ i = 0,1,2, \ldots $, coincide with those in \cite{lam, lam-leroy}.

As stated in Section \ref{sec intro}, a standard way of understanding evaluations of multivariate conventional polynomials is by giving a canonical ring isomorphism
$$ \mathbb{F} [x_1, x_2, \ldots, x_n] / \left( x_1-a_1, x_2-a_2, \ldots, x_n - a_n \right) \longrightarrow \mathbb{F}, $$
for all $ a_1, a_2, \ldots, a_n \in \mathbb{F} $, due to the ``Remainder Theorem''. The same idea is used in classical algebraic geometry to define evaluations as projections to a quotient ring given by a maximal ideal, which would be isomorphic to the so-called residue field.

To obtain such an isomorphism, we give a Euclidean-type division for skew polynomials of the type $ x_1-a_1, x_2-a_2, \ldots, x_n-a_n $:

\begin{lemma} \label{th euclidean division}
For any $ a_1, a_2, \ldots , a_n \in \mathbb{F} $ and any $ F \in \mathbb{F}[\mathbf{x}; \sigma, \delta] $, there exist unique $ G_1, G_2, \ldots, $ $ G_n \in \mathbb{F}[\mathbf{x}; \sigma, \delta] $ and $ b \in \mathbb{F} $ such that
\begin{equation}
F = \sum_{i = 1}^n G_i (x_i - a_i) + b.
\label{eq euclidean division}
\end{equation}
\end{lemma}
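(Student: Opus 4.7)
The plan is to establish existence by strong induction on $\deg(F)$ and uniqueness by inspecting the top-degree part of an alleged difference of two representations. The key structural observation on $\mathcal{M}$ is that every nonempty monomial $\mathfrak{n} \in \mathcal{M}$ factors \emph{uniquely} as $\mathfrak{n} = \mathfrak{m}' x_i$, where the index $i$ is determined by the last character of $\mathfrak{n}$; this lets us split $F$ cleanly according to the last variable.

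For existence, the base case $\deg(F) \leq 0$ is trivial: take $G_i = 0$ and $b = F$. If $\deg(F) \geq 1$, grouping the non-constant monomials of $F$ according to their last character gives a unique expression $F = \sum_{i=1}^n H_i x_i + F_1$ with $H_i \in \mathbb{F}[\mathbf{x}; \sigma, \delta]$ satisfying $\deg(H_i) \leq \deg(F) - 1$ and $F_1 \in \mathbb{F}$. Substituting $H_i x_i = H_i(x_i - a_i) + H_i a_i$ yields $F = \sum_{i=1}^n H_i (x_i - a_i) + F'$ with $F' := F_1 + \sum_i H_i a_i$. Since $a_i \in \mathbb{F}$ has degree at most $0$ and multiplication is additive on degrees (Theorem \ref{th multiplication is additive}), we have $\deg(F') \leq \deg(F) - 1$, and the induction hypothesis applied to $F'$ produces the required decomposition.

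For uniqueness, by subtracting two purported representations it suffices to show that $\sum_{i=1}^n H_i(x_i - a_i) + c = 0$ with $c \in \mathbb{F}$ forces every $H_i$ and $c$ to vanish. Assume for contradiction that some $H_i \neq 0$ and set $d := \max_i \deg(H_i) \geq 0$. Writing $H_i = \sum_\mathfrak{m} H_{i,\mathfrak{m}} \mathfrak{m}$, the construction of the product in Theorem \ref{th multiplication is additive} gives $H_i x_i = \sum_\mathfrak{m} H_{i,\mathfrak{m}} (\mathfrak{m} x_i)$, where $\mathfrak{m} x_i$ denotes concatenation of monomials. Meanwhile $\deg(H_i a_i) \leq d$ and $\deg(c) \leq 0$, so the degree-$(d+1)$ component of $\sum_i H_i(x_i - a_i) + c$ equals $\sum_{i=1}^n \sum_{\deg(\mathfrak{m}) = d} H_{i,\mathfrak{m}} (\mathfrak{m} x_i)$. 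By the unique factorization of monomials noted above, the set $\{\mathfrak{m} x_i : \deg(\mathfrak{m}) = d,\; 1 \leq i \leq n\}$ consists of pairwise distinct elements of $\mathcal{M}$ and is therefore linearly independent in $\mathcal{R}$; the vanishing of this component forces $H_{i,\mathfrak{m}} = 0$ for all $i$ and every $\mathfrak{m}$ of degree $d$, contradicting the definition of $d$. Hence all $H_i = 0$, and substituting back yields $c = 0$.

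The main subtlety is the interplay between the twisted multiplication (dictated by $\sigma$ and $\delta$) and the degree bookkeeping. Once we invoke additivity of degrees and the injectivity of the ``append $x_i$'' operation on monomials, both the existence recursion and the leading-term uniqueness argument go through cleanly, with no interference from the twisting data $\sigma$, $\delta$ beyond what Theorem \ref{th multiplication is additive} already provides.
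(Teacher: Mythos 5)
Your proof is correct and takes essentially the same approach as the paper's: both isolate the top-degree part of the supposed dependency and use the unique right-factorization $\mathfrak{n} = \mathfrak{m}' x_i$ of a nonempty word in the free monoid to conclude that the appended monomials $\mathfrak{m} x_i$ cannot cancel one another. The paper encapsulates this in a graded lexicographic ordering and a single leading monomial (after relabeling so that $G_n$ has maximal degree), whereas you examine the full degree-$(d+1)$ slice directly, and you also spell out the existence recursion that the paper dispatches as ``a Euclidean division algorithm as usual''; both presentations are equally valid.
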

\begin{proof}
Existence is proven by a Euclidean division algorithm as usual. We next prove the uniqueness property. We only need to prove that if
\begin{equation}
\sum_{i = 1}^n G_i (x_i - a_i) + b = 0,
\label{eq euclidean division proof}
\end{equation}
then $ G_1 = G_2 = \ldots = G_n = b = 0 $. Assume the opposite. Without loss of generality, we may assume that $ G_n \neq 0 $ and $ \deg(G_n) \geq \deg(G_i) $, for all $ i $ with $ G_i \neq 0 $. 

Let $ \prec $ denote the graded lexicographic (from right to left) ordering in $ \mathcal{M} $ with $ x_1 \prec x_2 \prec \ldots \prec x_n $, and denote by $ {\rm LM}(G) \in \mathcal{M} $ the leading monomial of a skew polynomial $ G \in \mathbb{F}[\mathbf{x}; \sigma, \delta] $ with respect to $ \prec $. Then we see that the monomial $ {\rm LM}(G_n (x_n - a_n)) = {\rm LM}(G_n) x_n $ cannot be cancelled by any other monomial on the left-hand side of (\ref{eq euclidean division proof}). This is absurd and thus $ G_i = 0 $, for all $ i = 1,2, \ldots, n $. Hence $ b = 0 $ and we are done.
\end{proof}

\begin{remark}
Observe that the facts that the product in $ \mathbb{F}[\mathbf{x}; \sigma, \delta] $ consists in appending monomials and is additive on degrees are crucial in the proof of the previous lemma, since they allow us to state that $ {\rm LM}(G_n (x_n - a_n)) = {\rm LM}(G_n) x_n $ for the graded lexicographic ordering. These properties also ensure that the division algorithm does not run indefinitely. Note moreover that $ \mathbb{F} $ can be an arbitrary ring, since the leading coefficients of $ x_1-a_1, x_2-a_2, \ldots, x_n-a_n $ are all $ 1 $.
\end{remark}

\begin{remark} \label{remark why variables dont commute}
Observe that (being $ \mathbb{F} $ a division ring) we cannot guarantee that Lemma \ref{th euclidean division} (uniqueness of remainders) holds if we allow the variables to commute, unless we are dealing with multivariate conventional polynomials over fields.

Assume that $ n > 1 $ and add to the ring $ \mathcal{R} $ in Section \ref{sec matrix morphisms} the commutativity property on the variables: $ x_i x_j = x_j x_i $, for all $ i,j = 1,2, \ldots, n $. Observe that the rest of the properties of $ \mathcal{R} $ still imply the existence of the matrix morphism $ \sigma : \mathbb{F} \longrightarrow \mathbb{F}^{n \times n} $ and the $ \sigma $-vector derivation $ \delta : \mathbb{F} \longrightarrow \mathbb{F}^n $ by inspecting constants and variables.

Next take $ a_1, a_2, \ldots, a_n \in \mathbb{F} $. For fixed $ 1 \leq i < j \leq n $, we have that
$$ x_j (x_i - a_i) - x_i (x_j - a_j) = x_i a_j - x_j a_i = \sum_{k=1}^n \left( \sigma_{i,k}(a_j) - \sigma_{j,k}(a_i) \right) (x_k - a_k) $$
\begin{equation}
+ \left( \sum_{k=1}^n \left( \sigma_{i,k}(a_j) - \sigma_{j,k}(a_i) \right) a_k \right) + \delta_i(a_j) - \delta_j(a_i).
\label{eq term for noncommutative}
\end{equation}

Then the term (\ref{eq term for noncommutative}) equals $ 0 $ for all $ a_1, a_2, \ldots, a_n \in \mathbb{F} $ and all $ 0 \leq i < j \leq n $ if, and only if, $ \mathbb{F} $ is commutative, $ \sigma = {\rm Id} $ and $ \delta = 0 $. We leave the proof to the reader. 

In particular, unique remainder as in Lemma \ref{th euclidean division} can only be guaranteed in this case (variables commute and $ \mathbb{F} $ is a division ring) if $ \mathbb{F} $ is commutative, $ \sigma = {\rm Id} $ and $ \delta = 0 $. Therefore, evaluation by unique remainder division (thus ``plug-in'' evaluation, see Remark \ref{remark pluging non-commutative}) does not exist even for multivariate conventional polynomials with commutative variables over non-commutative division rings (as considered in \cite{amitsur}, for instance). 

However, one may usually define non-trivial relations between variables while preserving evaluation properties. See Section \ref{sec general skew}.
\end{remark}

\begin{remark} \label{remark why not iterated skew polynomials}
Since variables commute in many reasonable iterated skew polynomial rings, they do not satisfy the uniqueness of remainders as in Lemma \ref{th euclidean division}. Thus we do not consider iterated skew polynomials in this paper, in contrast with \cite{skewRM}. Take for instance any iterated skew polynomial ring $ \mathcal{S} = (\mathbb{F}[x_1;\sigma_1,\delta_1])[x_2;\sigma_2,\delta_2] $, where $ \delta_1 = \delta_2 = 0 $, $ \sigma_2(x_1) = x_1 $ and $ \sigma_1 \sigma_2 = \sigma_2 \sigma_1 $. Then $ x_2 x_1 = x_1 x_2 $. This can be easily extended to any number of variables. See also Remark \ref{remark iterated are also quotients}.
\end{remark}

We may now define a standard evaluation as follows, which extends the case $ n = 1 $ from \cite{lam, lam-leroy}:

\begin{definition}[\textbf{Standard evaluation}] \label{def standard evaluation}
For $ \mathbf{a} = (a_1, a_2, \ldots, a_n) \in \mathbb{F}^n $ and a skew polynomial $ F \in \mathbb{F} [\mathbf{x} ; \sigma,\delta] $, we define its $ (\sigma,\delta) $-evaluation, denoted by
\begin{equation}
F(\mathbf{a}) = E_{\mathbf{a}}^{\sigma, \delta}(F),
\label{eq def standard evaluation}
\end{equation}
as the unique element $ F(\mathbf{a}) \in \mathbb{F} $ such that 
$$ F - F(\mathbf{a}) \in \left( x_1-a_1, x_2 - a_2, \ldots, x_n - a_n \right). $$
We denote the corresponding total evaluation map by $ E^{\sigma, \delta} $, and we use the notations $ E_{\mathbf{a}} $ and $ E $ when there is no confusion about $ \sigma $ and $ \delta $.
\end{definition}

These evaluation maps are well-defined and left linear by Lemma \ref{th euclidean division}. To conclude, we give a recursive formula on the fundamental functions of the total evaluation map $ E^{\sigma, \delta} $, which is of computational interest. This result is an extension of the case $ n = 1 $ given in \cite[Lemma 2.4]{lam-leroy} and \cite[Eq. (2.3)]{lam-leroy}.

\begin{theorem} \label{th fundamental functions}
The fundamental functions $ N_\mathfrak{m}^{\sigma, \delta} = N_\mathfrak{m} : \mathbb{F}^n \longrightarrow \mathbb{F} $, for $ \mathfrak{m} \in \mathcal{M} $, of the $ (\sigma,\delta) $-evaluation $ E^{\sigma, \delta} $ in Definition \ref{def standard evaluation} are given recursively as follows: $ N_1(\mathbf{a}) = 1 $, and
\begin{equation}
\left( \begin{array}{c}
N_{x_1 \mathfrak{m}}(\mathbf{a}) \\
N_{x_2 \mathfrak{m}}(\mathbf{a}) \\
\vdots \\
N_{x_n \mathfrak{m}}(\mathbf{a}) \\
\end{array} \right) = \sigma(N_\mathfrak{m}(\mathbf{a})) \mathbf{a} + \delta (N_\mathfrak{m}(\mathbf{a})),
\label{eq fundamental functions standard ev}
\end{equation}
for all $ \mathfrak{m} \in \mathcal{M} $ and all $ \mathbf{a} \in \mathbb{F}^n $.
\end{theorem}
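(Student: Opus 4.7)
The plan is to derive the recursion directly from the Euclidean division characterization of $N_\mathfrak{m}(\mathbf{a}) = E_\mathbf{a}(\mathfrak{m})$ established in Lemma~\ref{th euclidean division} and Definition~\ref{def standard evaluation}, using the commutation rule (\ref{eq def inner product}) to push a variable $x_i$ past the scalar remainder. The base case $N_1(\mathbf{a})=1$ is immediate: the constant skew polynomial $1$ is its own remainder on division by $x_1-a_1,\ldots,x_n-a_n$, since it already has the form $\sum_i 0\cdot(x_i-a_i)+1$.

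For the inductive step, I would fix $\mathfrak{m}\in\mathcal{M}$, $\mathbf{a}\in\mathbb{F}^n$, and set $b:=N_\mathfrak{m}(\mathbf{a})\in\mathbb{F}$. By definition there exist $G_1,\ldots,G_n\in\mathbb{F}[\mathbf{x};\sigma,\delta]$ with
\[
\mathfrak{m}=\sum_{j=1}^n G_j(x_j-a_j)+b.
\]
Multiplying on the left by $x_i$ and using the commutation rule (\ref{eq def inner product}) to rewrite $x_i b$ yields
\[
x_i\mathfrak{m}=\sum_{j=1}^n x_iG_j(x_j-a_j)+\sum_{j=1}^n\sigma_{i,j}(b)x_j+\delta_i(b).
\]
The crucial algebraic move is to split each $\sigma_{i,j}(b)x_j$ as $\sigma_{i,j}(b)(x_j-a_j)+\sigma_{i,j}(b)a_j$, absorbing the first piece into the left ideal $(x_1-a_1,\ldots,x_n-a_n)$. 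This puts $x_i\mathfrak{m}$ into the form of (\ref{eq euclidean division}) with remainder
\[
\sum_{j=1}^n\sigma_{i,j}(b)a_j+\delta_i(b),
\]
which is exactly the $i$th component of $\sigma(b)\mathbf{a}+\delta(b)$.

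By the uniqueness part of Lemma~\ref{th euclidean division}, this remainder is $E_\mathbf{a}(x_i\mathfrak{m})=N_{x_i\mathfrak{m}}(\mathbf{a})$, proving (\ref{eq fundamental functions standard ev}) coordinate by coordinate. No difficulty is expected beyond keeping the bookkeeping straight: the only step that is not purely formal is recognizing that $\sigma_{i,j}(b)(x_j-a_j)$ sits inside the left ideal, which is immediate since the ideal is a left ideal. Thus the result follows by a short direct argument rather than an induction on the length of $\mathfrak{m}$; the recursion is simply expressing how appending a variable on the left interacts with passing to the remainder.
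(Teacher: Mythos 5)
Your proof is correct and takes essentially the same approach as the paper's: start from the Euclidean division of $\mathfrak{m}$, left-multiply by $x_i$, push $x_i$ past the scalar remainder via the commutation rule (\ref{eq def inner product}), absorb the resulting $\sigma_{i,j}(b)(x_j-a_j)$ terms into the left ideal, and invoke uniqueness from Lemma~\ref{th euclidean division}. The only cosmetic difference is that the paper carries out the same manipulation in the compact vector notation $\mathbf{x}\mathfrak{m}=\mathbf{P}+\sigma(N_\mathfrak{m}(\mathbf{a}))(\mathbf{x}-\mathbf{a})+\sigma(N_\mathfrak{m}(\mathbf{a}))\mathbf{a}+\delta(N_\mathfrak{m}(\mathbf{a}))$, whereas you work coordinate-by-coordinate.
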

\begin{proof}
We will use the compact matrix/vector notation in (\ref{eq def inner product compact}), and we proceed recursively on $ \mathfrak{m} \in \mathcal{M} $, for fixed $ \mathbf{a} \in \mathbb{F}^n $. 

Obviously, $ N_1(\mathbf{a}) = 1 $. Assume now that it is true for a monomial $ \mathfrak{m} \in \mathcal{M} $. Therefore, there exist skew polynomials $ P_1, P_2, \ldots, P_n \in \left( x_1-a_1, x_2 - a_2, \ldots, x_n - a_n \right) $ such that, if we denote by $ \mathbf{P} $ the column vector whose $ i $th row is $ P_i $, for $ i = 1,2, \ldots, n $, then
$$ \mathbf{x} \mathfrak{m} = \mathbf{P} + \mathbf{x} N_\mathfrak{m}(\mathbf{a}) = \mathbf{P} + \sigma (N_\mathfrak{m}(\mathbf{a})) \mathbf{x} + \delta(N_\mathfrak{m}(\mathbf{a})) $$
$$ = \mathbf{P} + \sigma(N_\mathfrak{m}(\mathbf{a})) (\mathbf{x} - \mathbf{a}) + (\sigma(N_\mathfrak{m}(\mathbf{a})) \mathbf{a} + \delta(N_\mathfrak{m}(\mathbf{a}))), $$
and the result follows by Lemma \ref{th euclidean division}.
\end{proof}

\begin{remark} \label{remark pluging non-commutative}
Note that, when $ \sigma = {\rm Id} $ and $ \delta = 0 $, Theorem \ref{th fundamental functions} states that evaluation by unique remainder coincides with evaluation performed by ``plugging values'' in the variables but with reversed orders (see also Remark \ref{remark why variables dont commute}). For instance, the evaluation of $ x_1x_2 $ at $ (a_1,a_2) $ would be $ a_2a_1 $.
\end{remark}

We recall that in the case $ n=1 $ and $ \delta = 0 $, we have that $ N_i(a) = N_{x^i}(a) = \sigma^{i-1}(a) \cdots \sigma(a) a $, for $ i = 1,2, \ldots $, hence the notation $ N_\mathfrak{m} $ is a reminder of its similarity with the \textit{norm} function.

It has been recently shown \cite{lin-multivariateskew} that norms as in (\ref{eq fundamental functions standard ev}) allow to naturally state Hilbert's Theorem 90 for general Galois extensions of fields (as considered by Noether) using arbitrary generators and relations of the Galois group.

\section{Conjugacy and the product rule} \label{sec product rule}

From the previous section, we know that the $ (\sigma, \delta) $-evaluation $ E^{\sigma, \delta} $ is left linear. In this section, we will use the multiplicative properties of $ \sigma $, $ \delta $ and the fundamental functions of $ E^{\sigma, \delta} $ to show that it preserves products of skew polynomials in a certain way. This property will be used in the next section to define ideals of zeros and to characterize which of them are two-sided (Proposition \ref{prop two-sided ideals}). It will be especially important in Section \ref{sec lagrange interpolation} for constructing skew polynomials of restricted degree with a given set of zeros.

We need the concept of conjugacy, where the case $ n = 1 $ was given in \cite[Eq. (2.5)]{lam-leroy}.

\begin{definition}[\textbf{Conjugacy}]
Given $ \mathbf{a} \in \mathbb{F}^n $ and $ c \in \mathbb{F}^* $, we define the $ (\sigma,\delta) $-conjugate, or just conjugate if there is no confusion, of $ \mathbf{a} $ with respect to $ c $ as
\begin{equation}
\mathbf{a}^c = \sigma(c) \mathbf{a} c^{-1} + \delta(c) c^{-1} \in \mathbb{F}^n .
\label{eq def conjugate}
\end{equation}
\end{definition}

We have the following properties, which extend the case $ n = 1 $ given in \cite[Eq. (2.6)]{lam-leroy}.

\begin{lemma}
Given $ \mathbf{a}, \mathbf{b} \in \mathbb{F}^n $ and $ c,d \in \mathbb{F}^* $, the following properties hold:
\begin{enumerate}
\item
$ \mathbf{a}^1 = \mathbf{a} $ and $ (\mathbf{a}^c)^d = \mathbf{a}^{dc} $.
\item
The relation $ \mathbf{a} \thicksim \mathbf{b} $ if, and only if, there exist $ e \in \mathbb{F}^* $ with $ \mathbf{b} = \mathbf{a}^e $, is an equivalence relation on $ \mathbb{F}^n $.
\end{enumerate}
\end{lemma}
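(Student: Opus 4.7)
The plan is to verify both properties by direct computation using only the definition (\ref{eq def conjugate}), the ring-morphism property of $\sigma$, and the defining identity $\delta(ab) = \sigma(a)\delta(b) + \delta(a)b$ of a $\sigma$-vector derivation.

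First I would dispose of the easy part of item 1, namely $\mathbf{a}^1 = \mathbf{a}$. From the definition, $\mathbf{a}^1 = \sigma(1)\mathbf{a} + \delta(1)$, so I need $\sigma(1) = I$ (which is part of $\sigma$ being a ring morphism) and $\delta(1) = 0$. The latter follows at once from $\delta(1) = \delta(1\cdot 1) = \sigma(1)\delta(1) + \delta(1)\cdot 1 = 2\delta(1)$, hence $\delta(1) = 0$.

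Next I would prove the composition law $(\mathbf{a}^c)^d = \mathbf{a}^{dc}$ by unfolding the outer conjugation:
\begin{align*}
(\mathbf{a}^c)^d &= \sigma(d)\mathbf{a}^c d^{-1} + \delta(d) d^{-1} \\
&= \sigma(d)\bigl(\sigma(c)\mathbf{a} c^{-1} + \delta(c) c^{-1}\bigr) d^{-1} + \delta(d) d^{-1} \\
&= \sigma(d)\sigma(c)\mathbf{a}(dc)^{-1} + \bigl(\sigma(d)\delta(c) + \delta(d)c\bigr)(dc)^{-1}.
\end{align*}
Now $\sigma(d)\sigma(c) = \sigma(dc)$ because $\sigma$ is multiplicative, and $\sigma(d)\delta(c) + \delta(d)c = \delta(dc)$ by the vector-derivation identity (\ref{eq multiplicative property vector derivations}). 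Substituting yields exactly $\sigma(dc)\mathbf{a}(dc)^{-1} + \delta(dc)(dc)^{-1} = \mathbf{a}^{dc}$.

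Item 2 is then a formal consequence of item 1. Reflexivity: $\mathbf{a} = \mathbf{a}^1$. Symmetry: if $\mathbf{b} = \mathbf{a}^c$ with $c \in \mathbb{F}^*$, then applying item 1 with $d = c^{-1}$ gives $\mathbf{b}^{c^{-1}} = (\mathbf{a}^c)^{c^{-1}} = \mathbf{a}^{c^{-1}c} = \mathbf{a}^1 = \mathbf{a}$, so $\mathbf{a} = \mathbf{b}^{c^{-1}}$ and $c^{-1} \in \mathbb{F}^*$. Transitivity: if $\mathbf{b} = \mathbf{a}^c$ and $\mathbf{c}' = \mathbf{b}^d$, then $\mathbf{c}' = (\mathbf{a}^c)^d = \mathbf{a}^{dc}$ with $dc \in \mathbb{F}^*$.

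There is no real obstacle here; the only subtlety is remembering that one must prove $\delta(1) = 0$ (it is not listed as an axiom in Definition \ref{def matrix mor and vector der}) and that one must be careful with the order of multiplication in $\mathbb{F}^*$ when inverting $c$, since $\mathbb{F}$ need not be commutative. Once these points are handled, the verification is purely algebraic.
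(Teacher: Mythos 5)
Your proof is correct; the paper states this lemma without proof (as a routine extension of the $n=1$ case from Lam--Leroy), and your direct computation is precisely the expected verification. You correctly identify the two points that require care, namely that $\delta(1)=0$ must be derived (by additive cancellation in $\delta(1) = \sigma(1)\delta(1) + \delta(1)\cdot 1$) and that $(dc)^{-1} = c^{-1}d^{-1}$ in a non-commutative division ring, and item~2 then follows formally from item~1 together with the group structure of $\mathbb{F}^*$.
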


If $ n = 1 $, $ \sigma = {\rm Id} $ and $ \delta = 0 $, then the previous notion of conjugacy coincides with the usual one on the multiplicative monoid of $ \mathbb{F} $, which explains the terminology.

As noted in \cite{hilbert90}, Hilbert's Theorem 90 can be understood as any effective criterion for conjugacy, which in its classical form (cyclic Galois extensions) is given by the classical norm function. The same idea can be used to reinterpret Hilbert's Theorem 90 over any Galois extension of fields \cite{lin-multivariateskew}, where the norm function is replaced by the fundamental functions from the last section.

We may now establish and prove the product rule. The case $ n = 1 $ was first given in \cite[Th. 2.7]{lam-leroy}. We follow their proof using our matrix/vector notation.

\begin{theorem}[\textbf{Product rule}] \label{th product rule}
Given skew polynomials $ F, G \in \mathbb{F}[\mathbf{x}; \sigma, \delta] $ and $ \mathbf{a} \in \mathbb{F}^n $, if $ G(\mathbf{a}) = 0 $, then $ (FG)(\mathbf{a}) = 0 $, and if $ c = G(\mathbf{a}) \neq 0 $, then
\begin{equation}
(FG)(\mathbf{a}) = F(\mathbf{a}^c) G(\mathbf{a}).
\label{eq product rule}
\end{equation}
\end{theorem}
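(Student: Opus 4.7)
The plan is to reduce the product rule to a statement about right-multiplication by a constant, and then prove that by induction over monomials using the recursion for fundamental functions from Theorem \ref{th fundamental functions}.

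First I would use Lemma \ref{th euclidean division} to write $G = G' + c$ where $c = G(\mathbf{a}) \in \mathbb{F}$ and $G' \in (x_1 - a_1, \ldots, x_n - a_n)$, that is, $G' = \sum_{i=1}^n H_i(x_i - a_i)$ for some $H_i \in \mathbb{F}[\mathbf{x};\sigma,\delta]$. Then $FG = FG' + Fc$ with $FG' = \sum_{i=1}^n (FH_i)(x_i - a_i) \in (x_1 - a_1, \ldots, x_n - a_n)$, so $(FG)(\mathbf{a}) = (Fc)(\mathbf{a})$ by definition. When $c = 0$ this already gives $(FG)(\mathbf{a}) = 0$, handling the first assertion. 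When $c \neq 0$ the task reduces to proving
\[
(Fc)(\mathbf{a}) = F(\mathbf{a}^c)\, c.
\]

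By left-linearity of evaluation (and of the product in $\mathcal{R}$), it suffices to prove that for every monomial $\mathfrak{m} \in \mathcal{M}$ one has $(\mathfrak{m} c)(\mathbf{a}) = N_\mathfrak{m}(\mathbf{a}^c)\, c$. I would do this by induction on $\deg(\mathfrak{m})$. The base case $\mathfrak{m} = 1$ is immediate since $N_1 \equiv 1$. For the inductive step, write $\mathfrak{m} = x_i \mathfrak{n}$ and set $e = N_\mathfrak{n}(\mathbf{a}^c)$; by the inductive hypothesis, $\mathfrak{n} c - ec$ lies in the left ideal, so multiplying on the left by $x_i$ keeps us in the left ideal and gives
\[
(x_i \mathfrak{n} c)(\mathbf{a}) = (x_i \cdot ec)(\mathbf{a}) = \big[\sigma(ec)\mathbf{a} + \delta(ec)\big]_i,
\]
using $\mathbf{x}(ec) = \sigma(ec)\mathbf{x} + \delta(ec)$ together with Lemma \ref{th euclidean division}. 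The target, by Theorem \ref{th fundamental functions}, is $N_{x_i \mathfrak{n}}(\mathbf{a}^c)\, c = \big[(\sigma(e)\mathbf{a}^c + \delta(e))\, c\big]_i$, so one must verify the vector identity
\[
\sigma(ec)\mathbf{a} + \delta(ec) = \big(\sigma(e)\mathbf{a}^c + \delta(e)\big) c.
\]

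This identity is where the definition of conjugate was tailor-made to work: expanding $\mathbf{a}^c = \sigma(c)\mathbf{a} c^{-1} + \delta(c)c^{-1}$ and using that $\sigma$ is a ring morphism together with the vector-derivation rule $\delta(ec) = \sigma(e)\delta(c) + \delta(e)c$, both sides collapse to $\sigma(e)\sigma(c)\mathbf{a} + \sigma(e)\delta(c) + \delta(e)c$. This is the main and only delicate computation; everything else is bookkeeping with Lemma \ref{th euclidean division} and left-linearity. Putting the induction together with the reduction in the first paragraph then yields $(FG)(\mathbf{a}) = (Fc)(\mathbf{a}) = F(\mathbf{a}^c)c$, completing the proof.
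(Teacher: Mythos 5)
Your proof is correct, but it takes a different path from the paper's. The paper proves the product rule in one stroke: it first records the algebraic identity $\left(\mathbf{x} - \mathbf{a}^c\right) c = \sigma(c)\left(\mathbf{x} - \mathbf{a}\right)$ (an immediate consequence of the commutation rule $\mathbf{x} c = \sigma(c)\mathbf{x} + \delta(c)$ and the definition of $\mathbf{a}^c$), then substitutes the two Euclidean divisions $F = \mathbf{P}^T\left(\mathbf{x} - \mathbf{a}^c\right) + F(\mathbf{a}^c)$ and $G = \mathbf{Q}^T\left(\mathbf{x} - \mathbf{a}\right) + c$ into $FG$, and reads off the remainder $F(\mathbf{a}^c)\, c$ after a short rearrangement — this is the matrix/vector transcription of the classical Lam--Leroy argument. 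You instead first reduce $(FG)(\mathbf{a})$ to $(Fc)(\mathbf{a})$ via the division of $G$, then reduce to monomials by left-linearity, and finally prove $(\mathfrak{m}c)(\mathbf{a}) = N_\mathfrak{m}(\mathbf{a}^c)\, c$ by induction on $\deg(\mathfrak{m})$ using the recursion of Theorem \ref{th fundamental functions}; the computational core is your vector identity $\sigma(ec)\mathbf{a} + \delta(ec) = \left(\sigma(e)\mathbf{a}^c + \delta(e)\right) c$, which is verified exactly as you describe and is the coordinate-level incarnation of the paper's one-line identity $\left(\mathbf{x} - \mathbf{a}^c\right) c = \sigma(c)\left(\mathbf{x} - \mathbf{a}\right)$. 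The paper's route is shorter and avoids any induction; your route is longer but has the merit of showing explicitly how the recursion for the fundamental functions $N_\mathfrak{m}$ interacts with conjugation, essentially deriving the formula $N_\mathfrak{m}(\mathbf{a}^c)\, c = (\mathfrak{m}c)(\mathbf{a})$ as a standalone fact of independent interest (it is the monomial case of the product rule, and appears implicitly when studying skew Vandermonde matrices over conjugacy classes).
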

\begin{proof}
It is obvious from Lemma \ref{th euclidean division} and Definition \ref{def standard evaluation} that, if $ G(\mathbf{a}) = 0 $, then $ (FG)(\mathbf{a}) = 0 $. Now assume that $ c = G(\mathbf{a}) \neq 0 $. First observe that
$$ \left( \mathbf{x} - \mathbf{a}^c \right) c = \sigma(c) (\mathbf{x} - \mathbf{a}). $$

Second, by Definition \ref{def standard evaluation} there exist skew polynomials $ P_i, Q_i \in \mathbb{F}[\mathbf{x}; \sigma, \delta] $, for $ i = 1,2, \ldots, n $, such that
$$ F = \mathbf{P}^T \left( \mathbf{x} - \mathbf{a}^c \right) + F \left( \mathbf{a}^c \right), \textrm{ and } G = \mathbf{Q}^T (\mathbf{x} - \mathbf{a}) + G(\mathbf{a}), $$
where $ \mathbf{P} $ and $ \mathbf{Q} $ denote the column vectors whose $ i $th rows are $ P_i $ and $ Q_i $, respectively, for $ i = 1,2, \ldots, n $. Combining these facts, we obtain that
$$ FG = F \mathbf{Q}^T (\mathbf{x} - \mathbf{a}) + F G(\mathbf{a}) $$
$$ = \left( F \mathbf{Q}^T + \mathbf{P}^T \sigma(c) \right) (\mathbf{x} - \mathbf{a}) + F(\mathbf{a}^c)G(\mathbf{a}), $$
and we are done.
\end{proof}

This theorem can be stated when $ \mathbb{F} $ is an arbitrary ring by considering only the cases where $ c=0 $ or $ c $ is a unit. The fact that only one of these two cases happen when $ \mathbb{F} $ is a division ring will be crucial in Proposition \ref{prop two-sided ideals} and from Section \ref{sec lagrange interpolation} onwards.

Note that $ \mathbf{a}^c \neq \mathbf{a} $ in general when $ \mathbb{F} $ is non-commutative even if $ \sigma = {\rm Id} $ and $ \delta = 0 $. Thus the product rule is still of value for conventional polynomials over division rings. In particular, it may be that $ F(\mathbf{a}) = 0 $ and $ (FG)(\mathbf{a}) \neq 0 $ even for conventional polynomials, when $ \mathbb{F} $ is non-commutative.

\section{Zeros of multivariate skew polynomials} \label{sec zeros}

In this section, we will define and give the basic properties of sets of zeros of multivariate skew polynomials and, conversely, sets of skew polynomials that vanish at a certain set of affine points, which will be crucial in Section \ref{sec lagrange interpolation} for Lagrange interpolation. Conceptually, they will also be important in Section \ref{sec general skew} to define general skew polynomial rings with relations on the variables (\textit{nonfree}) and where evaluation still works in a natural way.

Observe that at this point our theory loses most of its analogies with the univariate case \cite{lam, algebraic-conjugacy, lam-leroy}, since $ \mathbb{F}[\mathbf{x}; \sigma, \delta] $ is not a principal ideal domain if $ n > 1 $, hence the use of \textit{minimal skew polynomials} as in \cite{lam, algebraic-conjugacy} is not possible. On the other hand, we gain analogy with respect to classical algebraic geometry:

\begin{definition} [\textbf{Zeros of skew polynomials}]
Given a set $ A \subseteq \mathbb{F}[\mathbf{x}; \sigma, \delta] $, we define its zero set as
$$ Z(A) = \{ \mathbf{a} \in \mathbb{F}^n \mid F(\mathbf{a}) = 0, \forall F \in A \}. $$
And given a set $ \Omega \subseteq \mathbb{F}^n $, we define its associated ideal as
$$ I(\Omega) = \{ F \in \mathbb{F}[\mathbf{x}; \sigma, \delta] \mid F(\mathbf{a}) = 0, \forall \mathbf{a} \in \Omega \}. $$
\end{definition}

Observe that the ideal associated to a subset of $ \mathbb{F}^n $ is indeed a left ideal:

\begin{proposition}
For any $ \Omega \subseteq \mathbb{F}^n $, it holds that $ I(\Omega) \subseteq \mathbb{F}[\mathbf{x}; \sigma, \delta] $ is a left ideal.
\end{proposition}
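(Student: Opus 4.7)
The plan is to verify the two defining properties of a left ideal: closure under addition, and closure under left multiplication by arbitrary elements of $\mathbb{F}[\mathbf{x}; \sigma, \delta]$.

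First, I would handle closure under addition. Fix $F_1, F_2 \in I(\Omega)$ and $\mathbf{a} \in \Omega$. Since the evaluation map $E_{\mathbf{a}}^{\sigma,\delta}$ is a left linear form (this is established right after Definition \ref{def standard evaluation} using Lemma \ref{th euclidean division}), we have $(F_1+F_2)(\mathbf{a}) = F_1(\mathbf{a}) + F_2(\mathbf{a}) = 0 + 0 = 0$, so $F_1 + F_2 \in I(\Omega)$.

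Second, I would handle closure under left multiplication. Let $H \in \mathbb{F}[\mathbf{x}; \sigma, \delta]$ and $F \in I(\Omega)$, and fix $\mathbf{a} \in \Omega$. Since $F(\mathbf{a}) = 0$, the first part of the product rule (Theorem \ref{th product rule}) immediately gives $(HF)(\mathbf{a}) = 0$. Hence $HF \in I(\Omega)$.

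There is essentially no obstacle here; the proposition is a direct consequence of the two structural results already proved, namely the left linearity of standard evaluation and the product rule. The one conceptual remark worth making explicitly is why $I(\Omega)$ is asserted only to be a \emph{left} ideal rather than two-sided: the product rule tells us that $G(\mathbf{a})=0$ implies $(FG)(\mathbf{a})=0$, but when we multiply an $F \in I(\Omega)$ on the right by some $G$, the rule only yields $(FG)(\mathbf{a}) = F(\mathbf{a}^c) G(\mathbf{a})$ with $c = G(\mathbf{a})$, and in general $\mathbf{a}^c \notin \Omega$, so $F(\mathbf{a}^c)$ need not vanish. This asymmetry is what motivates the subsequent discussion of which ideals of zeros are in fact two-sided (referenced as Proposition \ref{prop two-sided ideals}).
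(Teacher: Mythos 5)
Your proof is correct and uses exactly the route the paper indicates — left linearity of evaluation for closure under addition, and the first part of the product rule (Theorem \ref{th product rule}) for closure under left multiplication. The paper also notes an alternative argument, namely that $I(\Omega) = \bigcap_{\mathbf{a} \in \Omega} (x_1-a_1, \ldots, x_n-a_n)$ is an intersection of left ideals, but your product-rule approach matches the paper's primary suggestion.
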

\begin{proof}
It follows directly from the product rule (Theorem \ref{th product rule}). Alternatively, it can be proven by noting that $ I(\Omega) = \bigcap_{\mathbf{a} \in \Omega} (x_1 - a_1, x_2 - a_2, \ldots, x_n - a_n) $.
\end{proof}

We next list some basic properties of zero sets and ideals of zeros that follow from the definitions, in the same way as in classical algebraic geometry.

\begin{proposition} \label{prop properties of zeros}
Let $ \Omega, \Omega_1, \Omega_2 \subseteq \mathbb{F}^n $ and $ A, A_1, A_2 \subseteq \mathbb{F}[\mathbf{x}; \sigma, \delta] $ be arbitrary sets. The following properties hold:
\begin{enumerate}
\item
$ I(\{ \mathbf{a} \}) = \left( x_1-a_1, x_2 - a_2, \ldots, x_n - a_n \right) $ and $ Z(x_1-a_1, x_2 - a_2, \ldots, x_n - a_n) = \{ \mathbf{a} \} $, for all $ \mathbf{a} = (a_1, a_2, \ldots, a_n) \in \mathbb{F}^n $.
\item
$ I(\varnothing) = \left( 1 \right) $ and $ Z( 1 ) = \varnothing $.
\item
$ I(\mathbb{F}^n) \subseteq I(\Omega) $ and $ Z(\{ 0 \}) = \mathbb{F}^n $. That is, $ I(\mathbb{F}^n) $ is the minimal ideal of zeros.
\item
If $ \Omega_1 \subseteq \Omega_2 $, then $ I(\Omega_2) \subseteq I(\Omega_1) $.
\item
If $ A_1 \subseteq A_2 $, then $ Z(A_2) \subseteq Z(A_1) $.
\item
$ I(\Omega_1 \cup \Omega_2) = I(\Omega_1) \cap I(\Omega_2) $.
\item
$ Z(A) = Z(\left( A \right)) $ and $ Z(A_1 \cup A_2) = Z(\left( A_1 \right) + \left( A_2 \right)) = Z(A_1) \cap Z(A_2) $.
\item
$ \Omega \subseteq Z(I(\Omega)) $, and equality holds if, and only if, $ \Omega = Z(B) $ for some $ B \subseteq \mathbb{F}[\mathbf{x}; \sigma, \delta] $.
\item
$ A \subseteq \left( A \right) \subseteq I(Z(A)) $, and equality holds if, and only if, $ A = I(\Psi) $ for some $ \Psi \subseteq \mathbb{F}^n $.
\end{enumerate}
\end{proposition}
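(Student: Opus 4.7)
The plan is to verify each item essentially from the definitions of $Z$ and $I$, treating the pair $(Z,I)$ as a Galois connection between subsets of $\mathbb{F}^n$ and subsets of $\mathbb{F}[\mathbf{x};\sigma,\delta]$. The only non-trivial tools required are the Euclidean division in Lemma \ref{th euclidean division} (with the standard evaluation of Definition \ref{def standard evaluation}), the fundamental-function recursion in Theorem \ref{th fundamental functions}, left linearity of $E_{\mathbf{a}}$, and the product rule in Theorem \ref{th product rule}.

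First I would dispose of items 1 and 2 together. For item 1, the inclusion $(x_1-a_1,\ldots,x_n-a_n)\subseteq I(\{\mathbf{a}\})$ is immediate since each generator vanishes at $\mathbf{a}$ and $I(\{\mathbf{a}\})$ is a left ideal; the reverse inclusion is exactly Definition \ref{def standard evaluation}, applied to any $F\in I(\{\mathbf{a}\})$. For the $Z$-equality in item 1, observe from Theorem \ref{th fundamental functions} with $\mathfrak{m}=1$ that $N_{x_i}(\mathbf{b})$ is the $i$-th entry of $\sigma(1)\mathbf{b}+\delta(1)=\mathbf{b}$, so $(x_i-a_i)(\mathbf{b})=b_i-a_i$, forcing $\mathbf{b}=\mathbf{a}$ in $Z(x_1-a_1,\ldots,x_n-a_n)$. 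Item 2 is immediate because every $F$ trivially vanishes on $\varnothing$ (giving $I(\varnothing)=\mathbb{F}[\mathbf{x};\sigma,\delta]=(1)$) and the constant polynomial $1$ evaluates to $1$ at every point. Item 3 is similarly immediate, using that $0$ vanishes everywhere and using the monotonicity in item 4.

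Next I would handle the monotonicity/union items 4, 5, and 6, all of which read off at once from the definitions: enlarging $\Omega$ shrinks $I(\Omega)$ and enlarging $A$ shrinks $Z(A)$, while a polynomial vanishes on $\Omega_1\cup\Omega_2$ iff it vanishes on each piece. For item 7, the equality $Z(A)=Z((A))$ is the only place where more than definitions are needed: $\supseteq$ holds by monotonicity (item 5) since $A\subseteq (A)$, while for $\subseteq$, any element of $(A)$ has the form $\sum H_iF_i$ with $F_i\in A$, and the product rule (Theorem \ref{th product rule}) gives $(H_iF_i)(\mathbf{a})=0$ whenever $F_i(\mathbf{a})=0$, so left linearity kills the sum. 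The identity $Z(A_1\cup A_2)=Z(A_1)\cap Z(A_2)$ is immediate from definitions, and the middle equality $Z((A_1)+(A_2))=Z(A_1\cup A_2)$ follows by applying the first part of item 7 to $A_1\cup A_2$ and using $((A_1\cup A_2))=(A_1)+(A_2)$.

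Finally, items 8 and 9 are the standard closure properties of a Galois connection, and I would treat them in parallel. The inclusions $\Omega\subseteq Z(I(\Omega))$ and $A\subseteq (A)\subseteq I(Z(A))$ are immediate from the definitions (and, for the middle inclusion in item 9, from item 7). For the equality characterizations, if $\Omega=Z(B)$ then monotonicity gives $I(\Omega)=I(Z(B))\supseteq B$, hence $Z(I(\Omega))\subseteq Z(B)=\Omega$, and combined with the trivial inclusion we get equality; the converse is witnessed by $B=I(\Omega)$. The same argument, with the roles of $Z$ and $I$ swapped and with item 7 used to reduce $A$ to $(A)$, handles item 9. The only point that requires any care, and the one I flag as the main (minor) obstacle, is ensuring that in item 7 the product rule is applied in the correct order; since $(A)$ is the left ideal generated by $A$, the factor $F_i\in A$ appears on the right of each term, so the relevant case of Theorem \ref{th product rule} is precisely the one stating $(HF)(\mathbf{a})=0$ whenever $F(\mathbf{a})=0$, which holds without any conjugacy correction.
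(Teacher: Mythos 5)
Your proposal is correct. Note, however, that the paper gives \emph{no} proof of this proposition at all: it simply asserts that the items ``follow from the definitions, in the same way as in classical algebraic geometry.'' So there is no argument in the paper to compare against, and your write-up is a reasonable spelling-out of the omitted reasoning. You correctly identify the two places where something beyond unwinding the definitions is genuinely required: for item 1, the characterization of $F(\mathbf{a})$ via unique remainder in Definition \ref{def standard evaluation} (in fact, that definition already gives $F(\mathbf{a})=0 \Leftrightarrow F \in (x_1-a_1,\ldots,x_n-a_n)$ in one stroke, so the appeal to Theorem \ref{th fundamental functions} for the $Z$-half is optional --- one can also read $N_{x_i}(\mathbf{b})=b_i$ directly off the definition since $x_i - b_i \in (x_1-b_1,\ldots,x_n-b_n)$); and for item 7, the $G(\mathbf{a})=0$ case of the product rule (Theorem \ref{th product rule}), which, as you observe, is exactly the direction needed because the ideal generated on the left places the element of $A$ as the right factor. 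The remaining items are indeed pure Galois-connection bookkeeping, and your treatment of items 8--9, including the observation that $A=I(\Psi)$ forces $A=(A)$ because ideals of zeros are left ideals, closes the loop.
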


Item 8 in the previous proposition motivates the definition of \textit{P-closed sets}, where the case $ n = 1 $ was given in \cite{lam, lam-leroy}:

\begin{definition}[\textbf{P-closures}]
Given a subset $ \Omega \subseteq \mathbb{F}^n $, we define its P-closure as 
$$ \overline{\Omega} = Z(I(\Omega)), $$
and we say that $ \Omega $ is P-closed if $ \overline{\Omega} = \Omega $.
\end{definition}

By Proposition \ref{prop properties of zeros}, Item 8, P-closed sets correspond to sets of zeros of sets of skew polynomials, and we have the following:

\begin{lemma}
Given a subset $ \Omega \subseteq \mathbb{F}^n $, it holds that $ \overline{\Omega} $ is the smallest P-closed subset of $ \mathbb{F}^n $ containing $ \Omega $. 
\end{lemma}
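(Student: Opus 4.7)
The plan is to verify three things: that $\overline{\Omega}$ contains $\Omega$, that $\overline{\Omega}$ is itself P-closed, and that any P-closed set $\Psi$ containing $\Omega$ must contain $\overline{\Omega}$. Each of these will follow almost immediately from the listed items of Proposition \ref{prop properties of zeros}.

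First, containment $\Omega \subseteq \overline{\Omega}$ is literally Item 8 of Proposition \ref{prop properties of zeros}, applied directly to $\Omega$.

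Second, to show $\overline{\Omega}$ is P-closed, the key observation is that $\overline{\Omega} = Z(I(\Omega))$ is exhibited as a set of the form $Z(B)$, with $B = I(\Omega) \subseteq \mathbb{F}[\mathbf{x};\sigma,\delta]$. By the biconditional in Item 8 of Proposition \ref{prop properties of zeros} (equality $\Omega = Z(I(\Omega))$ holds precisely when $\Omega$ is a zero set), we get $\overline{\Omega} = Z(I(\overline{\Omega}))$, which is exactly the definition of $\overline{\Omega}$ being P-closed.

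Third, for minimality, suppose $\Psi \subseteq \mathbb{F}^n$ is P-closed and $\Omega \subseteq \Psi$. By Item 4 of Proposition \ref{prop properties of zeros}, the reverse inclusion $I(\Psi) \subseteq I(\Omega)$ holds, and then applying Item 5 yields $Z(I(\Omega)) \subseteq Z(I(\Psi))$. Since $\Psi$ is P-closed, $Z(I(\Psi)) = \Psi$, so $\overline{\Omega} = Z(I(\Omega)) \subseteq \Psi$, as desired.

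I do not expect any real obstacle here: the lemma is formally the standard ``closure operator from a Galois connection'' argument, and every individual step is a one-line invocation of Proposition \ref{prop properties of zeros}. The only point requiring any care is citing the correct direction of the biconditional in Item 8 when showing P-closedness of $\overline{\Omega}$; everything else is bookkeeping.
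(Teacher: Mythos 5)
Your proof is correct and follows precisely the Galois-connection argument that the paper leaves implicit (the paper states this lemma without proof, citing only Item 8 of Proposition \ref{prop properties of zeros}). You invoke the correct items (4, 5, and the biconditional in 8) in the right directions, and the three-part decomposition into containment, P-closedness, and minimality is exactly the standard closure-operator argument the authors clearly have in mind.
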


\section{General and minimal skew polynomial rings} \label{sec general skew}

In this section, we define \textit{general skew polynomial rings} as those with a set of relations on the variables and where evaluation is still as in Definition \ref{def standard evaluation}. In particular, by considering a maximum set of such relations, we may define \textit{minimal skew polynomial rings}.

Note that the whole space $ \mathbb{F}^n $ is P-closed, and Item 3 in Proposition \ref{prop properties of zeros} says that, for evaluation purposes, we may just consider the quotient left module
$$ \mathbb{F}[\mathbf{x}; \sigma, \delta] / I(\mathbb{F}^n), $$
which is a ring if $ I(\mathbb{F}^n) $ is a two-sided ideal, and in such a case we obtain the above mentioned minimal skew polynomial ring where the $ (\sigma, \delta) $-standard evaluation is still defined.

In the following proposition, we characterize when an ideal of zeros is two-sided, which includes in particular the ideal $ I(\mathbb{F}^n) $:

\begin{proposition} \label{prop two-sided ideals}
Given a subset $ \Omega \subseteq \mathbb{F}^n $, the following are equivalent:
\begin{enumerate}
\item
$ I(\Omega) $ is a two-sided ideal.
\item
If $ F \in I(\Omega) $ and $ c \in \mathbb{F} $, then $ Fc \in I(\Omega) $.
\item
If $ \mathbf{a} \in \overline{\Omega} $, then $ \mathbf{a}^c \in \overline{\Omega} $, for all $ c \in \mathbb{F}^* $.
\item
If $ \mathbf{a} \in \Omega $, then $ \mathbf{a}^c \in \overline{\Omega} $, for all $ c \in \mathbb{F}^* $.
\end{enumerate}
In particular, $ I(\mathbb{F}^n) $ is a two-sided ideal.
\end{proposition}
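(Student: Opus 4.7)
The plan is to establish the cycle $1 \Rightarrow 2 \Rightarrow 3 \Rightarrow 4 \Rightarrow 1$. Two of the four arrows are essentially free: $1 \Rightarrow 2$ holds because every constant $c \in \mathbb{F}$ embeds in $\mathbb{F}[\mathbf{x};\sigma,\delta]$ as a scalar skew polynomial, so right multiplication by $c$ is a special case of right multiplication by a ring element; and $3 \Rightarrow 4$ is immediate from $\Omega \subseteq \overline{\Omega}$ (Item 8 of Proposition \ref{prop properties of zeros}). The substantive content lives in $2 \Rightarrow 3$ and $4 \Rightarrow 1$, and in both cases the key tool is the product rule (Theorem \ref{th product rule}) combined with the fact that nonzero elements of $\mathbb{F}$ are units.

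For $2 \Rightarrow 3$, I would fix $\mathbf{a} \in \overline{\Omega}$ and $c \in \mathbb{F}^*$ and show that every $F \in I(\Omega)$ vanishes at $\mathbf{a}^c$. By hypothesis 2, $Fc \in I(\Omega)$, so $(Fc)(\mathbf{a}) = 0$ because $\mathbf{a} \in \overline{\Omega} = Z(I(\Omega))$. Viewing $c$ as a constant skew polynomial, its evaluation at $\mathbf{a}$ is $c \neq 0$, so the product rule gives $(Fc)(\mathbf{a}) = F(\mathbf{a}^c)\,c$. Since $c$ is invertible, $F(\mathbf{a}^c) = 0$, hence $\mathbf{a}^c \in Z(I(\Omega)) = \overline{\Omega}$.

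For $4 \Rightarrow 1$, since $I(\Omega)$ is already a left ideal, I only need $FG \in I(\Omega)$ for every $F \in I(\Omega)$ and every $G \in \mathbb{F}[\mathbf{x};\sigma,\delta]$. Fixing $\mathbf{a} \in \Omega$, the product rule bifurcates. If $G(\mathbf{a}) = 0$, then $(FG)(\mathbf{a}) = 0$ directly. Otherwise $c := G(\mathbf{a}) \in \mathbb{F}^*$, and $(FG)(\mathbf{a}) = F(\mathbf{a}^c)\,G(\mathbf{a})$; hypothesis 4 places $\mathbf{a}^c$ in $\overline{\Omega}$, and a short sub-step gives $I(\Omega) = I(\overline{\Omega})$ by combining Items 8 and 9 of Proposition \ref{prop properties of zeros} (so $I(\Omega) \subseteq I(Z(I(\Omega))) = I(\overline{\Omega}) \subseteq I(\Omega)$). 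Hence $F(\mathbf{a}^c) = 0$ and $(FG)(\mathbf{a}) = 0$ as required. The final sentence of the proposition is then immediate: for $\Omega = \mathbb{F}^n$ condition 4 holds vacuously, since $\mathbf{a}^c \in \mathbb{F}^n = \overline{\mathbb{F}^n}$ automatically.

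The main thing to watch, rather than a serious obstacle, is ensuring the division-ring hypothesis on $\mathbb{F}$ is used correctly in both nontrivial implications: the cancellation $F(\mathbf{a}^c)\,c = 0 \Rightarrow F(\mathbf{a}^c) = 0$ in $2 \Rightarrow 3$ breaks without invertibility of $c$, and the clean dichotomy $G(\mathbf{a}) = 0$ or $G(\mathbf{a}) \in \mathbb{F}^*$ underlying $4 \Rightarrow 1$ similarly depends on $\mathbb{F}$ being a division ring, in line with the remarks made after Theorem \ref{th product rule}.
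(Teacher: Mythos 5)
Your proof is correct and follows essentially the same approach as the paper: the same cycle $1 \Rightarrow 2 \Rightarrow 3 \Rightarrow 4 \Rightarrow 1$, with the substantive implications $2 \Rightarrow 3$ and $4 \Rightarrow 1$ both driven by the product rule (Theorem \ref{th product rule}) together with the identification $I(\Omega) = I(\overline{\Omega})$ from Proposition \ref{prop properties of zeros}, and the same split on whether $G(\mathbf{a})$ vanishes or is a unit. Your explicit justification of the ``in particular'' clause (condition 4 holds vacuously when $\Omega = \mathbb{F}^n$) is a sensible small addition but not a different route.
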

\begin{proof}
We prove the following implications:

$ 1) \Longrightarrow 2) $: Trivial.

$ 2) \Longrightarrow 3) $: Let $ \mathbf{a} \in \overline{\Omega} $, $ F \in I(\Omega) $ and $ c \in \mathbb{F}^* $. First, it holds that $ I(\Omega) = I(\overline{\Omega}) $ by Items 8 and 9 in Proposition \ref{prop properties of zeros}, and $ Fc \in I(\Omega) $ by hypothesis. Thus
$$ 0 = (Fc)(\mathbf{a}) = F(\mathbf{a}^c) c $$
by the product rule (Theorem \ref{th product rule}). Hence $ \mathbf{a}^c \in Z(I(\Omega)) = \overline{\Omega} $.

$ 3) \Longrightarrow 4) $: Trivial from $ \Omega \subseteq \overline{\Omega} $.

$ 4) \Longrightarrow 1) $: Let $ F \in I(\Omega) $ and $ G \in \mathbb{F}[\mathbf{x}; \sigma, \delta] $, fix $ \mathbf{a} \in \Omega $ and define $ c = G(\mathbf{a}) $. If $ c = 0 $, then $ (FG)(\mathbf{a}) = 0 $ by the product rule. If $ c \neq 0 $, by hypothesis and the product rule, we have that
$$ (FG)(\mathbf{a}) = F(\mathbf{a}^c) G(\mathbf{a}) = 0, $$
since $ \mathbf{a}^c \in \overline{\Omega} $ and $ F \in I(\Omega) = I(\overline{\Omega}) $. Hence $ (FG)(\mathbf{a}) = 0 $ for any $ \mathbf{a} \in \Omega $ and thus $ FG \in I(\Omega) $, and we are done.
\end{proof}

Observe that, to prove $ 4) \Longrightarrow 1) $, we use that $ \mathbb{F} $ is a division ring, since we use that every $ c \in \mathbb{F} \setminus \{ 0 \} $ is invertible.

We may now define (nonfree) general skew polynomial rings and, in particular, a minimal one.

\begin{definition} [\textbf{Skew polynomial rings}] \label{def skew polynomial rings}
For any two-sided ideal $ I \subseteq I(\mathbb{F}^n) $, we say that the quotient ring
$$ \mathbb{F}[\mathbf{x}; \sigma, \delta] / I $$
is a skew polynomial ring with matrix morphism $ \sigma $ and vector derivation $ \delta $. The minimal skew polynomial ring with matrix morphism $ \sigma $ and vector derivation $ \delta $ is defined as that obtained when $ I = I(\mathbb{F}^n) $.
\end{definition}

This is exactly what happens with multivariate conventional polynomial rings (the case $ \sigma = {\rm Id} $ and $ \delta = 0 $) over fields. One may consider the free multivariate polynomial ring and define the conventional evaluation on it, either by plugging values in the variables or equivalently by unique remainder division. When $ \mathbb{F} $ is a field, $ I(\mathbb{F}^n) $ contains the two-sided ideal $ J $ generated by $ x_ix_j - x_jx_i $, for $ 1 \leq i < j \leq n $.  If $ \mathbb{F} $ is infinite, then $ J = I(\mathbb{F}^n) $, whereas 
\begin{equation}
I(\mathbb{F}^n) = J + ( x_1^q - x_1, x_2^q - x_2, \ldots, x_n^q - x_n )
\label{eq min conventional skew pol ring for finite}
\end{equation}
if $ \mathbb{F} $ is finite and has $ q $ elements. The results in the following sections will be proven for the free multivariate skew polynomial ring. By projecting onto the quotient, they can also be stated for any multivariate skew polynomial ring.

We conclude this section by showing that skew polynomial rings can form iterated sequences of rings by adding variables, even though we do not consider iterated skew polynomial rings in the standard way, as shown in Remark \ref{remark why not iterated skew polynomials}. The proof of the following result is straightforward.

\begin{proposition}
Let $ 0 < r < n $ be a positive integer, let $ \tau : \mathbb{F} \longrightarrow \mathbb{F}^{r \times r} $ and $ \nu : \mathbb{F} \longrightarrow \mathbb{F}^{(n-r) \times (n-r)} $ be matrix morphisms and let $ \delta_\tau : \mathbb{F} \longrightarrow \mathbb{F}^r $ and $ \delta_\nu : \mathbb{F} \longrightarrow \mathbb{F}^{(n-r)} $ be a $ \tau $-vector derivation and a $ \nu $-vector derivation, respectively. Define $ \sigma : \mathbb{F} \longrightarrow \mathbb{F}^{n \times n} $ and $ \delta_\sigma : \mathbb{F} \longrightarrow \mathbb{F}^n $ by
$$ \sigma(a) = \left( \begin{array}{cc}
\tau(a) & 0 \\
0 & \nu(a)
\end{array} \right) \quad \textrm{and} \quad \delta_\sigma(a) = \left( \begin{array}{c}
\delta_\tau(a) \\
\delta_\nu(a)
\end{array} \right), $$
for all $ a \in \mathbb{F} $. Then $ \sigma $ is a matrix morphism and $ \delta_\sigma $ is a $ \sigma $-vector derivation. Consider now the natural inclusion map
$$ \rho : \mathbb{F}[x_1, x_2, \ldots, x_r ; \tau, \delta_\tau] \longrightarrow \mathbb{F}[x_1, x_2, \ldots, x_n ; \sigma, \delta_\sigma]. $$
The following properties hold:
\begin{enumerate}
\item
$ \rho $ is a one to one ring morphism.
\item
For all $ F \in \mathbb{F}[x_1, x_2, \ldots, x_r ; \tau, \delta_\tau] $, all $ \mathbf{a}_\tau \in \mathbb{F}^r $ and all $ \mathbf{a}_\nu \in \mathbb{F}^{n-r} $, it holds that
$$ E_{\mathbf{a}_\tau}^{\tau, \delta_\tau}(F) = E_{\mathbf{a}_\sigma}^{\sigma, \delta_\sigma}(\rho(F)), $$
where $ \mathbf{a}_\sigma = (\mathbf{a}_\tau, \mathbf{a}_\nu) \in \mathbb{F}^n $.
\item
For any two-sided ideal $ J \subseteq I(\mathbb{F}^n) $, it holds that $ \rho^{-1}(J) \subseteq I(\mathbb{F}^r) $ is a two-sided ideal and $ \rho $ can be restricted to a one to one ring morphism
$$ \rho : \mathbb{F}[x_1, x_2, \ldots, x_r ; \tau, \delta_\tau] / \rho^{-1}(J) \longrightarrow \mathbb{F}[x_1, x_2, \ldots, x_n ; \sigma, \delta_\sigma] / J. $$
This holds in particular choosing $ J = I(\mathbb{F}^n) $, which implies that $ \rho^{-1}(J) = I(\mathbb{F}^r) $.
\end{enumerate}
\end{proposition}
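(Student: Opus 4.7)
The plan is to prove the three items in order; the only substantive work is an induction on monomial degree in item~2. Before that, I would quickly verify that $\sigma$ is indeed a matrix morphism and $\delta_\sigma$ a $\sigma$-vector derivation: the identities $\sigma(ab)=\sigma(a)\sigma(b)$ and $\sigma(1)=I$ follow from block-matrix multiplication applied to the analogous identities for $\tau$ and $\nu$, and the identity $\delta_\sigma(ab)=\sigma(a)\delta_\sigma(b)+\delta_\sigma(a)b$ decomposes along the two blocks into the corresponding identities for $\delta_\tau$ and $\delta_\nu$.

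For item~1, I would define $\rho$ on monomials in $x_1,\ldots,x_r$ by sending each such word to the same word viewed in the larger monoid $\mathcal{M}$, and extend left $\mathbb{F}$-linearly. Injectivity is immediate because distinct basis monomials go to distinct basis monomials. For the ring morphism property, by Theorem~\ref{th multiplication is additive} it suffices to check that the defining relation $x_i a = \sum_{j=1}^{r} \tau_{i,j}(a) x_j + (\delta_\tau)_i(a)$, for $i\le r$ and $a\in\mathbb{F}$, agrees under $\rho$ with the corresponding relation $x_i a = \sum_{j=1}^{n} \sigma_{i,j}(a) x_j + (\delta_\sigma)_i(a)$ in the larger ring. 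This follows from the block-diagonal form: for $i\le r$, $\sigma_{i,j}(a)=\tau_{i,j}(a)$ for $j\le r$, $\sigma_{i,j}(a)=0$ for $j>r$, and $(\delta_\sigma)_i(a)=(\delta_\tau)_i(a)$.

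For item~2, by left $\mathbb{F}$-linearity of both total evaluation maps it is enough to prove that $N^{\tau,\delta_\tau}_{\mathfrak{m}}(\mathbf{a}_\tau)=N^{\sigma,\delta_\sigma}_{\mathfrak{m}}(\mathbf{a}_\sigma)$ for every monomial $\mathfrak{m}$ in $x_1,\ldots,x_r$, which I would do by induction on $\deg(\mathfrak{m})$. The base case $\mathfrak{m}=1$ gives $1$ on both sides. For the inductive step, applying Theorem~\ref{th fundamental functions} in the larger ring to $x_i\mathfrak{m}$ with $i\le r$ and writing $\mathbf{a}_\sigma=(a_1,\ldots,a_n)$, the $i$th coordinate of the vector recursion reads
\begin{equation*}
N^{\sigma,\delta_\sigma}_{x_i\mathfrak{m}}(\mathbf{a}_\sigma) = \sum_{j=1}^{n} \sigma_{i,j}\bigl(N^{\sigma,\delta_\sigma}_{\mathfrak{m}}(\mathbf{a}_\sigma)\bigr) a_j + (\delta_\sigma)_i\bigl(N^{\sigma,\delta_\sigma}_{\mathfrak{m}}(\mathbf{a}_\sigma)\bigr),
\end{equation*}
and the same block-structural observations used in item~1, together with the inductive hypothesis, collapse this exactly to the recursion defining $N^{\tau,\delta_\tau}_{x_i\mathfrak{m}}(\mathbf{a}_\tau)$.

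For item~3, two-sidedness of $\rho^{-1}(J)$ is automatic because $\rho$ is a ring morphism. The inclusion $\rho^{-1}(J)\subseteq I(\mathbb{F}^r)$ follows from item~2: given $F\in\rho^{-1}(J)$ and $\mathbf{a}_\tau\in\mathbb{F}^r$, pick any $\mathbf{a}_\nu\in\mathbb{F}^{n-r}$; then $E^{\tau,\delta_\tau}_{\mathbf{a}_\tau}(F)=E^{\sigma,\delta_\sigma}_{(\mathbf{a}_\tau,\mathbf{a}_\nu)}(\rho(F))=0$ because $\rho(F)\in J\subseteq I(\mathbb{F}^n)$. The induced injective ring morphism on quotients is then standard, and in the special case $J=I(\mathbb{F}^n)$ the reverse inclusion $I(\mathbb{F}^r)\subseteq\rho^{-1}(I(\mathbb{F}^n))$ is proved by reading the same calculation in the opposite direction. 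The only difficulty anywhere is bookkeeping the block indices; the block-diagonal form of $\sigma$ guarantees that the first $r$ coordinates of the fundamental-function recursion are insensitive to $\mathbf{a}_\nu$ and reduce exactly to the recursion for $\tau$ and $\delta_\tau$.
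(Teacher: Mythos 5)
Your proof is correct and supplies the details for a proposition the paper declares ``straightforward'' without actually writing out a proof. The strategy --- verifying the matrix-morphism and vector-derivation axioms blockwise, using the uniqueness clause of Theorem~\ref{th multiplication is additive} to make $\rho$ a ring morphism, inducting on monomial degree via the recursion in Theorem~\ref{th fundamental functions} for item~2, and deriving item~3 from item~2 together with the standard isomorphism theorem --- is exactly what the surrounding framework is designed for, and I see no gaps. One small point worth spelling out in item~1: the uniqueness part of Theorem~\ref{th multiplication is additive} is invoked for the $r$-variable ring with data $(\tau,\delta_\tau)$, after first observing that the left $\mathbb{F}$-span of words in $x_1,\ldots,x_r$ is closed under the product of the larger ring (because the block-diagonal form of $\sigma$ makes $x_i a$, for $i\le r$, involve only $x_j$ with $j\le r$), so that the restricted product on that subring satisfies all the hypotheses of the theorem for $(\tau,\delta_\tau)$ and must therefore agree with the intrinsic product of $\mathbb{F}[x_1,\ldots,x_r;\tau,\delta_\tau]$.
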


In particular, if $ \sigma $ and $ \delta $ are given as in Example \ref{example diagonal case}, then $ \mathbb{F}[\mathbf{x};\sigma, \delta] $ contains a sequence of $ n $ nested skew polynomial rings, where the first one is the univariate skew polynomial ring $ \mathbb{F}[x_1; \sigma_1, \delta_1] $.

\begin{remark} \label{remark iterated are also quotients}
Iterated skew polynomial rings such that $ \delta_i(\mathbb{F}) \subseteq \mathbb{F} + \mathbb{F}x_1 + \cdots + \mathbb{F}x_{i-1} $, for $ i = 1,2, \ldots, n $, are also quotients of free multivariate skew polynomial rings since they satisfy the rules (\ref{eq def inner product}), setting $ \sigma_{i,i}(a) = \sigma_i(a) $ and $ \sigma_{i,j}(a) $ as the coefficient of $ x_j $ in $ \delta_i(a) $ for $ j<i $ (note that necessarily $ \sigma_i(\mathbb{F}) \subseteq \mathbb{F} $, for $ i = 1,2, \ldots, n $). Use for instance the universal property, as explained in Section \ref{sec matrix morphisms}. Examples include those in Remark \ref{remark why not iterated skew polynomials} or \cite[Ex. 2.3]{skewRM}, and important rings such as Weyl algebras \cite{galligo-diff} or solvable iterated skew polynomial rings \cite{kandri, zhangPBW}. In particular, when evaluation can be given for such iterated skew polynomials by unique remainder as in \cite{skewRM}, it must coincide with our notion of evaluation (Definition \ref{def standard evaluation}). What happens is that these iterated skew polynomial rings are in general quotients by a two-sided ideal $ J $ that satisfies that $ J \setminus I(\mathbb{F}^n) \neq \varnothing $ and $ I(\mathbb{F}^n) \setminus J \neq \varnothing $.
\end{remark}

\section{P-generators, P-independence and P-bases} \label{sec P-bases}

The main feature of P-closed sets is that they can be ``generated'' by certain subsets, called \textit{P-bases}, that control the possible values given by a function defined by a skew polynomial on such sets, as we will show in the next section. P-bases are given by a \textit{P-independence} notion and naturally form a matroid (Proposition \ref{prop matroid}). P-independence was defined for the case $ n=1 $ in \cite{lam, algebraic-conjugacy}. We start with the main definitions:

\begin{definition}[\textbf{P-generators}] \label{def P-generators}
Given a P-closed set $ \Omega \subseteq \mathbb{F}^n $, we say that $ \mathcal{G} \subseteq \Omega $ generates $ \Omega $ if $ \overline{\mathcal{G}} = \Omega $, and it is then called a set of P-generators for $ \Omega $. We say that $ \Omega $ is finitely generated if it has a finite set of P-generators.
\end{definition}

\begin{definition}[\textbf{P-independence}]
We say that $ \mathbf{a} \in \mathbb{F}^n $ is P-independent from $ \Omega \subseteq \mathbb{F}^n $ if it does not belong to $ \overline{\Omega} $. 

A set $ \Omega \subseteq \mathbb{F}^n $ is called P-independent if every $ \mathbf{a} \in \Omega $ is P-independent from $ \Omega \setminus \{ \mathbf{a} \} $. P-dependent means not P-independent.
\end{definition}

\begin{definition}[\textbf{P-bases}]
Given a P-closed set $ \Omega \subseteq \mathbb{F}^n $, we say that a subset $ \mathcal{B} \subseteq \Omega $ is a P-basis of $ \Omega $ if it is P-independent and $ \overline{\mathcal{B}} = \Omega $.
\end{definition}

The following is the main result of this section, where Item 3 will be crucial in order to perform Lagrange interpolation recursively.

\begin{proposition} \label{prop characterizations P-bases}
Given sets $ \mathcal{B} \subseteq \Omega \subseteq \mathbb{F}^n $, where $ \Omega = \overline{\mathcal{B}} $, the following are equivalent:
\begin{enumerate}
\item
$ \mathcal{B} $ is a P-basis of $ \Omega $.
\item
If $ \mathcal{G} \subseteq \mathcal{B} $ and $ \overline{\mathcal{G}} = \Omega $, then $ \mathcal{G} = \mathcal{B} $. That is, $ \mathcal{B} $ is a minimal set of P-generators of $ \Omega $.
\item
(If $ \mathcal{B} $ is finite) For any ordering $ \mathbf{b}_1, \mathbf{b}_2, \ldots, \mathbf{b}_M $ of the elements in $ \mathcal{B} $ and for $ i = 0,1,2, \ldots, M-1 $, it holds that $ \mathbf{b}_{i+1} $ is P-independent from $ \mathcal{B}_i = \{ \mathbf{b}_1, \mathbf{b}_2, \ldots, $ $ \mathbf{b}_i \} $, where $ \mathcal{B}_0 = \varnothing $.
\end{enumerate}
\end{proposition}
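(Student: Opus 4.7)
The plan is to treat the P-closure as a closure operator on $\mathbb{F}^n$ and reduce the three characterizations to standard manipulations with it. The essential properties I will use throughout are monotonicity ($\Omega_1 \subseteq \Omega_2 \Rightarrow \overline{\Omega_1} \subseteq \overline{\Omega_2}$), idempotence ($\overline{\overline{\Omega}} = \overline{\Omega}$), and extensiveness ($\Omega \subseteq \overline{\Omega}$), all of which are immediate from items 4, 8 and 9 of Proposition \ref{prop properties of zeros}. I will prove $(1) \Leftrightarrow (2)$ in general and then $(1) \Leftrightarrow (3)$ under the finiteness hypothesis.

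For $(1) \Rightarrow (2)$, I would argue by contradiction: if $\mathcal{G} \subsetneq \mathcal{B}$ with $\overline{\mathcal{G}} = \Omega$, pick any $\mathbf{b} \in \mathcal{B} \setminus \mathcal{G}$. Then $\mathcal{G} \subseteq \mathcal{B} \setminus \{\mathbf{b}\}$, and monotonicity gives $\mathbf{b} \in \Omega = \overline{\mathcal{G}} \subseteq \overline{\mathcal{B} \setminus \{\mathbf{b}\}}$, contradicting the P-independence of $\mathcal{B}$. Conversely for $(2) \Rightarrow (1)$, suppose $\mathcal{B}$ is not P-independent; then some $\mathbf{b} \in \mathcal{B}$ lies in $\overline{\mathcal{B} \setminus \{\mathbf{b}\}}$. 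Setting $\mathcal{G} = \mathcal{B} \setminus \{\mathbf{b}\}$, I get $\mathcal{B} \subseteq \overline{\mathcal{G}}$, so by monotonicity and idempotence $\Omega = \overline{\mathcal{B}} \subseteq \overline{\overline{\mathcal{G}}} = \overline{\mathcal{G}} \subseteq \Omega$, so $\overline{\mathcal{G}} = \Omega$, contradicting minimality.

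For $(1) \Rightarrow (3)$, given any ordering $\mathbf{b}_1, \ldots, \mathbf{b}_M$ of $\mathcal{B}$ and any $i$, the inclusion $\mathcal{B}_i \subseteq \mathcal{B} \setminus \{\mathbf{b}_{i+1}\}$ together with monotonicity yields $\overline{\mathcal{B}_i} \subseteq \overline{\mathcal{B} \setminus \{\mathbf{b}_{i+1}\}}$, and P-independence of $\mathcal{B}$ places $\mathbf{b}_{i+1}$ outside the larger set, hence outside $\overline{\mathcal{B}_i}$. For $(3) \Rightarrow (1)$, to verify P-independence of $\mathcal{B}$ I would pick an arbitrary $\mathbf{b} \in \mathcal{B}$ and exploit the freedom to reorder: choose an enumeration in which $\mathbf{b}_M = \mathbf{b}$, so that $\mathcal{B}_{M-1} = \mathcal{B} \setminus \{\mathbf{b}\}$; then condition (3) applied at $i = M-1$ gives exactly $\mathbf{b} \notin \overline{\mathcal{B} \setminus \{\mathbf{b}\}}$, which is what P-independence requires.

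The proof is essentially a routine closure-operator argument and I do not anticipate any substantive obstacle; the only subtlety is that $(3)$ quantifies over all orderings, so in $(3) \Rightarrow (1)$ one really needs that the orderings can be chosen to place any prescribed element last, which is why the statement quantifies universally. No appeal to the ring structure of $\mathbb{F}[\mathbf{x};\sigma,\delta]$, to the product rule, or to finiteness of $\mathbb{F}$ is needed beyond what is already encoded in Proposition \ref{prop properties of zeros}.
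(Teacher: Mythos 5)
Your proof is correct and follows essentially the same route as the paper's: all four implications are established by the same one-line closure-operator arguments (monotonicity of $\overline{\,\cdot\,}$ applied to the inclusions $\mathcal{G}\subseteq\mathcal{B}\setminus\{\mathbf{b}\}$ and $\mathcal{B}_i\subseteq\mathcal{B}\setminus\{\mathbf{b}_{i+1}\}$, and the freedom in $(3)$ to place any prescribed element last). The only cosmetic difference is that you phrase some steps directly where the paper argues by contrapositive, and you spell out the appeal to items 4, 8, 9 of Proposition \ref{prop properties of zeros} that the paper leaves implicit.
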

\begin{proof}
We prove each implication separately:

$ 1) \Longrightarrow 2) $: Assume that there exists $ \mathcal{G} \subsetneqq \mathcal{B} $ with $ \overline{\mathcal{G}} = \Omega $ and let $ \mathbf{a} \in \mathcal{B} \setminus \mathcal{G} $. Then 
$$ \mathbf{a} \in \Omega = \overline{\mathcal{G}} = \overline{\mathcal{B} \setminus \{ \mathbf{a} \}}, $$
hence Item 1 does not hold. 

$ 2) \Longrightarrow 1) $: Assume that $ \mathcal{B} $ is not P-independent and take $ \mathbf{a} \in \mathcal{B} $ with $ \mathbf{a} \in \overline{\mathcal{B} \setminus \{ \mathbf{a} \}} $. Define $ \mathcal{G} = \mathcal{B} \setminus \{ \mathbf{a} \} \subsetneqq \mathcal{B} $. It holds that
$$ \mathcal{B} = \{ \mathbf{a} \} \cup (\mathcal{B} \setminus \{ \mathbf{a} \}) \subseteq \overline{\mathcal{G}}, $$
hence $ \overline{\mathcal{G}} = \Omega $ and Item 2 does not hold.

$ 1) \Longrightarrow 3) $: Assume that $ \mathbf{b}_{i+1} $ is P-dependent from $ \mathcal{B}_i $ for a given $ i $ and a given ordering of $ \mathcal{B} $. Then 
$$ \mathbf{b}_{i+1} \in \overline{\mathcal{B}_i} \subseteq \overline{\mathcal{B} \setminus \{ \mathbf{b}_{i+1} \}}, $$
hence Item 1 does not hold.

$ 3) \Longrightarrow 1) $: Assume that $ \mathbf{a} $ is P-dependent from $ \mathcal{B} \setminus \{ \mathbf{a} \} $ and order the $ M $ elements in $ \mathcal{B} $ in such a way that $ \mathbf{b}_M = \mathbf{a} $. Then $ \mathbf{b}_M $ is P-dependent from $ \mathcal{B}_{M-1} $ and Item 3 does not hold.
\end{proof}

We have the following important immediate consequence of Item 2 in the previous proposition:

\begin{corollary} \label{corollary finite basis}
If a P-closed set is finitely generated, then it admits a finite P-basis.
\end{corollary}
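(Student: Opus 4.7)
The plan is to invoke the characterization of P-bases as \emph{minimal sets of P-generators} given by Item 2 of Proposition \ref{prop characterizations P-bases}. The whole argument is a finite-set minimality extraction, and no new ingredients beyond what is already in the excerpt are required.

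More concretely: let $\Omega \subseteq \mathbb{F}^n$ be P-closed and assume it is finitely generated, so there exists a finite set $\mathcal{G} \subseteq \Omega$ with $\overline{\mathcal{G}} = \Omega$. The collection
$$ \mathcal{C} = \{ \mathcal{H} \subseteq \mathcal{G} \mid \overline{\mathcal{H}} = \Omega \} $$
is nonempty (it contains $\mathcal{G}$) and, being a collection of subsets of a finite set, is itself finite. Pick $\mathcal{B} \in \mathcal{C}$ of minimum cardinality.

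Now I would verify that $\mathcal{B}$ is a P-basis of $\Omega$ using Item 2 of Proposition \ref{prop characterizations P-bases}. By construction, $\mathcal{B} \subseteq \Omega$ and $\overline{\mathcal{B}} = \Omega$. Suppose $\mathcal{H} \subseteq \mathcal{B}$ satisfies $\overline{\mathcal{H}} = \Omega$. Then $\mathcal{H} \subseteq \mathcal{B} \subseteq \mathcal{G}$, so $\mathcal{H} \in \mathcal{C}$, and the minimality of $|\mathcal{B}|$ forces $|\mathcal{H}| = |\mathcal{B}|$, whence $\mathcal{H} = \mathcal{B}$. Thus $\mathcal{B}$ is a minimal set of P-generators of $\Omega$, and by Proposition \ref{prop characterizations P-bases}, Item 2, it is a P-basis. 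Since $\mathcal{B} \subseteq \mathcal{G}$ and $\mathcal{G}$ is finite, $\mathcal{B}$ is finite, as required.

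There is no real obstacle here: the only nontrivial input is Proposition \ref{prop characterizations P-bases}, and the corollary is essentially the statement that within a finite P-generating set one can always extract a subset of minimum cardinality that still P-generates. The argument would break down in the non–finitely-generated setting (where one would need a Zorn-type argument on the P-closure operator), but under the finiteness hypothesis the extraction is purely combinatorial.
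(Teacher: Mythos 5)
Your argument is correct, and it is the same one the paper intends: the paper states the corollary as an "immediate consequence" of Item~2 of Proposition~\ref{prop characterizations P-bases}, and your extraction of a minimum-cardinality P-generating subset of a given finite generating set is exactly the routine step being suppressed there.
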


Finally, we observe that the family of P-independent sets forms a matroid \cite[Sec. 1.1]{oxley} whose bases \cite[Sec. 1.2]{oxley} are precisely the family of P-bases. The proof requires results from the following sections, but we will state the observation in this section for clarity.

\begin{proposition} \label{prop matroid}
For every finitely generated P-closed set $ \Omega \subseteq \mathbb{F}^n $, the pair $ (\mathcal{P}(\Omega), \mathcal{I}_\Omega) $ forms a matroid, where $ \mathcal{P}(\Omega) $ is the collection of all subsets of $ \Omega $, and $ \mathcal{I}_\Omega $ is the collection of P-independent subsets of $ \Omega $. Furthermore, the bases of the matroid $ (\mathcal{P}(\Omega), \mathcal{I}_\Omega) $ are precisely the P-bases of $ \Omega $.
\end{proposition}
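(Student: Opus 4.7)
The plan is to verify the three standard independence axioms of a matroid — (I1) $\emptyset$ is independent, (I2) heredity, (I3) augmentation — and then to identify the bases of the resulting matroid with the P-bases of $\Omega$. Axiom (I1) is vacuous, and axiom (I2) is straightforward: if $A$ is P-independent and $B \subseteq A$, then for any $\mathbf{b} \in B$ we have $B \setminus \{\mathbf{b}\} \subseteq A \setminus \{\mathbf{b}\}$, so Items 4 and 5 of Proposition \ref{prop properties of zeros} yield $\overline{B \setminus \{\mathbf{b}\}} \subseteq \overline{A \setminus \{\mathbf{b}\}}$, hence $\mathbf{b} \notin \overline{B \setminus \{\mathbf{b}\}}$.

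The main obstacle is axiom (I3): given finite P-independent subsets $A, B \subseteq \Omega$ with $|A| < |B|$, one must produce $\mathbf{b} \in B \setminus A$ such that $A \cup \{\mathbf{b}\}$ is P-independent. I would argue by contradiction, invoking the results to be proven in the following sections. Suppose that for every $\mathbf{b} \in B \setminus A$, the set $A \cup \{\mathbf{b}\}$ is P-dependent; combined with the P-independence of $A$, this forces $\mathbf{b} \in \overline{A}$, so that $B \subseteq \overline{A}$. Now consider the left $\mathbb{F}$-linear map
\[
E_B : \mathbb{F}[\mathbf{x}; \sigma, \delta] \longrightarrow \mathbb{F}^B, \qquad F \longmapsto (F(\mathbf{b}))_{\mathbf{b} \in B}.
\]
By Lagrange interpolation (Theorem \ref{th lagrange interpolation}) applied to the P-independent set $B$, this map is surjective, so its image has $\mathbb{F}$-dimension $|B|$. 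On the other hand, since $B \subseteq \overline{A}$, any two skew polynomials that agree as functions on $\overline{A}$ also agree on $B$, so $E_B$ factors through the space of skew-polynomial functions on $\overline{A}$, which by Theorem \ref{th describing evaluation as vector space} (with $A$ serving as a P-basis of $\overline{A}$) has $\mathbb{F}$-dimension $|A|$. Thus $|B| \leq |A|$, contradicting $|A| < |B|$.

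Finally, to identify the matroid bases with P-bases: if $\mathcal{B}$ is a P-basis, then every $\mathbf{a} \in \Omega \setminus \mathcal{B}$ belongs to $\overline{\mathcal{B}} = \Omega$, so $\mathcal{B} \cup \{\mathbf{a}\}$ is P-dependent and $\mathcal{B}$ is maximal P-independent. Conversely, if $\mathcal{B}$ is maximal P-independent in $\Omega$, then every $\mathbf{a} \in \Omega \setminus \mathcal{B}$ must lie in $\overline{\mathcal{B}}$ (otherwise, by an argument analogous to that in Proposition \ref{prop characterizations P-bases}, $\mathcal{B} \cup \{\mathbf{a}\}$ would still be P-independent), so $\Omega \subseteq \overline{\mathcal{B}} \subseteq \overline{\Omega} = \Omega$, giving $\overline{\mathcal{B}} = \Omega$ and hence $\mathcal{B}$ is a P-basis. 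The finite-generation hypothesis on $\Omega$ is essential precisely because it is the regime in which Theorem \ref{th describing evaluation as vector space} supplies a finite well-defined dimension, from which the rank count in (I3) derives; this is also why the author defers the proof until after those results are available.
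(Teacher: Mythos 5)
Your verification of (I1) and (I2) is fine, and your dimension argument via Lagrange interpolation and Theorem \ref{th describing evaluation as vector space} is in the same spirit as the paper's (the paper defers the whole proposition to Lemma \ref{lemma adding one to P-independent} in Section \ref{sec image and ker}, which is proven by similar dimension counting). However, there is a genuine gap in your verification of (I3): the step ``$A$ is P-independent and $A \cup \{\mathbf{b}\}$ is P-dependent, hence $\mathbf{b} \in \overline{A}$'' is \emph{not} a formal consequence of the definitions. P-dependence of $A \cup \{\mathbf{b}\}$ only says that \emph{some} element of $A \cup \{\mathbf{b}\}$ lies in the P-closure of the rest; if that element is some $\mathbf{c} \in A$, you get $\mathbf{c} \in \overline{(A \setminus \{\mathbf{c}\}) \cup \{\mathbf{b}\}}$, which does not directly yield $\mathbf{b} \in \overline{A}$. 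Indeed, for a general closure operator on a finite set this implication is false; passing from one to the other is a Steinitz-type exchange step, and proving it is essentially equivalent to proving the augmentation axiom you are trying to establish. The same issue appears in your final paragraph, where you cite ``an argument analogous to that in Proposition \ref{prop characterizations P-bases}'' to get ``$\mathbf{a} \notin \overline{\mathcal{B}}$ implies $\mathcal{B} \cup \{\mathbf{a}\}$ P-independent'': nothing in that proposition delivers this.

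The missing ingredient is exactly the first statement of Lemma \ref{lemma adding one to P-independent}, which is what the paper's proof invokes: if $\mathcal{B}$ is finite and P-independent and $\mathbf{a} \notin \overline{\mathcal{B}}$, then $\mathcal{B} \cup \{\mathbf{a}\}$ is P-independent. Your argument can be repaired by reordering: first show (by your dimension count) that if $B \subseteq \overline{A}$ then $|B| \leq |A|$, so some $\mathbf{b} \in B$ satisfies $\mathbf{b} \notin \overline{A}$ (necessarily $\mathbf{b} \notin A$); then invoke the exchange lemma to conclude $A \cup \{\mathbf{b}\}$ is P-independent. Alternatively, the exchange lemma itself admits a short direct proof using only left linearity of evaluation: given $\mathbf{c} \in A$, $D = A \setminus \{\mathbf{c}\}$, pick $G \in I(D)$ with $G(\mathbf{c}) \neq 0$ and $H \in I(A)$ with $H(\mathbf{b}) \neq 0$; then $F = G - G(\mathbf{b}) H(\mathbf{b})^{-1} H$ lies in $I(D \cup \{\mathbf{b}\})$ yet $F(\mathbf{c}) = G(\mathbf{c}) \neq 0$, so $\mathbf{c} \notin \overline{D \cup \{\mathbf{b}\}}$. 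Either route closes the gap; as written, the proposal silently assumes the very exchange property it needs to prove.
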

\begin{proof}
First, it is trivial to see that $ \varnothing \in \mathcal{I}_\Omega $ and, if $ \mathcal{A}^\prime \subseteq \mathcal{A} $ and $ \mathcal{A} \in \mathcal{I}_\Omega $, then $ \mathcal{A}^\prime \in \mathcal{I}_\Omega $. The augmentation property of matroids is the first statement in Lemma \ref{lemma adding one to P-independent}, proven in Section \ref{sec image and ker}. Finally, the fact that bases (that is, maximal independent sets) and P-bases coincide is the second statement in Lemma \ref{lemma adding one to P-independent}.
\end{proof}

\section{Skew polynomial functions and Lagrange interpolation} \label{sec lagrange interpolation}

In this section, we give the main result of this paper. We show, in a Lagrange-type interpolation theorem, what values a function given by a skew polynomial can take when evaluated on a finitely generated P-closed set (Theorem \ref{th lagrange interpolation}). This result will be crucial in the following sections to describe the image and kernel of the evaluation map defined below (Theorem \ref{th describing evaluation as vector space}), to prove later that P-independent sets form a matroid (Lemma \ref{lemma adding one to P-independent}) and that the rank of a P-closed set is the rank of the corresponding skew Vandermonde matrix (Proposition \ref{prop rank vandermonde}). On the way, we derive other important results on P-closed sets.

Observe first that the total $ (\sigma,\delta) $-evaluation gives a left linear map
\begin{equation}
E^{\sigma,\delta}_{\Omega} : \mathbb{F}[\mathbf{x}; \sigma, \delta] \longrightarrow \mathbb{F}^{\Omega},
\label{eq restricted total evaluation}
\end{equation}
when restricted to evaluating over a subset $ \Omega \subseteq \mathbb{F}^n $ or, in other words, by composing $ E^{\sigma,\delta}_{\Omega} = \pi_\Omega \circ E^{\sigma,\delta} $, where $ \pi_\Omega : \mathbb{F}^{\mathbb{F}^n} \longrightarrow \mathbb{F}^\Omega $ is the canonical projection map.

Hence $ E^{\sigma, \delta}_{\Omega} $ gives a correspondence between multivariate skew polynomials $ F \in \mathbb{F}[\mathbf{x}; \sigma, \delta] $ and some particular functions $ f = E^{\sigma, \delta}_{\Omega}(F) : \Omega \longrightarrow \mathbb{F} $. Such functions will be called \textit{multivariate skew polynomial functions} over $ \Omega $.

Formally, the objective of this section and the next one is to describe the kernel and image of the map $ E^{\sigma, \delta}_{\Omega} $ when $ \Omega $ is P-closed and finitely generated. We start with the following lemma, which is a key tool in Lagrange interpolation:

\begin{lemma} \label{lemma previous to lagrange}
Let $ \mathcal{B} \subseteq \mathbb{F}^n $ be a finite P-independent set and let $ \mathbf{b} \notin \overline{\mathcal{B}} $. There exists $ F \in I(\mathcal{B}) \setminus I(\mathcal{B} \cup \{ \mathbf{b} \}) $ such that $ \deg(F) \leq \# \mathcal{B} $.
\end{lemma}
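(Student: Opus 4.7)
I proceed by induction on $M := \#\mathcal{B}$.

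The base case $M = 0$ is immediate: $\overline{\varnothing} = Z(I(\varnothing)) = Z((1)) = \varnothing$ by Proposition~\ref{prop properties of zeros}, so the hypothesis $\mathbf{b} \notin \overline{\mathcal{B}}$ is vacuous and the constant polynomial $F = 1$ (of degree $0$) satisfies $F(\mathbf{b}) = N_1(\mathbf{b}) = 1 \neq 0$.

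For the inductive step, fix $\mathcal{B}$ P-independent with $\#\mathcal{B} = M \geq 1$ and assume the statement for smaller sizes. Pick any $\mathbf{b}_0 \in \mathcal{B}$ and set $\mathcal{B}' := \mathcal{B} \setminus \{\mathbf{b}_0\}$, which is P-independent of size $M - 1$. Since $\overline{\mathcal{B}'} \subseteq \overline{\mathcal{B}}$ and $\mathbf{b} \notin \overline{\mathcal{B}}$, also $\mathbf{b} \notin \overline{\mathcal{B}'}$, and the induction hypothesis yields $G \in I(\mathcal{B}')$ of degree at most $M-1$ with $G(\mathbf{b}) \neq 0$. If $G(\mathbf{b}_0) = 0$ then $G \in I(\mathcal{B})$ already and we are done. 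Otherwise set $c := G(\mathbf{b}_0) \in \mathbb{F}^*$ and $d := G(\mathbf{b}) \in \mathbb{F}^*$, and try
\[
F := (x_j - a_j)\, G.
\]
The product rule (Theorem~\ref{th product rule}), combined with $N_{x_j}(\mathbf{a}) = a_j$ from Theorem~\ref{th fundamental functions} and left linearity of evaluation, gives
\[
F(\mathbf{b}_0) = \bigl((\mathbf{b}_0^c)_j - a_j\bigr)\,c, \qquad F(\mathbf{b}) = \bigl((\mathbf{b}^d)_j - a_j\bigr)\,d.
\]
Choosing $a_j := (\mathbf{b}_0^c)_j$ kills the first, and the second then equals $\bigl((\mathbf{b}^d)_j - (\mathbf{b}_0^c)_j\bigr)d$, which is nonzero for some coordinate $j$ as soon as $\mathbf{b}^d \neq \mathbf{b}_0^c$. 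Since $F$ vanishes on $\mathcal{B}'$ automatically (again by the product rule, as $G \in I(\mathcal{B}')$), we obtain $F \in I(\mathcal{B})$ with $F(\mathbf{b}) \neq 0$ and $\deg F \leq \deg G + 1 \leq M$, completing the argument in this generic case.

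\textbf{The main obstacle} is the \emph{conjugacy obstruction} $\mathbf{b}^d = \mathbf{b}_0^c$, in which every coordinate matches and the construction above fails identically. To address it I would apply the induction hypothesis a second time, now to the pair $(\mathcal{B}', \mathbf{b}_0)$ --- legal because $\mathbf{b}_0 \notin \overline{\mathcal{B}'}$ by the P-independence of $\mathcal{B}$ --- to produce an auxiliary $H \in I(\mathcal{B}')$ of degree $\leq M-1$ with $h_0 := H(\mathbf{b}_0) \neq 0$. The left scalar combination $G - (ch_0^{-1})H$ then lies in $I(\mathcal{B})$ at degree $\leq M-1$ and evaluates at $\mathbf{b}$ to $d - ch_0^{-1} H(\mathbf{b})$; if this is nonzero we finish at degree $\leq M-1$. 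A short computation with the definition of conjugacy and the relation $(\mathbf{a}^c)^d = \mathbf{a}^{dc}$ shows that in the residual doubly-pathological scenario one is forced into the parallel identity $\mathbf{b}^{H(\mathbf{b})} = \mathbf{b}_0^{h_0}$, and the resolution is to combine the two ideas into a hybrid polynomial of the form $(x_j - a_j)\, G - K\, H$, where $K$ is chosen to cancel the value at $\mathbf{b}_0$ while the freedom in selecting $j$ is used to force a nonzero value at $\mathbf{b}$. Verifying that such a hybrid can always be realized within the degree bound $M$, in the presence of both conjugacy identities simultaneously, is the genuinely subtle step and I expect it to be the real technical heart of the lemma.
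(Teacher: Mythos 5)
You identify the right pathology (the conjugacy obstruction), and your base case and the two ``easy'' subcases ($G(\mathbf{b}_0)=0$, or $\mathbf{b}^d\neq\mathbf{b}_0^c$) are handled correctly, but the hybrid you propose does not close the remaining case, so there is a genuine gap at exactly the step you flag. In the doubly-pathological scenario one has $\mathbf{b}^d = \mathbf{b}_0^c$ and $H(\mathbf{b}) = h_0 c^{-1} d$. Choosing $K = \bigl((\mathbf{b}_0^c)_j - a_j\bigr) c h_0^{-1}$ to kill the value of $(x_j-a_j)G - KH$ at $\mathbf{b}_0$, the product rule gives its value at $\mathbf{b}$ as
\[
\bigl((\mathbf{b}_0^c)_j - a_j\bigr) d \;-\; \bigl((\mathbf{b}_0^c)_j - a_j\bigr) c h_0^{-1}\cdot h_0 c^{-1} d \;=\; 0,
\]
for every $j$ and every $a_j$. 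Replacing the scalar $K$ with $\lambda(x_k-a_k)$ fails for the same reason, since the parallel identity $\mathbf{b}^{H(\mathbf{b})}=\mathbf{b}_0^{h_0}$ forces $(KH)(\mathbf{b}) = (KH)(\mathbf{b}_0)\,c^{-1}d$ as well. The underlying difficulty is structural: a left factor of degree one only ``sees'' the two points through the coinciding conjugates $\mathbf{b}^d=\mathbf{b}_0^c$, so no combination built from $G$, $H$, scalars and degree-one left multipliers can separate $\mathbf{b}$ from $\mathbf{b}_0$ once both identities hold, and the induction hypothesis gives you no control over the ratio $G(\mathbf{b}_0)^{-1}G(\mathbf{b})$ that you would need to vary to escape it.

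The paper avoids the issue entirely by going top-down rather than bottom-up. It first shows $I(\mathcal{B}) \setminus I(\mathcal{B}\cup\{\mathbf{b}\}) \neq \varnothing$ non-constructively (if the two ideals were equal, then $\overline{\mathcal{B}}=Z(I(\mathcal{B}\cup\{\mathbf{b}\}))\supseteq\mathcal{B}\cup\{\mathbf{b}\}$ would force $\mathbf{b}\in\overline{\mathcal{B}}$), and then picks $F$ in this difference with minimal leading monomial for a degree-compatible order. If $\deg F \geq M+1$, write $\mathrm{LM}(F) = \mathfrak{m}\,x_{i_1}\cdots x_{i_{M+1}}$; the product rule lets one choose $a_1,\ldots,a_{M+1}$ so that $G=\mathfrak{m}(x_{i_1}-a_1)\cdots(x_{i_{M+1}}-a_{M+1})$ vanishes at all $M$ points of $\mathcal{B}$ \emph{and} at $\mathbf{b}$. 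Since $\mathrm{LM}(G)=\mathrm{LM}(F)$ and $G$ has leading coefficient $1$, the skew polynomial $F-aG$ (with $a$ the leading coefficient of $F$) still lies in $I(\mathcal{B})\setminus I(\mathcal{B}\cup\{\mathbf{b}\})$ but has strictly smaller leading monomial, contradicting minimality. The degree bound thus follows by descent, with conjugacy never arising as an obstruction because $G$ is engineered to vanish at $\mathbf{b}$ too, rather than being asked to separate $\mathbf{b}$ from $\mathcal{B}$. To rescue your constructive approach you would need an independent mechanism to exhibit a $G\in I(\mathcal{B}')$ of degree $\leq M-1$ with a prescribed (or at least variable) ratio $G(\mathbf{b}_0)^{-1}G(\mathbf{b})$, and this is not supplied by the induction hypothesis alone.
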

\begin{proof}
First we prove that $ I(\mathcal{B}) \setminus I(\mathcal{B} \cup \{ \mathbf{b} \}) \neq \varnothing $. Assume the opposite. Then
$$ \overline{\mathcal{B}} = Z(I(\mathcal{B})) = Z(I(\mathcal{B} \cup \{ \mathbf{b} \})), $$
and $ \mathcal{B} \cup \{ \mathbf{b} \} \subseteq Z(I(\mathcal{B} \cup \{ \mathbf{b} \})) $ by Item 8 in Proposition \ref{prop properties of zeros}. Thus $ \mathbf{b} \in \overline{\mathcal{B}} $, which is a contradiction.

Now let $ \mathcal{B} = \{ \mathbf{b}_1, \mathbf{b}_2, \ldots, \mathbf{b}_M \} $ with $ M = \# \mathcal{B} $, let $ \prec $ be any ordering of $ \mathcal{M} $ preserving degrees, and take $ F \in I(\mathcal{B}) \setminus I(\mathcal{B} \cup \{ \mathbf{b} \}) $ such that $ {\rm LM}(F) $ is minimum possible with respect to $ \prec $. Assume that $ \deg(F) \geq M+1 $, which implies that $ \deg({\rm LM}(F)) \geq M+1 $ by the choice of the ordering $ \prec $.

Let $ {\rm LM}(F) = \mathfrak{m} x_{i_1} x_{i_2} \cdots x_{i_{M+1}} $, for some $ \mathfrak{m} \in \mathcal{M} $. By the product rule (Theorem \ref{th product rule}), we may choose elements $ a_1, a_2, \ldots, a_{M+1} \in \mathbb{F} $ such that
$$ G = \mathfrak{m} (x_{i_1} - a_1) (x_{i_2} - a_2) \cdots (x_{i_{M+1}} - a_{M+1}) $$
satisfies that $ G(\mathbf{b}_i) = 0 $, for $ i = 1,2, \ldots, M+1 $, denoting $ \mathbf{b}_{M+1} = \mathbf{b} $. In particular, there exists $ a \in \mathbb{F} $ such that $ H = F - aG $ satisfies $ {\rm LM}(H) \prec {\rm LM}(F) $, since $ {\rm LM}(F) = {\rm LM}(G) $. Now, by the definition of $ G $, it holds that
$$ H = F - aG \in I(\mathcal{B}) \setminus I(\mathcal{B} \cup \{ \mathbf{b} \}), $$
which is absurd by the minimality of $ {\rm LM}(F) $. Therefore $ \deg(F) \leq M $ and we are done.
\end{proof}

\begin{remark}
Note that, to construct $ G $ in the previous proof, we are implicitly using that $ \mathbb{F} $ is a division ring and we are implicitly applying Theorem \ref{th product rule} in its full form. Note however that constructing $ G $ is only needed to ensure the bound $ \deg(F) \leq \# \mathcal{B} $, but the existence of $ F $ still holds if $ \mathbb{F} $ is an arbitrary ring. The bound on $ \deg(F) $ will only be used to define skew Vandermonde matrices in Section \ref{sec skew vandermonde}. We do not investigate the full validity of Lemma \ref{lemma previous to lagrange} when $ \mathbb{F} $ is an arbitrary ring.
\end{remark}

The main result of this section is a Lagrange-type interpolation theorem in $ \mathbb{F}[\mathbf{x}; \sigma, \delta] $, whose proof is given by an iterative Newton-type algorithm thanks to Item 3 in Proposition \ref{prop characterizations P-bases}. This result extends the case $ n = 1 $ given in \cite[Th. 8]{lam} (see also the beginning of \cite[Sec. 5]{lam-leroy}). Newton-type iterative algorithms have been given in \cite{zhang} for univariate skew polynomials, and in \cite{skew-interpolation} for their free left modules.

\begin{theorem}[\textbf{Lagrange interpolation}] \label{th lagrange interpolation}
Let $ \Omega \subseteq \mathbb{F}^n $ be a finitely generated P-closed set with finite P-basis $ \mathcal{B} = \{ \mathbf{b}_1, \mathbf{b}_2, \ldots, \mathbf{b}_M \} $. The following hold:
\begin{enumerate}
\item
If $ E^{\sigma, \delta}_{\mathcal{B}}(F) = E^{\sigma, \delta}_{\mathcal{B}}(G) $, then $ E^{\sigma, \delta}_{\Omega}(F) = E^{\sigma, \delta}_{\Omega}(G) $, for all $ F,G \in \mathbb{F}[\mathbf{x}; \sigma, \delta] $. That is, the values of a skew polynomial function $ f : \Omega \longrightarrow \mathbb{F} $ are uniquely given by $ f(\mathbf{b}_1), f(\mathbf{b}_2), \ldots, f(\mathbf{b}_M) $.
\item
For every $ a_1, a_2, \ldots, a_M \in \mathbb{F} $, there exists $ F \in \mathbb{F}[\mathbf{x}; \sigma, \delta] $ such that $ \deg(F) < M $ and $ F(\mathbf{b}_i) = a_i $, for $ i = 1,2, \ldots, M $.
\end{enumerate}
\end{theorem}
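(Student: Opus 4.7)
The plan is to handle the two items separately: item 1 will follow immediately from the general facts about ideals of zeros established in Section \ref{sec zeros}, while item 2 calls for a Newton-type iterative construction driven by Lemma \ref{lemma previous to lagrange} and the ordering provided by Proposition \ref{prop characterizations P-bases}.

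For item 1, by left linearity of $E^{\sigma,\delta}_\Omega$ it suffices to show that if $F$ vanishes on $\mathcal{B}$ then $F$ vanishes on $\Omega$. But $F$ vanishing on $\mathcal{B}$ means $F \in I(\mathcal{B})$, and combining Items 8 and 9 of Proposition \ref{prop properties of zeros} gives $I(\mathcal{B}) = I(\overline{\mathcal{B}}) = I(\Omega)$, so $F$ vanishes on all of $\Omega$. Applying this to $F - G$ yields the claim.

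For item 2, I would first reorder $\mathcal{B}$ using Item 3 of Proposition \ref{prop characterizations P-bases} so that, writing $\mathcal{B}_i = \{\mathbf{b}_1, \ldots, \mathbf{b}_i\}$ with $\mathcal{B}_0 = \varnothing$, the point $\mathbf{b}_{i+1}$ is P-independent from $\mathcal{B}_i$ for each $i = 0, 1, \ldots, M-1$. Then I would build $F$ step by step: set $F_0 = 0$, and at step $i+1$ invoke Lemma \ref{lemma previous to lagrange} on the pair $(\mathcal{B}_i, \mathbf{b}_{i+1})$ to obtain a skew polynomial $G_{i+1} \in I(\mathcal{B}_i) \setminus I(\mathcal{B}_i \cup \{\mathbf{b}_{i+1}\})$ with $\deg(G_{i+1}) \leq i$. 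Since $c_{i+1} := G_{i+1}(\mathbf{b}_{i+1}) \in \mathbb{F}^*$, the update
$$ F_{i+1} = F_i + \bigl(a_{i+1} - F_i(\mathbf{b}_{i+1})\bigr) c_{i+1}^{-1} \, G_{i+1} $$
preserves the values $F_{i+1}(\mathbf{b}_j) = a_j$ for $j \leq i$ (because $G_{i+1}$ vanishes on $\mathcal{B}_i$) and fixes the value at $\mathbf{b}_{i+1}$ to be $a_{i+1}$. An easy induction shows $\deg(F_i) \leq i - 1$ throughout, so $F := F_M$ satisfies $\deg(F) < M$ and interpolates all the prescribed values.

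The conceptually nontrivial step is the existence of $G_{i+1}$ with the degree bound $\deg(G_{i+1}) \leq \#\mathcal{B}_i$, which is precisely the content of Lemma \ref{lemma previous to lagrange} and in turn rests on the product rule (Theorem \ref{th product rule}) and the fact that $\mathbb{F}$ is a division ring; without this degree control the iteration still produces an interpolating polynomial but with no bound on its degree. The remaining steps are essentially a bookkeeping exercise in Newton interpolation, with the ordering from Proposition \ref{prop characterizations P-bases} ensuring that the hypothesis of Lemma \ref{lemma previous to lagrange} is satisfied at every stage.
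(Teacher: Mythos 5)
Your proof is correct and follows essentially the same strategy as the paper: Item 1 is a direct consequence of the Galois-connection properties of $I$ and $Z$ from Proposition \ref{prop properties of zeros} (the paper phrases it as $\Omega = Z(I(\mathcal{B})) \subseteq Z(F)$, you phrase it as $I(\mathcal{B}) = I(\Omega)$, but these are the same argument), and Item 2 is the identical Newton-style recursion using the ordering from Item 3 of Proposition \ref{prop characterizations P-bases} and the degree-controlled polynomial $G$ from Lemma \ref{lemma previous to lagrange}, with the same update formula $F_{i+1} = F_i + (a_{i+1} - F_i(\mathbf{b}_{i+1})) G(\mathbf{b}_{i+1})^{-1} G$.
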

\begin{proof}
We prove each item separately.
\begin{enumerate}
\item
We just need to prove that $ E^{\sigma, \delta}_{\mathcal{B}}(F) = 0 $ implies $ E^{\sigma, \delta}_{\Omega}(F) = 0 $. By definition, $ \mathcal{B} \subseteq Z(F) $, and by Proposition \ref{prop properties of zeros}, it holds that $ I(Z(F)) \subseteq I(\mathcal{B}) $ and 
$$ \Omega = \overline{\mathcal{B}} = Z(I(\mathcal{B})) \subseteq Z(I(Z(F))) = Z(F), $$
and the result follows.
\item
Let $ \mathcal{B} = \{ \mathbf{b}_1, \mathbf{b}_2, \ldots, \mathbf{b}_M \} $ as in Proposition \ref{prop characterizations P-bases}, Item 3. We prove the result iteratively for each of the P-independent sets $ \mathcal{B}_i $, $ i = 1,2, \ldots, M $, as in Newton's algorithm.

We start by defining the skew polynomial $ F_1 = a_1 $, which obviously satisfies $ F_1(\mathbf{b}_1) = a_1 $ and $ \deg(F_1) < 1 $. Now assume that $ M > 1 $, $ 1 \leq i \leq M-1 $ and there exists a skew polynomial $ F_i $ such that $ F_i(\mathbf{b}_j) = a_j $, for $ j = 1,2, \ldots, i $, and $ \deg(F_i) < i $. By Lemma \ref{lemma previous to lagrange}, there exists
$$ G \in I(\{ \mathbf{b}_1, \mathbf{b}_2, \ldots, \mathbf{b}_i \}) \setminus I(\{ \mathbf{b}_1, \mathbf{b}_2, \ldots, \mathbf{b}_{i+1} \}) $$ 
such that $ \deg(G) < i+1 $. The skew polynomial 
$$ F_{i+1} = F_i + \left( a_{i+1} - F_i(\mathbf{b}_{i+1}) \right) G(\mathbf{b}_{i+1})^{-1} G $$
satisfies that $ F_{i+1}(\mathbf{b}_j) = a_j $, for $ j = 1,2, \ldots, i+1 $, and $ \deg(F_{i+1}) < i+1 $. 
\end{enumerate}
\end{proof}

In the rest of the section, we derive some important consequences of this theorem. We start with the concept of dual P-bases.

\begin{definition} [\textbf{Dual P-bases}]
Given a finite P-basis $ \mathcal{B} = \{ \mathbf{b}_1, \mathbf{b}_2, \ldots, \mathbf{b}_M \} $ of a P-closed set $ \Omega \subseteq \mathbb{F}^n $, we say that a set of skew polynomials 
$$ \mathcal{B}^* = \{ F_1, F_2, \ldots, F_M \} \subseteq \mathbb{F}[\mathbf{x}; \sigma, \delta] $$
is a dual P-basis of $ \mathcal{B} $ if $ F_i(\mathbf{b}_j) = \delta_{i,j} $ for all $ i,j = 1,2, \ldots, M $.
\end{definition}

We have the following immediate consequence of Theorem \ref{th lagrange interpolation} on the existence and uniqueness of dual P-bases:

\begin{corollary} \label{corollary existence dual P-bases}
Any finite P-basis, with $ M $ elements, of a P-closed set $ \Omega $ admits a dual P-basis consisting of $ M $ skew polynomials of degree less than $ M $. Moreover, any two dual P-bases of the same P-basis define the same skew polynomial functions over $ \Omega $.
\end{corollary}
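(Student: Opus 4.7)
The plan is to derive both statements directly from Theorem \ref{th lagrange interpolation}, since the corollary is essentially a rephrasing of Lagrange interpolation with the particular target values $\delta_{i,j}$.

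For the existence of a dual P-basis, I would fix $i \in \{1, 2, \ldots, M\}$ and apply Item 2 of Theorem \ref{th lagrange interpolation} with the target values $a_j = \delta_{i,j}$ for $j = 1, 2, \ldots, M$. This yields a skew polynomial $F_i \in \mathbb{F}[\mathbf{x}; \sigma, \delta]$ of degree less than $M$ such that $F_i(\mathbf{b}_j) = \delta_{i,j}$ for all $j$. Collecting these $F_i$ for $i = 1, \ldots, M$ produces the desired dual P-basis $\mathcal{B}^* = \{F_1, F_2, \ldots, F_M\}$, where every element has degree strictly less than $M$.

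For the uniqueness statement about induced functions, let $\mathcal{B}^* = \{F_1, \ldots, F_M\}$ and $\mathcal{B}^{**} = \{G_1, \ldots, G_M\}$ be two dual P-bases of the same P-basis $\mathcal{B}$. For each $i$, the defining property gives $F_i(\mathbf{b}_j) = \delta_{i,j} = G_i(\mathbf{b}_j)$ for all $j = 1, \ldots, M$, so by left linearity of $E^{\sigma,\delta}_{\mathcal{B}}$ we have $E^{\sigma,\delta}_{\mathcal{B}}(F_i) = E^{\sigma,\delta}_{\mathcal{B}}(G_i)$. Item 1 of Theorem \ref{th lagrange interpolation} then forces $E^{\sigma,\delta}_{\Omega}(F_i) = E^{\sigma,\delta}_{\Omega}(G_i)$, i.e., $F_i$ and $G_i$ induce the same function on $\Omega$.

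Since all the heavy lifting has already been done in Theorem \ref{th lagrange interpolation} (existence of interpolants with controlled degree, and the fact that agreement on a P-basis forces agreement on its P-closure), there is no real obstacle; the only thing to be careful about is simply to instantiate the interpolation theorem with the correct Kronecker-delta targets and to invoke both items of that theorem in the right order.
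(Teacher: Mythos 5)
Your proof is correct and follows exactly the route the paper intends: the paper states this corollary as an "immediate consequence" of Theorem~\ref{th lagrange interpolation} without spelling out the argument, and your instantiation of Item~2 with Kronecker-delta targets for existence and Item~1 for uniqueness of the induced functions is precisely that argument.
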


An important consequence of Theorem \ref{th lagrange interpolation} is the following result on the sizes of P-bases:

\begin{corollary} \label{corollary all P-bases same size}
Any two P-bases of a finitely generated P-closed set are finite and have the same number of elements.
\end{corollary}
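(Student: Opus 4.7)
The plan is to prove the common cardinality via a dimension count applied to the left $\mathbb{F}$-vector space of skew polynomial functions on $\Omega$, using both items of Theorem~\ref{th lagrange interpolation}. Fix a finite P-basis $\mathcal{B} = \{\mathbf{b}_1, \ldots, \mathbf{b}_M\}$ of $\Omega$, whose existence is guaranteed by Corollary~\ref{corollary finite basis}. The goal is to show that every P-independent subset $\mathcal{C} \subseteq \Omega$ has at most $M$ elements; the corollary then follows immediately, because an arbitrary P-basis $\mathcal{B}'$ of $\Omega$ is P-independent (hence finite with $|\mathcal{B}'| \le M$), and a symmetric application with $\mathcal{B}'$ in place of $\mathcal{B}$ gives $M \le |\mathcal{B}'|$.

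For the upper bound, first invoke Item~1 of Theorem~\ref{th lagrange interpolation}: any skew polynomial function on $\Omega$ is determined by its values on $\mathcal{B}$, so $E^{\sigma,\delta}_\Omega$ factors through the left $\mathbb{F}$-linear evaluation map $E^{\sigma,\delta}_{\mathcal{B}} : \mathbb{F}[\mathbf{x};\sigma,\delta] \to \mathbb{F}^M$. In particular, the image of $E^{\sigma,\delta}_\Omega$ in $\mathbb{F}^\Omega$ is a left $\mathbb{F}$-subspace of dimension at most $M$.

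Now let $\mathcal{C} = \{\mathbf{c}_1, \ldots, \mathbf{c}_k\} \subseteq \Omega$ be any finite P-independent set. Then $\mathcal{C}$ is a P-basis of $\overline{\mathcal{C}}$ (it generates $\overline{\mathcal{C}}$ by definition and is P-independent by hypothesis), so Item~2 of Theorem~\ref{th lagrange interpolation} applied to $\mathcal{C}$ yields, for every $(a_1, \ldots, a_k) \in \mathbb{F}^k$, some $F \in \mathbb{F}[\mathbf{x};\sigma,\delta]$ with $F(\mathbf{c}_i) = a_i$. Composing $E^{\sigma,\delta}_\Omega$ with the natural projection $\mathbb{F}^\Omega \to \mathbb{F}^{\mathcal{C}}$ recovers $E^{\sigma,\delta}_{\mathcal{C}}$, which is therefore surjective onto $\mathbb{F}^k$. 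Combined with the bound of the previous paragraph, this forces $k \le M$. An infinite P-independent set in $\Omega$ is ruled out because it would contain finite P-independent subsets of every size, contradicting this bound.

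The main subtlety, and the reason both items of Theorem~\ref{th lagrange interpolation} are needed, is the asymmetry of the argument: Item~1 is used at the known basis $\mathcal{B}$ to bound the left $\mathbb{F}$-dimension of skew polynomial functions on $\Omega$, whereas Item~2 is used at the arbitrary P-independent set $\mathcal{C}$ (viewed as a P-basis of $\overline{\mathcal{C}} \subseteq \Omega$) to turn that dimension bound into a cardinality bound on $\mathcal{C}$. Everything else is routine left-$\mathbb{F}$-linear algebra.
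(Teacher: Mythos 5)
Your argument is correct. It takes essentially the same route as the paper's proof -- a dimension count on the left $\mathbb{F}$-vector space $\operatorname{Im}(E^{\sigma,\delta}_{\Omega})$ -- but organizes it a bit differently. The paper fixes a finite P-basis $\mathcal{B}$ of size $M$, takes a dual P-basis $\{F_1,\dots,F_M\}$ (from Corollary~\ref{corollary existence dual P-bases}), and proves that $\{E^{\sigma,\delta}_\Omega(F_i)\}_{i=1}^M$ is actually a left basis of $\operatorname{Im}(E^{\sigma,\delta}_\Omega)$, so the dimension is exactly $M$; the finite and infinite cases are then treated separately, with the infinite case handled by the same projection trick you use. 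You instead use Item~1 of Theorem~\ref{th lagrange interpolation} only to get the inequality $\dim\operatorname{Im}(E^{\sigma,\delta}_\Omega)\le M$, and Item~2 only to get the surjection $\operatorname{Im}(E^{\sigma,\delta}_\Omega)\twoheadrightarrow\mathbb{F}^{\mathcal{C}}$ for any finite P-independent $\mathcal{C}\subseteq\Omega$, which together force $\#\mathcal{C}\le M$; both finiteness and equality then fall out uniformly by symmetry. What you lose is the explicit left basis of $\operatorname{Im}(E^{\sigma,\delta}_\Omega)$, which the paper reuses in the proof of Theorem~\ref{th describing evaluation as vector space}; what you gain is a shorter self-contained proof for this corollary that does not route through dual P-bases and handles the infinite case without a separate argument.
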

\begin{proof}
Given a finite P-basis $ \mathcal{B} = \{ \mathbf{b}_1, \mathbf{b}_2, \ldots, \mathbf{b}_M \} $ of size $ M $ of a P-closed set $ \Omega $, we will show first that $ {\rm Im}(E^{\sigma, \delta}_{\Omega}) $ is a left vector subspace of $ \mathbb{F}^{\Omega} $ with basis 
\begin{equation}
\left\lbrace E^{\sigma, \delta}_{\Omega}(F_1), E^{\sigma, \delta}_{\Omega}(F_2), \ldots, E^{\sigma, \delta}_{\Omega}(F_M) \right\rbrace ,
\label{eq dual P-basis is basis}
\end{equation}
for any dual P-basis $ \{ F_1, F_2, \ldots, F_M \} $ of $ \mathcal{B} $. Assume that there exist $ \lambda_1, \lambda_2, \ldots, \lambda_M \in \mathbb{F} $ such that $ \sum_{i=1}^M \lambda_i E_\Omega^{\sigma, \delta}(F_i) = 0 $. Defining $ F = \sum_{i=1}^M \lambda_i F_i $, it follows that
$$ E_\Omega^{\sigma, \delta}(F) = E_\Omega^{\sigma, \delta} \left( \sum_{i=1}^M \lambda_i F_i \right) = \sum_{i=1}^M \lambda_i E_\Omega^{\sigma, \delta}(F_i) = 0, $$
thus $ \lambda_i = F(\mathbf{b}_i) = 0 $, for $ i = 1,2, \ldots, M $, and the set in (\ref{eq dual P-basis is basis}) is left linearly independent. Now, given $ F \in \mathbb{F}[\mathbf{x}; \sigma, \delta] $, define $ G = F - \sum_{i=1}^M F(\mathbf{b}_i) F_i $. By definition, we have that $ E_\mathcal{B}^{\sigma, \delta}(G) = 0 $. Therefore $ E_\Omega^{\sigma, \delta}(G) = 0 $ by Theorem \ref{th lagrange interpolation}, thus 
$$ E_\Omega^{\sigma, \delta}(F) = \sum_{i=1}^M F(\mathbf{b}_i) E_\Omega^{\sigma, \delta}(F_i), $$
and we conclude that the set in (\ref{eq dual P-basis is basis}) is a left basis of $ {\rm Im}(E^{\sigma, \delta}_{\Omega}) $.
 
In particular, $ \dim( {\rm Im} ( E^{\sigma, \delta}_{\Omega} ) ) = M $ and the result follows for finite P-bases, since $ \dim ( {\rm Im} ( E^{\sigma, \delta}_{\Omega} ) ) $ is independent of the choice of finite P-basis (since $ \mathbb{F} $ is a division ring).

Finally, if there exists an infinite P-basis $ \mathcal{B}^{\prime} $ of $ \Omega $, we may take a P-independent subset $ \mathcal{C} \subseteq \mathcal{B}^{\prime} $ of size $ M+1 $, define $ \Psi = \overline{\mathcal{C}} \subseteq \Omega $ and we would have that $ \dim( {\rm Im} ( E^{\sigma, \delta}_{\Psi} ) ) = M+1 $, and the canonical projection map
$$ \pi_\Psi : {\rm Im} ( E^{\sigma, \delta}_{\Omega} ) \longrightarrow {\rm Im} ( E^{\sigma, \delta}_{\Psi} ) $$
is onto. This is absurd (since $ \mathbb{F} $ is a division ring)  and the result follows. 
\end{proof}

We conclude with the following natural definition, which is motivated by the previous corollary. It is an extension of the case $ n = 1 $ given in \cite{lam, lam-leroy}.

\begin{definition} [\textbf{Rank of P-closed sets}]
Given a finitely generated P-closed set $ \Omega \subseteq \mathbb{F}^n $, we define its rank, denoted by $ {\rm Rk}(\Omega) $, as the size of any of its P-bases.
\end{definition}

\begin{remark}
If $ \Omega $ is a finitely generated P-closed set, then $ {\rm Rk}(\Omega) $ coincides with the rank of the matroid $ (\mathcal{P}(\Omega), \mathcal{I}_\Omega) $ from Proposition \ref{prop matroid} (see \cite[Sec. 1.3]{oxley}). Note that we make use of Corollary \ref{corollary all P-bases same size} to prove later in Lemma \ref{lemma adding one to P-independent} that $ (\mathcal{P}(\Omega), \mathcal{I}_\Omega) $ is indeed a matroid.
\end{remark}

\section{The image and kernel of the evaluation map} \label{sec image and ker}

In this section, we describe the left vector space of skew polynomial functions and, to that end, we obtain the dimensions and some left bases of the image and kernel of the evaluation map (Theorem \ref{th describing evaluation as vector space}). As a conclusion to the section, we also deduce a finite-dimensional left vector space description of quotients of a skew polynomial ring, which includes the minimal skew polynomial ring when $ \mathbb{F} $ is a finite field.

We need the following auxiliary lemmas. The first can be seen as a refinement of Item 1 in Theorem \ref{th lagrange interpolation}:

\begin{lemma} \label{lemma for skew vandermonde}
Let $ \Omega \subseteq \mathbb{F}^n $ be a finitely generated P-closed set and let $ \mathcal{G} \subseteq \Omega $. It holds that $ \Omega = \overline{\mathcal{G}} $ if, and only if,
$$ \dim( {\rm Im} ( E^{\sigma, \delta}_{\Omega} )) = \dim( {\rm Im} ( E^{\sigma, \delta}_{\mathcal{G}} )). $$
\end{lemma}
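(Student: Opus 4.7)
The plan is to package the two directions into a single statement about a natural restriction map between the two images, and then exploit finite-dimensionality.

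Since $\mathcal{G} \subseteq \Omega$, restriction of functions from $\Omega$ to $\mathcal{G}$ induces a well-defined left linear map
$$ \pi : {\rm Im}(E^{\sigma,\delta}_{\Omega}) \longrightarrow {\rm Im}(E^{\sigma,\delta}_{\mathcal{G}}), $$
which sends $E^{\sigma,\delta}_{\Omega}(F)$ to $E^{\sigma,\delta}_{\mathcal{G}}(F)$. By construction $\pi$ is surjective. Moreover, by the hypothesis that $\Omega$ is finitely generated, it admits a finite P-basis by Corollary \ref{corollary finite basis}, so by (the proof of) Corollary \ref{corollary all P-bases same size} the space $ {\rm Im}(E^{\sigma,\delta}_{\Omega}) $ is finite-dimensional, and hence so is its quotient $ {\rm Im}(E^{\sigma,\delta}_{\mathcal{G}}) $. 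Under this finite-dimensionality, equality of dimensions is equivalent to injectivity of $\pi$.

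Next I would translate injectivity of $\pi$ into a statement about P-closures. Injectivity is equivalent to: if $F \in \mathbb{F}[\mathbf{x};\sigma,\delta]$ satisfies $E^{\sigma,\delta}_{\mathcal{G}}(F) = 0$, then $E^{\sigma,\delta}_{\Omega}(F) = 0$. In other words, $I(\mathcal{G}) \subseteq I(\Omega)$. Using Proposition \ref{prop properties of zeros} (the inclusion $\mathcal{G} \subseteq Z(F)$ gives $I(Z(F)) \subseteq I(\mathcal{G})$, and $Z(F)$ is P-closed), this is precisely the condition $\Omega \subseteq \overline{\mathcal{G}}$. Since $\mathcal{G} \subseteq \Omega$ and $\Omega$ is P-closed force $\overline{\mathcal{G}} \subseteq \Omega$, we conclude that $\pi$ is injective if, and only if, $\overline{\mathcal{G}} = \Omega$.

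Combining the two equivalences proves the lemma: $\overline{\mathcal{G}} = \Omega$ $\iff$ $\pi$ is injective $\iff$ the two images have the same (finite) left $\mathbb{F}$-dimension. There is no real obstacle here beyond checking that the restriction map $\pi$ is well-defined (which is immediate from $\mathcal{G} \subseteq \Omega$) and that the relevant dimensions are finite (which follows from the finite-generation hypothesis via the results of Section \ref{sec lagrange interpolation}).
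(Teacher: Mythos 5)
Your proof is correct and takes essentially the same approach as the paper: both rely on the restriction map $\pi : {\rm Im}(E^{\sigma,\delta}_{\Omega}) \to {\rm Im}(E^{\sigma,\delta}_{\mathcal{G}})$, its surjectivity, finite-dimensionality via Corollary \ref{corollary all P-bases same size}, and the translation between injectivity of $\pi$, the ideal equality $I(\mathcal{G}) = I(\Omega)$, and $\overline{\mathcal{G}} = \Omega$. The only cosmetic difference is that you package the two directions into one chain of equivalences, whereas the paper treats the direct implication as a corollary of Theorem \ref{th lagrange interpolation}, Item 1, and argues the converse separately.
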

\begin{proof}
First, recall that the given dimensions are finite due to the proof of Corollary \ref{corollary all P-bases same size}. The direct implication is in essence Item 1 in Theorem \ref{th lagrange interpolation}. For the reversed implication, the equality on dimensions implies that the projection map $ {\rm Im} ( E^{\sigma, \delta}_{\Omega} ) \longrightarrow {\rm Im} ( E^{\sigma, \delta}_{\mathcal{G}} ) $ is a left vector space isomorphism. Thus $ I(\mathcal{G}) = I(\Omega) $, which implies that
$$ \overline{\mathcal{G}} = Z(I(\mathcal{G})) = Z(I(\Omega)) = \Omega. $$
\end{proof}

The next lemma is a further refinement of Proposition \ref{prop characterizations P-bases}:

\begin{lemma} \label{lemma adding one to P-independent}
If $ \mathcal{B} \subseteq \mathbb{F}^n $ is finite and P-independent, and $ \mathbf{a} \in \mathbb{F}^n \setminus \overline{\mathcal{B}} $, then $ \mathcal{B}^\prime = \mathcal{B} \cup \{ \mathbf{a} \} $ is P-independent. 

As a consequence, a finite subset $ \mathcal{B} \subseteq \mathbb{F}^n $ is a P-basis of a finitely generated P-closed set $ \mathcal{B} \subseteq \Omega \subseteq \mathbb{F}^n $ if, and only if, the following property holds: $ \mathcal{B} $ is P-independent, and if $ \mathcal{B} \subseteq \mathcal{G} \subseteq \Omega $ and $ \mathcal{G} $ is P-independent, then $ \mathcal{G} = \mathcal{B} $. That is, $ \mathcal{B} $ is a maximal P-independent set in $ \Omega $.
\end{lemma}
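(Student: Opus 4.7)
The plan is to reduce the P-independence statement to a dimension count using Corollary~\ref{corollary all P-bases same size} and Lemma~\ref{lemma for skew vandermonde}. The key auxiliary fact I would establish first is that, for a finite set $ \mathcal{C} \subseteq \mathbb{F}^n $, the set $ \mathcal{C} $ is P-independent if and only if $ \dim({\rm Im}(E^{\sigma,\delta}_{\overline{\mathcal{C}}})) = \#\mathcal{C} $. The forward direction is immediate from Corollary~\ref{corollary all P-bases same size} applied to $ \mathcal{C} $, which is a P-basis of $ \overline{\mathcal{C}} $. The converse is a short contradiction: if some $ \mathbf{c} \in \mathcal{C} $ lay in $ \overline{\mathcal{C} \setminus \{\mathbf{c}\}} $, then $ \overline{\mathcal{C} \setminus \{\mathbf{c}\}} = \overline{\mathcal{C}} $ and Lemma~\ref{lemma for skew vandermonde} would yield $ \dim({\rm Im}(E^{\sigma,\delta}_{\mathcal{C} \setminus \{\mathbf{c}\}})) = \dim({\rm Im}(E^{\sigma,\delta}_{\overline{\mathcal{C}}})) = \#\mathcal{C} $, contradicting the trivial bound $ \dim({\rm Im}(E^{\sigma,\delta}_{\mathcal{C} \setminus \{\mathbf{c}\}})) \leq \#\mathcal{C} - 1 $.

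With this characterization in hand, for the augmentation claim I would set $ M = \#\mathcal{B} $, $ \Omega = \overline{\mathcal{B}} $, $ \Omega' = \overline{\mathcal{B}'} $, and aim to show $ \dim({\rm Im}(E^{\sigma,\delta}_{\Omega'})) = M+1 $. For the lower bound, apply Lemma~\ref{lemma for skew vandermonde} with the lemma's ambient set equal to $ \Omega' $ and $ \mathcal{G} = \Omega \subseteq \Omega' $: since $ \Omega $ is already P-closed and $ \mathbf{a} \in \Omega' \setminus \Omega $, we have $ \overline{\Omega} = \Omega \neq \Omega' $, which forces $ \dim({\rm Im}(E^{\sigma,\delta}_\Omega)) < \dim({\rm Im}(E^{\sigma,\delta}_{\Omega'})) $; since $ \dim({\rm Im}(E^{\sigma,\delta}_\Omega)) = M $ by Corollary~\ref{corollary all P-bases same size}, this gives $ \dim({\rm Im}(E^{\sigma,\delta}_{\Omega'})) \geq M+1 $. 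For the upper bound, apply Lemma~\ref{lemma for skew vandermonde} with $ \mathcal{G} = \mathcal{B}' $: since $ \overline{\mathcal{B}'} = \Omega' $, I get $ \dim({\rm Im}(E^{\sigma,\delta}_{\Omega'})) = \dim({\rm Im}(E^{\sigma,\delta}_{\mathcal{B}'})) \leq \#\mathcal{B}' = M+1 $. Equality follows, and the characterization above then identifies $ \mathcal{B}' $ as P-independent.

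For the ``as a consequence'' statement, both directions of the maximality characterization are then short. If $ \mathcal{B} $ is a P-basis of $ \Omega $ and $ \mathcal{B} \subsetneq \mathcal{G} \subseteq \Omega $ with $ \mathcal{G} $ P-independent, any $ \mathbf{a} \in \mathcal{G} \setminus \mathcal{B} $ would satisfy $ \mathbf{a} \in \Omega = \overline{\mathcal{B}} \subseteq \overline{\mathcal{G} \setminus \{\mathbf{a}\}} $, contradicting P-independence of $ \mathcal{G} $; hence $ \mathcal{B} $ is maximal P-independent in $ \Omega $. Conversely, if $ \mathcal{B} $ is maximal P-independent in $ \Omega $ but $ \overline{\mathcal{B}} \subsetneq \Omega $, picking any $ \mathbf{a} \in \Omega \setminus \overline{\mathcal{B}} $ and applying the augmentation statement just proved produces a strictly larger P-independent subset of $ \Omega $, contradicting maximality.

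The main obstacle I anticipate is bookkeeping rather than a substantive difficulty: Lemma~\ref{lemma for skew vandermonde} requires its ambient P-closed set to be finitely generated, so one must verify this at each invocation (it holds for $ \Omega $ via $ \mathcal{B} $ and for $ \Omega' $ via $ \mathcal{B}' $, both finite), and one must also ensure the dimensions in question are finite a priori (which they are, again by Corollary~\ref{corollary all P-bases same size} and Corollary~\ref{corollary finite basis}). A more direct combinatorial attempt that tries to show each $ \mathbf{c} \in \mathcal{B} $ remains P-independent from $ (\mathcal{B} \setminus \{\mathbf{c}\}) \cup \{\mathbf{a}\} $ quickly becomes circular, which is why routing through the finite dimension of $ {\rm Im}(E^{\sigma,\delta}_{\Omega'}) $ is the natural approach.
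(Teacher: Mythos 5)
Your proof is correct, and it follows the same dimension-counting strategy the paper uses, routed slightly differently. Where the paper obtains the lower bound $\dim({\rm Im}(E^{\sigma,\delta}_{\mathcal{B}'})) \geq \dim({\rm Im}(E^{\sigma,\delta}_{\mathcal{B}})) + 1$ by directly exhibiting a witness in $I(\mathcal{B}) \setminus I(\mathcal{B}')$ (invoking the argument of Lemma~\ref{lemma previous to lagrange}), you instead derive the strict jump by combining the iff of Lemma~\ref{lemma for skew vandermonde} (applied to $\mathcal{G} = \Omega$ inside $\Omega'$, noting $\overline{\Omega} = \Omega \neq \Omega'$) with the monotonicity of image dimensions under projection $\mathbb{F}^{\Omega'} \to \mathbb{F}^{\Omega}$. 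The two routes are logically equivalent here, since Lemma~\ref{lemma for skew vandermonde} itself encodes the witness-polynomial argument; the paper's version is more constructive, yours is more structural. Your auxiliary characterization (``$\mathcal{C}$ is P-independent iff $\dim({\rm Im}(E^{\sigma,\delta}_{\overline{\mathcal{C}}})) = \#\mathcal{C}$'') is a clean encapsulation of what the paper uses implicitly via the proof of Corollary~\ref{corollary all P-bases same size}, and your explicit treatment of the ``as a consequence'' maximality statement, which the paper leaves to the reader, is a small but welcome addition. One tiny point worth spelling out: when you say $\overline{\Omega} = \Omega \neq \Omega'$ ``forces'' the strict dimension inequality, the lemma by itself only gives $\neq$; you are implicitly also using that restriction from $\Omega'$ to $\Omega \subseteq \Omega'$ can only shrink the image, which is the trivial but necessary companion observation.
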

\begin{proof}
Since $ \mathbf{a} \notin \overline{\mathcal{B}} $, it holds that $ I(\mathcal{B}) \setminus I(\mathcal{B}^\prime) \neq \varnothing $, as in the proof of Lemma \ref{lemma previous to lagrange}. Thus 
$$ \dim({\rm Im}(E^{\sigma, \delta}_{\mathcal{B}^\prime})) \geq \dim({\rm Im}(E^{\sigma, \delta}_{\mathcal{B}})) + 1. $$
By the previous lemma and the proof of Corollary \ref{corollary all P-bases same size}, we conclude that $ {\rm Rk}(\overline{\mathcal{B}^\prime}) = {\rm Rk}(\overline{\mathcal{B}}) + 1 $. Again by Corollary \ref{corollary all P-bases same size} and its proof, we conclude that $ \mathcal{B}^\prime $ is a P-basis of $ \overline{\mathcal{B}^\prime} $ and, in particular, it is P-independent.
\end{proof}

Before giving the main result of this section, we need another consequence of Theorem \ref{th lagrange interpolation}, which will allow us to define the concepts of complementary P-closed sets and complementary P-bases:

\begin{corollary}
Let $ \Psi \subseteq \Omega \subseteq \mathbb{F}^n $ be P-closed sets. If $ \Omega $ is finitely generated, then so is $ \Psi $. Moreover, for any finite P-basis $ \mathcal{B} $ of $ \Psi $, there exists a finite P-independent set $ \mathcal{C} \subseteq \Omega $ such that $ \mathcal{B} \cap \mathcal{C} = \varnothing $ and $ \mathcal{B} \cup \mathcal{C} $ is a P-basis of $ \Omega $. In particular, if $ \Phi = \overline{\mathcal{C}} $, then
$$ {\rm Rk}(\Omega) = {\rm Rk}(\Psi) + {\rm Rk}(\Phi). $$
\end{corollary}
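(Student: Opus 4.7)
The plan is to construct, by a two-stage greedy procedure, first a finite P-basis of $\Psi$ and then an extension of it to a P-basis of $\Omega$, reading off the rank identity from cardinalities. The central observation driving everything is that every P-independent subset of $\Omega$ has at most $M := {\rm Rk}(\Omega)$ elements. Indeed, subsets of P-independent sets are P-independent (by monotonicity of the P-closure, which gives $\overline{\mathcal{S}^\prime \setminus \{\mathbf{a}\}} \subseteq \overline{\mathcal{S} \setminus \{\mathbf{a}\}}$ whenever $\mathcal{S}^\prime \subseteq \mathcal{S}$), so any finite subset $\mathcal{I}_0$ of size $k$ inside a P-independent $\mathcal{I} \subseteq \Omega$ is itself a P-basis of $\overline{\mathcal{I}_0} \subseteq \Omega$. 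The canonical surjection $\mathrm{Im}(E^{\sigma,\delta}_\Omega) \longrightarrow \mathrm{Im}(E^{\sigma,\delta}_{\overline{\mathcal{I}_0}})$, together with the dimension computation from the proof of Corollary \ref{corollary all P-bases same size}, then forces $k \leq M$.

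To show $\Psi$ is finitely generated, I would build a P-basis greedily: set $\mathcal{B}_0 = \varnothing$ and, while $\overline{\mathcal{B}_i} \subsetneqq \Psi$, choose any $\mathbf{b}_{i+1} \in \Psi \setminus \overline{\mathcal{B}_i}$ and let $\mathcal{B}_{i+1} = \mathcal{B}_i \cup \{\mathbf{b}_{i+1}\}$. Each $\mathcal{B}_i$ is P-independent by induction and Lemma \ref{lemma adding one to P-independent}, and since $\mathcal{B}_i \subseteq \Psi \subseteq \Omega$ the preceding bound forces $|\mathcal{B}_i| \leq M$, so the process must terminate at some $\mathcal{B} = \mathcal{B}_k$ with $\overline{\mathcal{B}} = \Psi$. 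By construction $\mathcal{B}$ is a finite P-basis of $\Psi$.

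Given now any finite P-basis $\mathcal{B}$ of $\Psi$, I run the same greedy procedure inside $\Omega$ starting from $\mathcal{B}$: with $\mathcal{C}_0 = \varnothing$, while $\overline{\mathcal{B} \cup \mathcal{C}_j} \subsetneqq \Omega$, pick $\mathbf{c}_{j+1} \in \Omega \setminus \overline{\mathcal{B} \cup \mathcal{C}_j}$ and set $\mathcal{C}_{j+1} = \mathcal{C}_j \cup \{\mathbf{c}_{j+1}\}$. Iterated application of Lemma \ref{lemma adding one to P-independent} keeps $\mathcal{B} \cup \mathcal{C}_j$ P-independent, the size bound forces termination at some $\mathcal{C} = \{\mathbf{c}_1, \ldots, \mathbf{c}_N\}$, and since $\mathbf{c}_{j+1} \notin \overline{\mathcal{B} \cup \mathcal{C}_j} \supseteq \mathcal{B} \cup \mathcal{C}_j$ the $N$ elements of $\mathcal{C}$ are pairwise distinct and all lie outside $\mathcal{B}$. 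Hence $\mathcal{B} \cup \mathcal{C}$ is a (disjoint) P-basis of $\Omega$, and $\mathcal{C}$, being a subset of a P-independent set, is a P-basis of $\Phi := \overline{\mathcal{C}}$. The rank identity then follows from disjointness and Corollary \ref{corollary all P-bases same size}:
$$ {\rm Rk}(\Omega) = |\mathcal{B} \cup \mathcal{C}| = |\mathcal{B}| + |\mathcal{C}| = {\rm Rk}(\Psi) + {\rm Rk}(\Phi). $$
The main (minor) obstacle is really just the size bound on P-independent subsets of $\Omega$ recorded at the start; once that is in hand, the rest reduces to straightforward iteration of Lemma \ref{lemma adding one to P-independent} and basic counting.
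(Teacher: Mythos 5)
Your proof is correct and follows essentially the same route as the paper: both rely on iterating Lemma \ref{lemma adding one to P-independent} to grow P-independent sets, with termination guaranteed by the size bound on P-independent subsets of $\Omega$ (which the paper invokes via "the same argument as in the proof of Corollary \ref{corollary all P-bases same size}" and you spell out explicitly at the outset). The only stylistic difference is that you prove finite generation of $\Psi$ constructively via the greedy procedure rather than by the contradiction phrasing the paper uses; the substance is the same.
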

\begin{proof}
Assume that $ \Psi $ is not finitely generated. Using Lemma \ref{lemma adding one to P-independent}, we may construct iteratively a P-independent set $ \mathcal{D} \subseteq \Psi $ of size $ {\rm Rk}(\Omega) + 1 $. This is absurd by the same argument as in the proof of Corollary \ref{corollary all P-bases same size}.

Now, we may extend $ \mathcal{B} $ to a maximal P-independent subset of $ \Omega $ by adding iteratively to it elements $ \mathbf{c}_1, \mathbf{c}_2, \ldots, \mathbf{c}_N \in \Omega $, again by Lemma \ref{lemma adding one to P-independent}, which would be a P-basis of $ \Omega $ by maximality (again by Lemma \ref{lemma adding one to P-independent}). By defining $ \mathcal{C} = \{ \mathbf{c}_1, \mathbf{c}_2, \ldots, \mathbf{c}_N \} $, the rest of the claims in the corollary follow.
\end{proof}

\begin{definition} [\textbf{Complementary P-closed sets and P-bases}]
If $ \Psi \subseteq \Omega \subseteq \mathbb{F}^n $ are finitely generated P-closed sets and $ \mathcal{B} $ and $ \mathcal{C} $ are as in the previous Corollary, then we say that $ \Phi = \overline{\mathcal{C}} \subseteq \Omega $ is a complementary P-closed set of $ \Psi $ in $ \Omega $, and $ \mathcal{C} $ is a complementary P-basis of $ \mathcal{B} $ in $ \Omega $.
\end{definition}

We may now state and prove the second main result of the paper, which describes the image and kernel of $ E^{\sigma, \delta}_{\Omega} $ as left vector spaces over $ \mathbb{F} $ with some particular left bases. 

\begin{theorem} \label{th describing evaluation as vector space}
Given a finitely generated P-closed set $ \Omega \subseteq \mathbb{F}^n $ with finite P-basis $ \mathcal{B} $, we have that
\begin{enumerate}
\item
$ {\rm Im} ( E^{\sigma, \delta}_{\Omega} ) $ is a left vector space over $ \mathbb{F} $ of dimension $ M = {\rm Rk}(\Omega) $ with left basis 
$$ E^{\sigma,\delta}_{\Omega}(\mathcal{B}^*) = \left\lbrace E^{\sigma, \delta}_{\Omega}(F_1), E^{\sigma, \delta}_{\Omega}(F_2), \ldots, E^{\sigma, \delta}_{\Omega}(F_M) \right\rbrace , $$
where $ \mathcal{B}^* = \{ F_1, F_2, \ldots, F_M \} $ is a dual P-basis of $ \mathcal{B} $. Observe that, by Corollary \ref{corollary existence dual P-bases}, $ E^{\sigma,\delta}_{\Omega}(\mathcal{B}^*) $ depends only on $ \mathcal{B} $ and not on the choice of the dual P-basis.
\item
If $ \mathbb{F}^n $ is finitely generated as P-closed set, $ \mathcal{C} $ is a complementary P-basis of $ \mathcal{B} $ in $ \mathbb{F}^n $, and $ \mathcal{C}^* = \{ G_1, G_2, \ldots, G_N \} $ is a dual P-basis of $ \mathcal{C} $ that is part of a dual P-basis $ (\mathcal{B} \cup \mathcal{C})^* $ of $ \mathcal{B} \cup \mathcal{C} $, then 
$$ {\rm Ker} \left( E^{\sigma, \delta}_{\Omega} \right) = I(\mathbb{F}^n) \oplus \langle G_1, G_2, \ldots, G_N \rangle, $$
as left vector spaces over $ \mathbb{F} $, and $ G_1, G_2, \ldots, G_N $ are left linearly independent over $ \mathbb{F} $.
\end{enumerate}
\end{theorem}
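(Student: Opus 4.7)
The plan for Part 1 is to simply invoke the computation already carried out inside the proof of Corollary \ref{corollary all P-bases same size}, where it was shown that $ E^{\sigma, \delta}_{\Omega}(\mathcal{B}^*) $ is a left basis of $ {\rm Im}(E^{\sigma, \delta}_{\Omega}) $ for any dual P-basis $ \mathcal{B}^* $ of $ \mathcal{B} $. Independence of the particular dual P-basis chosen is ensured by Corollary \ref{corollary existence dual P-bases}, so nothing new is required.

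For Part 2, my first step is to show the containment $ I(\mathbb{F}^n) + \langle G_1, \ldots, G_N \rangle \subseteq {\rm Ker}(E^{\sigma, \delta}_{\Omega}) $. The inclusion $ I(\mathbb{F}^n) \subseteq {\rm Ker}(E^{\sigma, \delta}_{\Omega}) $ is immediate from $ \Omega \subseteq \mathbb{F}^n $, while for the second summand I would exploit the biorthogonality built into $ (\mathcal{B} \cup \mathcal{C})^* $: by construction $ G_j(\mathbf{b}_i) = 0 $ for every $ i = 1, \ldots, M $, so $ G_j \in I(\mathcal{B}) $, and Item 1 of Theorem \ref{th lagrange interpolation} then promotes this to $ G_j \in I(\overline{\mathcal{B}}) = I(\Omega) $. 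For the reverse inclusion, given $ H \in {\rm Ker}(E^{\sigma, \delta}_{\Omega}) $ I would form the Newton-style correction
\[
H' = H - \sum_{j=1}^N H(\mathbf{c}_j)\, G_j,
\]
and observe that $ H' $ vanishes at every $ \mathbf{b}_i $ (because $ H $ does and each $ G_j(\mathbf{b}_i) = 0 $) and at every $ \mathbf{c}_k $ (because biorthogonality collapses the sum to $ H(\mathbf{c}_k) $). Hence $ H' $ kills the whole P-basis $ \mathcal{B} \cup \mathcal{C} $ of $ \mathbb{F}^n $, so applying Item 1 of Theorem \ref{th lagrange interpolation} once more, this time to $ \mathbb{F}^n $, forces $ H' \in I(\mathbb{F}^n) $, and $ H = H' + \sum_j H(\mathbf{c}_j)\, G_j $ lies in the claimed sum.

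To close, both the directness of the sum and the left linear independence of $ G_1, \ldots, G_N $ follow from the same one-line argument: given any identity $ P + \sum_{j=1}^N \lambda_j G_j = 0 $ with $ P \in I(\mathbb{F}^n) $ and $ \lambda_j \in \mathbb{F} $, evaluating at $ \mathbf{c}_i $ and using $ G_j(\mathbf{c}_i) = \delta_{j,i} $ yields $ \lambda_i = 0 $ for each $ i $, after which $ P = 0 $ as well. I do not foresee any substantive obstacle here; the argument is essentially organized bookkeeping with biorthogonality, and the only point needing a brief verification is that the requested extension of $ \mathcal{C}^* $ to a dual P-basis of $ \mathcal{B} \cup \mathcal{C} $ actually exists, which is granted by Corollary \ref{corollary existence dual P-bases} applied to the P-basis $ \mathcal{B} \cup \mathcal{C} $ of $ \mathbb{F}^n $.
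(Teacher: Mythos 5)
Your proposal is correct and follows essentially the same route as the paper: Part~1 is handled by citing the computation already carried out in the proof of Corollary~\ref{corollary all P-bases same size}, and Part~2 proceeds by forming the Newton-style correction $H - \sum_j H(\mathbf{c}_j) G_j$ and observing it vanishes on the P-basis $\mathcal{B} \cup \mathcal{C}$ of $\mathbb{F}^n$, then invoking Theorem~\ref{th lagrange interpolation}. The only cosmetic difference is that for left linear independence of the $G_j$ and directness of the sum you evaluate pointwise at the $\mathbf{c}_i$ and use biorthogonality $G_j(\mathbf{c}_i) = \delta_{j,i}$, whereas the paper phrases the same fact through the linear independence of the evaluations $E_\Phi^{\sigma,\delta}(G_j)$ over $\Phi = \overline{\mathcal{C}}$ supplied by Item~1; these are the same argument.
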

\begin{proof}
The proof of Item 1 was given in the proof of Corollary \ref{corollary all P-bases same size}. Now we prove Item 2:

First, $ G_1, G_2, \ldots, G_N $ are left linearly independent over $ \mathbb{F} $ by Item 1, since so are their evaluations over $ \Phi = \overline{\mathcal{C}} $. 

Now we show that, if $ F \in I(\mathbb{F}^n) \cap \langle G_1, G_2, \ldots, G_N \rangle $, then $ F = 0 $. To that end, write $ F = \sum_{i=1}^N \lambda_i G_i $, for some $ \lambda_i \in \mathbb{F} $ and all $ i = 1,2, \ldots, N $. Since $ F \in I(\mathbb{F}^n) $, it holds that $ E_\Phi^{\sigma, \delta}(F) = 0 $, and since $ E^{\sigma, \delta}_{\Phi}(G_1), E^{\sigma, \delta}_{\Phi}(G_2), \ldots, E^{\sigma, \delta}_{\Phi}(G_M) $ are left linearly independent by Item 1, we conclude that $ F = 0 $.

Next let $ F \in {\rm Ker} ( E^{\sigma, \delta}_{\Omega} ) $. Then by Theorem \ref{th lagrange interpolation}, it holds that 
$$ F - \sum_{i=1}^N F(\mathbf{c}_i) G_i \in I(\mathbb{F}^n), $$
since this skew polynomial vanishes at $ \mathcal{B} \cup \mathcal{C} $, and this set is a P-basis of $ \mathbb{F}^n $. Hence $ F \in I(\mathbb{F}^n) \oplus \langle G_1, G_2, \ldots, G_N \rangle $. 

Conversely, let $ F \in I(\mathbb{F}^n) \oplus \langle G_1, G_2, \ldots, G_N \rangle $. By the assumptions, we have that $ F(\mathbf{b}) = 0 $, for all $ \mathbf{b} \in \mathcal{B} $. Hence $ F \in {\rm Ker}( E^{\sigma, \delta}_{\Omega} ) $ again by Theorem \ref{th lagrange interpolation}.
\end{proof}

We conclude with the following consequence, which describes the quotient left modules over the ideal associated to a finitely generated P-closed set. Such quotient left modules include the minimal skew polynomial ring if $ \mathbb{F}^n $ is finitely generated, which is the case if $ \mathbb{F} $ is finite.

\begin{corollary}
If $ \{ F_1, F_2, \ldots, F_M \} $ is a dual P-basis of a finitely generated P-closed set $ \Omega \subseteq \mathbb{F}^n $, then
$$ \mathbb{F}[\mathbf{x}; \sigma, \delta] / I(\Omega) \cong \langle F_1, F_2, \ldots, F_M \rangle $$
as left vector spaces, where the isomorphism is given by inverting the projection to the quotient ring. Moreover, $ F_1, F_2, \ldots, F_M  $ are left linearly independent, and hence
$$ \dim \left( \mathbb{F}[\mathbf{x}; \sigma, \delta] / I(\Omega) \right) = {\rm Rk}(\Omega). $$
In particular, the minimal skew polynomial ring $ \mathbb{F}[\mathbf{x}; \sigma, \delta] / I(\mathbb{F}^n) $ is a finite-dimensional left vector space over $ \mathbb{F} $ of dimension $ {\rm Rk}(\mathbb{F}^n) $ if $ \mathbb{F}^n $ is finitely generated.
\end{corollary}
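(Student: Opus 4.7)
The plan is to exploit the first isomorphism theorem applied to the left linear map $E^{\sigma, \delta}_{\Omega}$, whose kernel is exactly $I(\Omega)$ by definition. This gives, as left $\mathbb{F}$-vector spaces, the chain
$$ \mathbb{F}[\mathbf{x}; \sigma, \delta]/I(\Omega) \;\cong\; {\rm Im}(E^{\sigma, \delta}_{\Omega}), $$
and Theorem \ref{th describing evaluation as vector space}, Item 1, already tells us that the right-hand side has left basis $E^{\sigma, \delta}_{\Omega}(F_1), \ldots, E^{\sigma, \delta}_{\Omega}(F_M)$, which immediately gives the dimension count $\dim(\mathbb{F}[\mathbf{x};\sigma,\delta]/I(\Omega)) = M = \mathrm{Rk}(\Omega)$.

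To identify this quotient concretely with $\langle F_1, F_2, \ldots, F_M \rangle$, I would compose the inclusion $\langle F_1, \ldots, F_M \rangle \hookrightarrow \mathbb{F}[\mathbf{x};\sigma,\delta]$ with the projection onto the quotient, and show the resulting map $\varphi$ is a left vector space isomorphism. For injectivity: if $\sum_{i=1}^M \lambda_i F_i \in I(\Omega)$, then evaluating at any $\mathbf{b}_j \in \mathcal{B} \subseteq \Omega$ and using the defining property $F_i(\mathbf{b}_j) = \delta_{i,j}$ of a dual P-basis yields $\lambda_j = 0$. This argument, applied with $I(\Omega)$ replaced by $\{0\}$, also establishes the left linear independence of $F_1, \ldots, F_M$. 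For surjectivity: given any $F \in \mathbb{F}[\mathbf{x};\sigma,\delta]$, set $H = F - \sum_{i=1}^M F(\mathbf{b}_i) F_i$. Then $H$ vanishes on $\mathcal{B}$, so by Theorem \ref{th lagrange interpolation}, Item 1, it vanishes on all of $\Omega$, i.e.\ $H \in I(\Omega)$, so $F + I(\Omega) = \varphi\bigl(\sum_{i=1}^M F(\mathbf{b}_i) F_i\bigr)$.

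The ``in particular'' statement follows by instantiating $\Omega = \mathbb{F}^n$, which is always P-closed (as $\mathbb{F}^n = Z(\{0\})$ by Proposition \ref{prop properties of zeros}, Item 3), and assumed here to be finitely generated; the finiteness of $\mathrm{Rk}(\mathbb{F}^n)$ then yields the finite-dimensionality of $\mathbb{F}[\mathbf{x};\sigma,\delta]/I(\mathbb{F}^n)$. This applies in particular when $\mathbb{F}$ is finite, since $\mathbb{F}^n$ is then itself finite and hence trivially a finite set of P-generators of itself.

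There is no genuine obstacle here: the entire argument is a routine bookkeeping consequence of Theorem \ref{th lagrange interpolation} and Theorem \ref{th describing evaluation as vector space}. The only subtlety worth emphasizing is that two distinct observations are being assembled: (i) the kernel of $E^{\sigma,\delta}_{\Omega}$ is by definition $I(\Omega)$, which is where the quotient description comes from, and (ii) the dual P-basis gives a canonical set-theoretic lift of a basis of $\mathrm{Im}(E^{\sigma,\delta}_{\Omega})$ back into $\mathbb{F}[\mathbf{x};\sigma,\delta]$, which is precisely what realizes the quotient as the concrete left $\mathbb{F}$-subspace $\langle F_1, \ldots, F_M \rangle$.
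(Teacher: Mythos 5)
Your proposal is correct and follows essentially the same route as the paper: both consider the projection from $\langle F_1, \ldots, F_M\rangle$ onto the quotient, establish surjectivity via Item 1 of the Lagrange interpolation theorem, and use the dual P-basis property $F_i(\mathbf{b}_j)=\delta_{i,j}$ to control linear independence. The only cosmetic difference is that you prove injectivity of the projection directly by evaluating at the $\mathbf{b}_j$, while the paper instead deduces it from a dimension count after citing Theorem \ref{th describing evaluation as vector space}; the underlying content is identical.
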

\begin{proof}
Again, it follows directly from Item 1 in Theorem \ref{th describing evaluation as vector space} that $ F_1, F_2, \ldots, F_M $ are left linearly independent over $ \mathbb{F} $, since so are their evaluations over $ \Omega $.

Now consider the left linear projection map $ \rho : \langle F_1, F_2, \ldots, F_M \rangle \longrightarrow \mathbb{F}[\mathbf{x}; \sigma, \delta] / I(\Omega) $. To show that it is onto, it suffices to observe that, given $ F \in \mathbb{F}[\mathbf{x}; \sigma, \delta] $, it holds that
$$ \rho(F) = \rho \left( \sum_{i=1}^M F(\mathbf{b}_i) F_i \right), $$
by Item 1 in Theorem \ref{th lagrange interpolation}, where $ \mathcal{B} = \{ \mathbf{b}_1, \mathbf{b}_2, \ldots, \mathbf{b}_M \} $ is the P-basis of $ \Omega $ associated to $ \{ F_1, F_2, \ldots, $ $ F_M \} $.

Finally, the evaluation map $ \mathbb{F}[\mathbf{x}; \sigma, \delta] / I(\Omega) \longrightarrow {\rm Im} ( E^{\sigma, \delta}_{\Omega} ) $ is a left vector space isomorphism by definition, thus by Item 1 in Theorem \ref{th describing evaluation as vector space}, it holds that
$$ \dim ( \langle F_1, F_2, \ldots, F_M \rangle ) = M = \dim ( {\rm Im} ( E^{\sigma, \delta}_{\Omega} ) ) = \dim (\mathbb{F}[\mathbf{x}; \sigma, \delta] / I(\Omega)). $$ 
Hence $ \rho $ is a left vector space isomorphism, and we are done.
\end{proof}

As shown in Equation (\ref{eq min conventional skew pol ring for finite}), if $ \mathbb{F} $ is finite and has $ q $ elements, then 
$$ \dim(\mathbb{F}[\mathbf{x}] / I(\mathbb{F}^n)) = q^n = {\rm Rk}(\mathbb{F}^n), $$
since $ {\rm Rk}(\mathbb{F}^n) = \# \mathbb{F}^n = q^n $ in the conventional case. Hence the previous corollary extends this well-known result for finite fields.

\section{Skew Vandermonde matrices and how to find P-bases} \label{sec skew vandermonde}

In the univariate case ($ n = 1 $), Vandermonde matrices are a crucial tool to explicitly compute Lagrange interpolating polynomials.   The multivariate case works similarly, although only existence of interpolating skew polynomials may be derived, and not their uniqueness. This is due to the non-square form of multivariate Vandermonde matrices.

In this section, we extend the concept of \textit{skew Vandermonde matrix} from the univariate case in \cite{lam, lam-leroy} to the multivariate case. As applications and thanks to the recursive formula in Theorem \ref{th fundamental functions}, we show how to explicitly compute P-bases, dual P-bases and Lagrange interpolating skew polynomials over finitely generated P-closed sets.

The case $ n = 1 $ in the following definition was given in \cite[Eq. (4.1)]{lam-leroy}, and previously in \cite{lam}:

\begin{definition} [\textbf{Skew Vandermonde matrices}]
Let $ \mathcal{N} \subseteq \mathcal{M} $ be a finite set of monomials and let $ \mathcal{B} = \{ \mathbf{b}_1, \mathbf{b}_2, \ldots, \mathbf{b}_M \} \subseteq \mathbb{F}^n $. We define the corresponding $ (\sigma, \delta) $-skew Vandermonde matrix, denoted by $ V_{\mathcal{N}}^{\sigma, \delta}(\mathcal{B}) $, as the $ ( \# \mathcal{N} ) \times M $ matrix over $ \mathbb{F} $ whose rows are given by
$$ (N_\mathfrak{m}(\mathbf{b}_1), N_\mathfrak{m}(\mathbf{b}_2), \ldots, N_\mathfrak{m}(\mathbf{b}_M)) \in \mathbb{F}^M, $$
for all $ \mathfrak{m} \in \mathcal{N} $ (given certain ordering in $ \mathcal{M} $). If $ d $ is a positive integer, we define $ \mathcal{M}_d $ as the set of monomials of degree less than $ d $, and we denote
$$ V_{d}^{\sigma, \delta}(\mathcal{B}) = V_{\mathcal{M}_d}^{\sigma, \delta}(\mathcal{B}). $$
\end{definition}

An important consequence of Theorem \ref{th lagrange interpolation} is finding the rank and a P-basis of a given finitely generated P-closed set:

\begin{proposition} \label{prop rank vandermonde}
Given a finite set $ \mathcal{G} \subseteq \mathbb{F}^n $ with $ M $ elements, and $ \Omega = \overline{\mathcal{G}} $, it holds that
$$ {\rm Rk} \left( V_{M}^{\sigma, \delta}(\mathcal{G}) \right) = {\rm Rk}(\Omega). $$
Moreover, a subset $ \mathcal{B} \subseteq \mathcal{G} $ is a P-basis of $ \Omega $ if, and only if, $ \# \mathcal{B} = {\rm Rk}(\Omega) = {\rm Rk} ( V_{\# \mathcal{B}}^{\sigma, \delta}(\mathcal{B}) ) $.

Hence applying Gaussian elimination to the matrix $ V_{M}^{\sigma, \delta}(\mathcal{G}) $, we may find the rank of $ \Omega $ and at least one of its P-bases.
\end{proposition}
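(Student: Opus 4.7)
The plan is to identify the skew Vandermonde matrix with the ``matrix of the restricted evaluation map'' on polynomials of bounded degree, so that the claim becomes a statement about the rank of a certain linear map. Concretely, writing a polynomial $F = \sum_{\mathfrak{m} \in \mathcal{M}_M} F_\mathfrak{m} \mathfrak{m}$ of degree less than $M$, left linearity of the evaluation gives $E^{\sigma,\delta}_\mathcal{G}(F) = \sum_\mathfrak{m} F_\mathfrak{m} (N_\mathfrak{m}(\mathbf{b}_1),\ldots,N_\mathfrak{m}(\mathbf{b}_M))$. Hence the left row space of $V_M^{\sigma,\delta}(\mathcal{G})$ is exactly
\[
\mathcal{E}_M := \{ E^{\sigma,\delta}_\mathcal{G}(F) : F \in \mathbb{F}[\mathbf{x};\sigma,\delta],\ \deg(F) < M \} \subseteq \mathrm{Im}(E^{\sigma,\delta}_\mathcal{G}),
\]
so $\mathrm{Rk}(V_M^{\sigma,\delta}(\mathcal{G})) = \dim_\mathbb{F} \mathcal{E}_M$. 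This reduces the first assertion to showing $\mathcal{E}_M = \mathrm{Im}(E^{\sigma,\delta}_\mathcal{G})$ and then invoking Lemma \ref{lemma for skew vandermonde} and Corollary \ref{corollary all P-bases same size}.

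For the equality $\mathcal{E}_M = \mathrm{Im}(E^{\sigma,\delta}_\mathcal{G})$, the key step is to produce a P-basis of $\Omega$ that lies inside $\mathcal{G}$. I would build it by iteratively selecting elements of $\mathcal{G}$: starting from $\varnothing$, at each stage add any element of $\mathcal{G}$ that is P-independent from those already chosen. This terminates with some $\mathcal{B} \subseteq \mathcal{G}$ which is P-independent and maximal inside $\mathcal{G}$, so every $\mathbf{a} \in \mathcal{G}$ belongs to $\overline{\mathcal{B}}$ by Lemma \ref{lemma adding one to P-independent}; hence $\overline{\mathcal{B}} \supseteq \overline{\mathcal{G}} = \Omega \supseteq \overline{\mathcal{B}}$, so $\mathcal{B}$ is a P-basis of $\Omega$ with $\#\mathcal{B} \leq M$. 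Given any $f \in \mathrm{Im}(E^{\sigma,\delta}_\mathcal{G})$, say $f = E^{\sigma,\delta}_\mathcal{G}(F)$, Theorem \ref{th lagrange interpolation}(2) yields $F' \in \mathbb{F}[\mathbf{x};\sigma,\delta]$ with $\deg(F') < \#\mathcal{B} \leq M$ and $F'(\mathbf{b}) = F(\mathbf{b})$ for all $\mathbf{b} \in \mathcal{B}$; Theorem \ref{th lagrange interpolation}(1) then forces $E^{\sigma,\delta}_\Omega(F') = E^{\sigma,\delta}_\Omega(F)$, and restriction gives $E^{\sigma,\delta}_\mathcal{G}(F') = f$, so $f \in \mathcal{E}_M$. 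Combining this with Lemma \ref{lemma for skew vandermonde} (applied to $\mathcal{G} \subseteq \Omega$) and the dimension formula in the proof of Corollary \ref{corollary all P-bases same size} finishes the first claim.

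For the second claim, the forward direction is immediate: if $\mathcal{B}$ is a P-basis of $\Omega$, then $\overline{\mathcal{B}} = \Omega$ and $\#\mathcal{B} = \mathrm{Rk}(\Omega)$ by definition, and applying the first assertion with $\mathcal{G}$ replaced by $\mathcal{B}$ and $M$ by $\#\mathcal{B}$ gives $\mathrm{Rk}(V_{\#\mathcal{B}}^{\sigma,\delta}(\mathcal{B})) = \mathrm{Rk}(\overline{\mathcal{B}}) = \mathrm{Rk}(\Omega)$. For the converse, the first assertion applied to $\mathcal{B}$ gives $\mathrm{Rk}(\overline{\mathcal{B}}) = \mathrm{Rk}(V_{\#\mathcal{B}}^{\sigma,\delta}(\mathcal{B})) = \mathrm{Rk}(\Omega)$; extract a maximal P-independent subset $\mathcal{B}' \subseteq \mathcal{B}$ by the same iterative procedure as above. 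By Lemma \ref{lemma adding one to P-independent}, $\mathcal{B}'$ is a P-basis of $\overline{\mathcal{B}}$, so $\#\mathcal{B}' = \mathrm{Rk}(\overline{\mathcal{B}}) = \#\mathcal{B}$; finiteness forces $\mathcal{B}' = \mathcal{B}$, so $\mathcal{B}$ is P-independent. Finally $\overline{\mathcal{B}} \subseteq \Omega$ with $\mathrm{Rk}(\overline{\mathcal{B}}) = \mathrm{Rk}(\Omega)$ implies $\overline{\mathcal{B}} = \Omega$ (otherwise a P-basis of $\overline{\mathcal{B}}$ could be strictly enlarged inside $\Omega$, contradicting Corollary \ref{corollary all P-bases same size}), so $\mathcal{B}$ is a P-basis of $\Omega$.

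The main obstacle is conceptual rather than computational: one must carefully justify that a P-basis of $\Omega$ may be chosen \emph{inside} the given generating set $\mathcal{G}$, which is exactly what enables the Lagrange-interpolation bound on the degree ($<\#\mathcal{B} \leq M$) and thus places the interpolant in the row span of $V_M^{\sigma,\delta}(\mathcal{G})$. Once this is in place, all remaining steps are straightforward applications of results from Sections \ref{sec lagrange interpolation} and \ref{sec image and ker}.
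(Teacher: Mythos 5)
Your proposal is correct and follows essentially the same route as the paper's: both identify the left row space of $V_M^{\sigma,\delta}(\mathcal{G})$ with evaluations on $\mathcal{G}$ of skew polynomials of degree $< M$, observe via Theorem \ref{th lagrange interpolation} (and the bound ${\rm Rk}(\Omega) \le M$ coming from a P-basis extracted from $\mathcal{G}$) that this already captures all of $\mathrm{Im}(E^{\sigma,\delta}_\mathcal{G})$, and then invoke Lemma \ref{lemma for skew vandermonde} and Corollary \ref{corollary all P-bases same size} to match dimensions. The only cosmetic difference is that you extract the P-basis inside $\mathcal{G}$ explicitly and apply Theorem \ref{th lagrange interpolation} directly, whereas the paper packages the same degree bound through Corollary \ref{corollary existence dual P-bases} and Theorem \ref{th describing evaluation as vector space}.
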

\begin{proof}
First, by Corollary \ref{corollary existence dual P-bases} and Theorem \ref{th describing evaluation as vector space}, it holds that $ {\rm Im} ( E_{\Omega}^{\sigma, \delta} ) $ is the left vector space generated by the evaluations $ (N_\mathfrak{m}(\mathbf{a}))_{\mathbf{a} \in \Omega} \in \mathbb{F}^\Omega $, for $ \mathfrak{m} \in \mathcal{M}_M $. By Lemma \ref{lemma for skew vandermonde}, to calculate $ \dim({\rm Im} ( E_{\Omega}^{\sigma, \delta} )) $, we may restrict such evaluations to points in $ \mathcal{G} $, and the first claim follows.

Now we prove the second claim. If $ \mathcal{B} $ is a P-basis of $ \Omega $, then $ \# \mathcal{B} = {\rm Rk}(\Omega) $ by definition, and $ {\rm Rk}(\Omega) = {\rm Rk} ( V_{\# \mathcal{B}}^{\sigma, \delta}(\mathcal{B}) ) $ by the first claim.

Conversely, if $ \# \mathcal{B} = {\rm Rk}(\Omega) = {\rm Rk} ( V_{\# \mathcal{B}}^{\sigma, \delta}(\mathcal{B}) ) $, then by Theorem \ref{th describing evaluation as vector space}, it holds that
$$ \dim( {\rm Im} ( E^{\sigma, \delta}_{\Omega} )) = {\rm Rk}(\Omega) = {\rm Rk} ( V_{\# \mathcal{B}}^{\sigma, \delta}(\mathcal{B}) ) \leq \dim( {\rm Im} ( E^{\sigma, \delta}_{\mathcal{B}} )). $$
Since the opposite inequality always holds, it follows from Lemma \ref{lemma for skew vandermonde} that $ \overline{\mathcal{B}} = \Omega $. Now, $ \mathcal{B} $ is a minimal set of P-generators of $ \Omega $, since $ \# \mathcal{B} = {\rm Rk}(\Omega) $, all minimal sets of P-generators are P-bases by Proposition \ref{prop characterizations P-bases} and all have the same size by Corollary \ref{corollary all P-bases same size}. Hence we conclude that $ \mathcal{B} $ is a P-basis of $ \Omega $.
\end{proof}

A classical way of stating the Lagrange interpolation theorem is as the invertibility of Vandermonde matrices. This result   is an immediate consequence of Theorem \ref{th lagrange interpolation}:

\begin{corollary}
Let $ \Omega \subseteq \mathbb{F}^n $ be a finitely generated P-closed set with P-basis $ \mathcal{B} = \{ \mathbf{b}_1, \mathbf{b}_2, \ldots, $ $ \mathbf{b}_M \} $. There exists a solution to the linear system
\begin{equation}
(F_\mathfrak{m})_{\mathfrak{m} \in \mathcal{M}_M} V_{M}^{\sigma, \delta}(\mathcal{B}) = (a_1, a_2, \ldots, a_M),
\label{eq system for interpolation}
\end{equation}
for any $ a_1, a_2, \ldots, a_M \in \mathbb{F} $ (that is, $ V_{M}^{\sigma, \delta}(\mathcal{B}) $ is left invertible). For any solution, the corresponding skew polynomial $ F = \sum_{\mathfrak{m} \in \mathcal{M}_M} F_\mathfrak{m} \mathfrak{m} $ satisfies that $ F(\mathbf{b}_i) = a_i $, for $ i = 1,2, \ldots, M $, and $ \deg(F) < M $. 
\end{corollary}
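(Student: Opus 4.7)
The plan is to unfold the matrix equation into a statement about skew polynomial evaluation and then invoke Theorem~\ref{th lagrange interpolation} directly. First, I would expand the matrix product on the left-hand side of (\ref{eq system for interpolation}). By the definition of $V_M^{\sigma,\delta}(\mathcal{B})$, its $(\mathfrak{m},j)$-entry is $N_\mathfrak{m}(\mathbf{b}_j)$, so the $j$-th entry of the row vector $(F_\mathfrak{m})_{\mathfrak{m} \in \mathcal{M}_M} V_M^{\sigma,\delta}(\mathcal{B})$ equals
$$ \sum_{\mathfrak{m} \in \mathcal{M}_M} F_\mathfrak{m} N_\mathfrak{m}(\mathbf{b}_j). $$
By the left linearity of the total evaluation map $E^{\sigma,\delta}$ together with the description of evaluations through the fundamental functions at the beginning of Section~\ref{sec evaluations}, this sum equals $F(\mathbf{b}_j)$, where $F = \sum_{\mathfrak{m} \in \mathcal{M}_M} F_\mathfrak{m} \mathfrak{m}$. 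Note that $\deg(F) < M$ automatically, because only monomials in $\mathcal{M}_M$ are used.

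Thus the linear system (\ref{eq system for interpolation}) is equivalent to asking for coefficients $(F_\mathfrak{m})_{\mathfrak{m} \in \mathcal{M}_M}$ such that the associated skew polynomial $F$ of degree less than $M$ satisfies $F(\mathbf{b}_i) = a_i$ for $i = 1,2,\ldots,M$. Item~2 of Theorem~\ref{th lagrange interpolation} guarantees the existence of exactly such an $F$, which simultaneously yields a solution to (\ref{eq system for interpolation}) and proves the two evaluation/degree assertions about $F$ stated in the corollary.

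To obtain left invertibility of $V_M^{\sigma,\delta}(\mathcal{B})$ as a parenthetical remark, I would apply the preceding argument to each of the $M$ standard basis row vectors $e_1,\ldots,e_M \in \mathbb{F}^{1 \times M}$, obtaining row vectors $\mathbf{x}_1,\ldots,\mathbf{x}_M \in \mathbb{F}^{1 \times \#\mathcal{M}_M}$ with $\mathbf{x}_i V_M^{\sigma,\delta}(\mathcal{B}) = e_i$. Stacking them gives a matrix $A \in \mathbb{F}^{M \times \#\mathcal{M}_M}$ with $A \, V_M^{\sigma,\delta}(\mathcal{B}) = I_M$, which is precisely left invertibility.

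There is no real obstacle here: the content of the statement is exactly Theorem~\ref{th lagrange interpolation} rephrased in matrix form, and the only care needed is to verify that multiplication of the coefficient row vector by the Vandermonde matrix produces the evaluation row vector $(F(\mathbf{b}_1),\ldots,F(\mathbf{b}_M))$, which is immediate from the definition of the fundamental functions $N_\mathfrak{m}$.
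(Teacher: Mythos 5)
Your proof is correct and follows exactly the approach the paper implies: the paper states that the corollary ``is an immediate consequence of Theorem~\ref{th lagrange interpolation}'' without elaborating, and your unfolding of the matrix product via the fundamental functions $N_\mathfrak{m}$, followed by an application of Item~2 of Theorem~\ref{th lagrange interpolation}, is precisely the intended argument. The only technicality worth noting is that Item~2 produces $F$ with $\deg(F) < M$, which by the definition of $\mathcal{M}_M$ means $F$ is supported on $\mathcal{M}_M$, so the coefficient vector $(F_\mathfrak{m})_{\mathfrak{m}\in\mathcal{M}_M}$ is indeed a solution to the system; you handle this correctly.
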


Another important immediate consequence is the following:

\begin{corollary}
Given a P-basis $ \mathcal{B} $, with $ M $ elements, of a P-closed set, one can obtain a dual P-basis of $ \mathcal{B} $, consisting of skew polynomials of degree less than $ M $, by solving $ M $ systems of $ M $ linear equations whose coefficients are taken from left linearly independent rows in $ V_{M}^{\sigma, \delta}(\mathcal{B}) $.
\end{corollary}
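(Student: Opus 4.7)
The plan is to convert the existence of a dual P-basis of degree less than $M$ (already guaranteed by Corollary \ref{corollary existence dual P-bases}) into an explicit construction via linear algebra over $\mathbb{F}$, using the left invertibility of the Vandermonde matrix established in the preceding corollary.

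First, I would fix $i \in \{1, 2, \ldots, M\}$ and look for a skew polynomial $F_i \in \mathbb{F}[\mathbf{x};\sigma,\delta]$ of degree less than $M$ with $F_i(\mathbf{b}_j) = \delta_{i,j}$ for all $j$. Writing $F_i = \sum_{\mathfrak{m} \in \mathcal{M}_M} (F_i)_\mathfrak{m} \mathfrak{m}$ and using the definition of the fundamental functions, the dual-basis conditions translate exactly to the linear system $(F_i)_{\mathfrak{m} \in \mathcal{M}_M} \cdot V_{M}^{\sigma,\delta}(\mathcal{B}) = \mathbf{e}_i^T$, where $\mathbf{e}_i$ denotes the $i$th standard vector in $\mathbb{F}^M$. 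This is the system in the previous corollary with right-hand side $\mathbf{e}_i$, so a solution exists; but what we want is a solution built from an invertible $M \times M$ subsystem.

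Next, I would invoke Proposition \ref{prop rank vandermonde} to conclude that $\mathrm{Rk}(V_{M}^{\sigma,\delta}(\mathcal{B})) = \mathrm{Rk}(\overline{\mathcal{B}}) = M$. Since the matrix has exactly $M$ columns and $\mathbb{F}$ is a division ring, this full column rank is equivalent to the existence of a subset $\mathcal{N} \subseteq \mathcal{M}_M$ of $M$ monomials whose corresponding rows of $V_{M}^{\sigma,\delta}(\mathcal{B})$ are left linearly independent. These $M$ rows form an $M \times M$ submatrix $V_{\mathcal{N}}^{\sigma,\delta}(\mathcal{B})$, which, having rank $M$ over a division ring, is invertible. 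Setting $(F_i)_\mathfrak{m} = 0$ for $\mathfrak{m} \in \mathcal{M}_M \setminus \mathcal{N}$ reduces the original system to the square system $(F_i)_{\mathfrak{m} \in \mathcal{N}} \cdot V_{\mathcal{N}}^{\sigma,\delta}(\mathcal{B}) = \mathbf{e}_i^T$, which has a unique solution obtainable by any standard method (e.g.\ Gaussian elimination, since the leading coefficients of the equations describing invertibility over a division ring pose no obstruction).

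Finally, I would observe that running this procedure for $i = 1, 2, \ldots, M$ yields $M$ skew polynomials $F_1, F_2, \ldots, F_M$, each of degree less than $M$ (since only monomials in $\mathcal{N} \subseteq \mathcal{M}_M$ appear) and each satisfying $F_i(\mathbf{b}_j) = \delta_{i,j}$, which is precisely the definition of a dual P-basis of $\mathcal{B}$. No genuine obstacle arises; the only point requiring care is the passage from ``rank $M$'' to ``$M$ left linearly independent rows producing an invertible square submatrix'', but this is standard linear algebra over a division ring and exactly what the preceding corollary's left invertibility of $V_{M}^{\sigma,\delta}(\mathcal{B})$ already delivers.
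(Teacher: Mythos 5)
Your proof is correct and follows the approach the paper leaves implicit (the corollary is stated as an immediate consequence of the preceding corollary and Proposition \ref{prop rank vandermonde}): you reduce the dual-P-basis conditions to the linear systems $(F_i)_{\mathfrak{m}} V_M^{\sigma,\delta}(\mathcal{B}) = \mathbf{e}_i^T$, use the full column rank $M$ of the skew Vandermonde matrix to extract an invertible $M \times M$ submatrix from $M$ left linearly independent rows, and solve the $M$ resulting square systems. This is exactly the construction the statement describes, and the passage from left invertibility to an invertible square submatrix over a division ring is indeed standard, as you note.
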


In conclusion, to find a P-basis of a P-closed set $ \Omega \subseteq \mathbb{F}^n $ with $ M = {\rm Rk}(\Omega) $ and generated by a finite set $ \mathcal{G} $, we need to find $ M $ linearly independent columns in $ V_{M}^{\sigma, \delta}(\mathcal{G}) $. Using Gaussian elimination, such method has exponential complexity in $ M $ if $ n > 1 $, since the number of rows in $ V_M^{\sigma, \delta}(\mathcal{G}) $ is $ \# \mathcal{M}_M $, which is exponential in $ M $.

Fortunately, if we are given or have precomputed a P-basis of $ \Omega $, we may find Lagrange interpolating skew polynomials over $ \Omega $ with complexity $ \mathcal{O}(M^3) $, and find a dual P-basis with complexity $ \mathcal{O}(M^4) $.

\section{Conclusion and open problems}

In this paper, we have introduced free multivariate skew polynomials with coefficients over rings, although we have focused on division rings. We have given a natural definition of evaluation (Definition \ref{def standard evaluation}), which extends the univariate case studied in \cite{lam, algebraic-conjugacy, lam-leroy}, and we have obtained a product rule (Theorem \ref{th product rule}). With these notions and assumptions, we were able to define general nonfree multivariate skew polynomial rings (Definition \ref{def skew polynomial rings}), where evaluation is still natural. We have described (by giving dimensions and left bases) in Theorem \ref{th describing evaluation as vector space} the left vector spaces of functions defined by multivariate skew polynomials, when defined over a finitely generated P-closed set (set of zeros). This has been done thanks to a Lagrange-type interpolation theorem (Theorem \ref{th lagrange interpolation}). The following problems are left open:
\begin{enumerate}
\item
Find explicit descriptions of general multivariate skew polynomial rings. In other words, find explicit descriptions of matrix morphisms, vector derivations and two-sided ideals contained in $ I(\mathbb{F}^n) $. 
\item
The previous item is particularly interesting in the case of finite fields, where the minimal skew polynomial ring is generated by a finite collection of skew polynomials. 
\item
Although we have given computational methods to find ranks, P-bases, dual P-bases and Lagrange interpolating skew polynomials, it would be interesting to obtain explicit formulas for such objects. Algorithms for finding P-bases with polynomial complexity are also interesting, as well as reducing the complexity of finding Lagrange interpolating skew polynomials.
\item
Investigate how to perform Euclidean-type divisions over multivariate skew polynomial rings, which would extend Lemma \ref{th euclidean division}. A notion of Gr{\"o}bner basis may be possible and useful in this context.
\end{enumerate}

\section*{Acknowledgement}

The authors wish to thank the anonymous reviewer for their very helpful comments. The first author gratefully acknowledges the support from The Independent Research Fund Denmark (Grant No. DFF-4002-00367, Grant No. DFF-5137-00076B ``EliteForsk-Rejsestipendium'', and Grant No. DFF-7027-00053B).

\small


\end{document}